\documentclass[11pt]{amsart}
\usepackage{amsmath,amssymb,amsthm}
\usepackage[margin=1.15in]{geometry}
\usepackage[colorlinks=true,linkcolor=blue,citecolor=blue,urlcolor=blue]{hyperref}

\newtheorem{theorem}{Theorem}[section]
\newtheorem{lemma}[theorem]{Lemma}
\newtheorem{corollary}[theorem]{Corollary}
\theoremstyle{definition}
\newtheorem{definition}[theorem]{Definition}
\newtheorem{example}[theorem]{Example}
\theoremstyle{remark}
\newtheorem{remark}[theorem]{Remark}

\numberwithin{equation}{section}
\newcommand{\N}{\mathbb{N}}
\newcommand{\R}{\mathbb{R}}
\newcommand{\C}{\mathbb{C}}
\newcommand{\Z}{\mathbb{Z}}
\newcommand{\cP}{\mathcal{P}}
\newcommand{\cD}{\mathcal{D}}
\newcommand{\cA}{\mathcal{A}}
\newcommand{\sm}{\operatorname{sm}}
\renewcommand{\lg}{\operatorname{lg}}
\newcommand{\muP}{\mu_{\mathcal{P}}}
\newcommand{\eps}{\varepsilon}
\newcommand{\dd}{\,\mathrm{d}}
\newcommand{\norm}[1]{\lVert #1\rVert_\infty}

\begin{document}

\title[Partition-theoretic density formulas]{Partition-theoretic density formulas: radial Tauberian theory and non-radial instability}

\author{Mohammad F. Marashdeh}
\address{Department of Mathematics, Mutah University, Karak, Jordan}
\email{marashdeh@mutah.edu.jo}

\subjclass[2020]{11P82, 11N37, 40E05, 11B05, 11R45, 33B30}
\keywords{Partitions, arithmetic density, Alladi duality, Tauberian theorems, Wiener's theorem, Gumbel distribution, Chebotarev density theorem, dilogarithm, Clausen function}

\begin{abstract}
Ono, Schneider and Wagner found a partition-theoretic analogue of Alladi's duality formula: for many sets $S\subseteq\N$, the series $-\sum\mu_{\mathcal P}(\lambda)q^{|\lambda|}$ over partitions into distinct parts with smallest part in $S$ tends to the density of $S$ as $q\to1$. We determine when this happens. Radially, the summation kernel is a Gumbel density, and Wiener's theorem shows that for bounded $f$ the associated series tends to $L$ if and only if $f$ has mean $L$ on every interval $(x,x+cx/\log x]$; natural density does not suffice. Multiplied by $\log(1/(1-q))$, the series tends to $\delta$ if and only if $S$ has $(c\delta+o(1))x/(\log x)^2$ elements in each such interval; this gives a partition analogue of Alladi's theorem for Chebotarev sets of primes. Off the real axis the largest weights grow like $e^{\Psi_\beta/t}$ with $\Psi_\beta>0$. Along $q=e^{-t(1+i\beta)}$, the formula for residue classes modulo $m\ge3$ holds exactly when $|\beta|<\beta_m$, an explicit Clausen-function threshold with $\beta_m\to0$, and it fails almost surely for random sets of bounded discrepancy. These results correct several published claims.
\end{abstract}

\maketitle

\section{Introduction}

For an integer $n\ge2$ let $p_{\min}(n)$ denote its least prime factor. Alladi~\cite{Alladi} proved that for coprime integers $\ell$ and $k\ge1$,
\begin{equation}\label{eq:alladi}
-\sum_{\substack{n\ge2\\ p_{\min}(n)\equiv \ell \ (\mathrm{mod}\ k)}}\frac{\mu(n)}{n}=\frac{1}{\varphi(k)} .
\end{equation}
Dawsey~\cite{Dawsey} replaced the congruence condition by a Chebotarev condition, Sweeting and Woo~\cite{SweetingWoo} extended the result to Galois extensions of number fields, and Kural, McDonald and Sah~\cite{KMS} showed that for any set $\mathcal S$ of prime ideals of a number field possessing a natural density $\delta(\mathcal S)$, the analogous sum of $-\mu(\mathfrak a)/N(\mathfrak a)$ over ideals whose least prime factor lies in $\mathcal S$ equals $\delta(\mathcal S)$. In this multiplicative setting, the existence of natural density is sufficient.

Ono, Schneider and Wagner~\cite{OSW1,OSW2} introduced a partition-theoretic counterpart of~\eqref{eq:alladi}. Let $\cP$ be the set of integer partitions (including the empty partition $\emptyset$), and $\cD\subset\cP$ the partitions into distinct parts. For $\lambda\neq\emptyset$ write $|\lambda|$, $\ell(\lambda)$, $\sm(\lambda)$ and $\lg(\lambda)$ for its size, number of parts, smallest part and largest part, and put $\muP(\lambda)=(-1)^{\ell(\lambda)}$ if $\lambda\in\cD$ and $\muP(\lambda)=0$ otherwise. We put $\sm(\emptyset)=\lg(\emptyset)=0$ and, for an arithmetic function $f$, $f(0)=0$, so that the empty partition contributes nothing to the sums below. For $S\subseteq\N=\{1,2,3,\dots\}$ and $|q|<1$ one has (Lemma~\ref{lem:identities})
\begin{equation}
F_S(q):=-\sum_{\substack{\lambda\in\cD\setminus\{\emptyset\}\\ \sm(\lambda)\in S}}\muP(\lambda)q^{|\lambda|}=\sum_{n\in S}q^n\,(q^{n+1};q)_\infty ,
\end{equation}
and the question is whether
\begin{equation}\label{eq:OSWlimit}
\lim_{q\to1^-}F_S(q)=d_S:=\lim_{x\to\infty}\frac{\#\{n\le x:\ n\in S\}}{x}.
\end{equation}
In~\cite{OSW1} this is proved for arithmetic progressions and for the integers not divisible by $p^k$ for any prime $p$ below a fixed bound, from which the value $1/\zeta(k)$ for the $k$-free integers is obtained by letting the bound tend to infinity; limits at roots of unity are treated as well, and all limits there are taken as $q$ tends to $1$ (or to a root of unity) from within the unit disk. In~\cite{OSW2} a general theory is developed for sets called \emph{$q$-commensurate}, defined by the requirement that the relevant asymptotic arguments remain valid. For the Abel-type $q$-density $(1-q)\sum_{n\in S}q^n$, the limit $d_S$ holds for every $S$ possessing a natural density, by the theorem of Frobenius recalled below. On this basis~\cite[Remark~1.8]{OSW2} states that the formulas of that paper, among them~\eqref{eq:OSWlimit}, ``hold for every subset $S\subseteq\N$ if one lets $q\to1^-$ strictly radially''. Schneider~\cite[Theorem~1.1 and Corollaries~1.2 and~1.3]{Schneider} (numbering as in arXiv:2011.08386v5) states that if an arithmetic function $f$ has mean value
\[
M(f):=\lim_{N\to\infty}\frac1N\sum_{n\le N}f(n)=L,
\]
then each of
\[
(q;q)_\infty\sum_{\lambda\in\cP}f(\lg(\lambda))q^{|\lambda|},\qquad -\sum_{\lambda\in\cP}\muP(\lambda)f(\sm(\lambda))q^{|\lambda|},\qquad (q;q)_\infty\sum_{n\ge1}\frac{f(n)q^n}{(q;q)_n}
\]
tends to $L$ as $q\to1$ in any Stolz sector $|1-q|\le M_0(1-|q|)$; no hypothesis on $f$ other than the existence of $M(f)$ is imposed. By Lemma~\ref{lem:identities} these three series coincide with
\begin{equation}\label{eq:Af}
\cA_f(q):=\sum_{n\ge1}f(n)\,q^n(q^{n+1};q)_\infty ,
\end{equation}
and $F_S=\cA_{\mathbf 1_S}$. The model for these statements is Frobenius' theorem~\cite{Frobenius}: if $M(f)=L$ then $(1-q)\sum_{n\ge1}f(n)q^n\to L$ as $q\to1^-$.

The kernels in the two situations are, however, of different shape. The Abel weights $(1-q)q^n$ with $q=e^{-t}$ are spread over an interval of length $\asymp 1/t$ beginning at the origin, so their relative width is of order one and Cesàro convergence controls them. The weights $\omega_n(q)=q^n(q^{n+1};q)_\infty$ in~\eqref{eq:Af} are the probabilities that a partition drawn from the Boltzmann measure $(q;q)_\infty q^{|\lambda|}$ has largest part $n$. They are concentrated in a window of length $\asymp 1/t$ around $x_t=t^{-1}\log(1/t)$, whose relative width is $\asymp 1/\log x_t$, and after centering and scaling they are given by the Gumbel density $e^{-u-e^{-u}}$. This is the Boltzmann form of the limit law of Erd\H{o}s and Lehner~\cite{ErdosLehner} for the largest part of a random partition (see also Fristedt~\cite{Fristedt}). Lemma~\ref{lem:gumbel} gives a total-variation version of this law with error $O(t)$. Since the kernel sees only intervals of length comparable to $x/\log x$, one should expect~\eqref{eq:OSWlimit} to depend on the distribution of $S$ in such intervals rather than on its natural density.

Our main result, Theorem~\ref{thm:main}, confirms this. For bounded $f$, $\cA_f(q)\to L$ as $q\to1^-$ if and only if
\[
\sum_{x<n\le x+cx/\log x}f(n)=\frac{cLx}{\log x}+o\Big(\frac{x}{\log x}\Big)\qquad(x\to\infty)
\]
for every fixed $c>0$; a single value of $c$ does not suffice (Remark~\ref{rem:onec}). The change of variable $v=\tfrac12(\log x)^2$ makes the kernel asymptotically translation invariant, with limiting kernel the Gumbel density. The Tauberian direction then follows from Wiener's theorem, because the Fourier transform of the Gumbel density is $\Gamma(1+i\xi)$, which has no real zeros. Theorem~\ref{thm:abelian} gives a quantitative Abelian bound: if $|\sum_{n\le x}f(n)-dx|\le\eps x/\log x$ for all large $x$, then $\limsup_{q\to1^-}|\cA_f(q)-d|\le 2\eps/e$. Theorem~\ref{thm:counter} shows that Theorem~\ref{thm:abelian} and Corollary~\ref{cor:consequences}(a) are sharp. An error term of exact order $x/\log x$, with arbitrarily small constant, need not give convergence. For every function exceeding $x/\log x$ by a factor tending to infinity, there are sets whose counting function stays within that function of $x/2$ and for which $F_S(q)$ oscillates between $0$ and $1$. Consequently (Corollary~\ref{cor:ND}), there is a set $S$ with natural density $\tfrac12$ for which $\liminf_{q\to1^-}F_S(q)=0$ and $\limsup_{q\to1^-}F_S(q)=1$. Thus the assertion of~\cite[Remark~1.8]{OSW2}, as it applies to~\eqref{eq:OSWlimit}, and Theorem~1.1 and Corollaries~1.2 and~1.3 of~\cite{Schneider} do not hold as stated, even for indicator functions and radial approach. The function $\mathbf 1_S$ is bounded, takes only the values $0$ and $1$, and has a mean value, and the radial path lies in every Stolz sector; thus it meets every hypothesis of these statements. The results of~\cite{OSW1,OSW2} for periodic sets and for $k$-free integers are unaffected; they are recovered in Example~\ref{ex:examples}.

The Gumbel description also yields a partition analogue of Alladi's theorem itself, which concerns primes. By Theorem~\ref{thm:primes}, for every $S\subseteq\N$ and $\alpha\ge0$ the limit $(\log(1/(1-q)))^{\alpha}F_S(q)\to\delta$ holds if and only if $S$ has $(c\delta+o(1))x/(\log x)^{1+\alpha}$ elements in every interval $(x,x+cx/\log x]$. Here the Tauberian condition is derived from positivity rather than assumed. For $\alpha=1$ and $S=\mathbb P$, a partition-theoretic prime number theorem is thereby equivalent to the equidistribution of primes in all intervals of length comparable to $x/\log x$; both statements hold. With the Chebotarev density theorem, the series over partitions into distinct parts whose smallest part is a prime $\equiv\ell\ (\mathrm{mod}\ k)$, with $(\ell,k)=1$, multiplied by $\log(1/(1-q))$, tends to $1/\varphi(k)$ (Corollary~\ref{cor:chebotarev}). The formulas of~\cite{OSW1,OSW2} give the value $0$ for every set of primes.

Off the real axis the situation is different. Along $q_t(\beta):=e^{-t(1+i\beta)}$ with $\beta\ne0$ the weights $\omega_n(q)$ reach size $\exp(\Psi_\beta/t)$ with $\Psi_\beta>0$ (Theorem~\ref{thm:instability}), so any limit must come from cancellation. For periodic $f$, Euler's identity and the modular behavior of $(q;q)_\infty$ reduce the question to the values of $\mathrm{Li}_2$ at roots of unity. For $m\ge3$ the formula holds for all residue classes modulo $m$ exactly when $|\beta|<\beta_m$, where $\beta_m$ is the minimum over $\vartheta=2\pi j/m$, $1\le j<m/2$, of the ratio of $\vartheta(2\pi-\vartheta)/4$ to the Clausen function $\mathrm{Cl}_2(\vartheta)$ (Theorem~\ref{thm:periodic} and Corollary~\ref{cor:AP}). For example $\beta_4=3\pi^2/(16G)$ with $G$ Catalan's constant, and $\beta_m\to0$. For a random set of bounded discrepancy, obtained by choosing one element from each pair $\{2k-1,2k\}$, the series is almost surely unbounded along $q_t(\beta)$ for every $\beta$ in a prescribed countable subset of $\R\setminus\{0\}$, although its radial limit exists. The density formula is therefore a radial phenomenon. Corollary~\ref{cor:stolz} records the consequences for~\cite{OSW1} and~\cite{Schneider}.

Theorem~\ref{thm:main} belongs to a classical family of Tauberian equivalences. For bounded sequences, summability by the methods of Borel and Valiron is equivalent to convergence of moving averages over windows of length $\asymp\sqrt x$, and the proofs rest on Wiener's theorem with a Gaussian kernel; see Bingham~\cite{Bingham84,Bingham89} and, for moving averages over general self-neglecting windows, Bingham and Goldie~\cite{BinghamGoldie}. The partition-theoretic series is a method of the same kind, with window $x/\log x$ and the Gumbel density in place of the Gaussian. Its kernel is not of convolution type in any fixed coordinate, and Lemma~\ref{lem:reduction} carries out the reduction to Wiener's theorem asymptotically. To our knowledge, no characterization of the sets satisfying~\eqref{eq:OSWlimit} and no example of a set with natural density for which~\eqref{eq:OSWlimit} fails have appeared before. In the multiplicative setting of~\cite{KMS}, ordering by norm produces averages controlled by natural density; the partition generating function produces instead a moving average over windows of relative length $1/\log x$.

\section{Preliminaries}

\subsection{Notation}
Throughout, $f:\N\to\C$ is bounded, $\norm{f}=\sup_n|f(n)|$, and for $S\subseteq\N$ we write $A_S(x)=\#\{n\le x:\ n\in S\}$. For $|q|<1$ and $n\ge1$ put $\omega_n(q)=q^n(q^{n+1};q)_\infty$. For $0<q<1$ we write $q=e^{-t}$ with $t>0$ and put
\[
L_t=\log(1/t)\quad(t>0),\qquad x_t=\frac{L_t}{t}\quad(0<t<1/e).
\]
The quantity $L_t$ is used for all $t>0$, while $x_t$, and $w(t)$ below, are used only for $0<t<1/e$, where $L_t>1$.
The symbol $o(1)$ refers to the limit indicated, with implied rates depending at most on $f$ and on fixed parameters named in the statement.

\subsection{Partition identities}

\begin{lemma}\label{lem:identities}
For $|q|<1$ and bounded $f$,
\begin{equation}
\begin{aligned}
\cA_f(q)&=(q;q)_\infty\sum_{\lambda\in\cP}f(\lg(\lambda))q^{|\lambda|}=-\sum_{\lambda\in\cD\setminus\{\emptyset\}}\muP(\lambda)f(\sm(\lambda))q^{|\lambda|}\\
&=(q;q)_\infty\sum_{n\ge1}\frac{f(n)q^n}{(q;q)_n},
\end{aligned}
\end{equation}
Moreover, for $|q|<1$,
\begin{equation}\label{eq:mass}
\sum_{n\ge1}\omega_n(q)=1-(q;q)_\infty ,
\end{equation}
and $\omega_n(q)>0$ for $0<q<1$.
\end{lemma}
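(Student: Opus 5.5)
The plan is to group each series by the relevant extremal part and show that every group has total weight $\omega_n(q)=q^n(q^{n+1};q)_\infty$. All manipulations below are rearrangements of absolutely convergent series for $|q|<1$: since $f$ is bounded, each sum is dominated termwise by the same sum with $f$ replaced by $\norm{f}$ and $q$ by $|q|$, and those majorants are convergent products and partition generating functions.

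\emph{Largest part.} The partitions with $\lg(\lambda)=n\ge1$ are exactly those whose parts lie in $\{1,\dots,n\}$ and that contain $n$ at least once. Their generating function is
\[
\frac{q^n}{(q;q)_n}.
\]
Multiplying by $f(n)$, summing over $n\ge1$ and using $f(0)=0$ for $\emptyset$ gives
\[
\sum_{\lambda\in\cP}f(\lg(\lambda))q^{|\lambda|}=\sum_{n\ge1}\frac{f(n)q^n}{(q;q)_n}.
\]
Multiplying by $(q;q)_\infty$ and writing $(q;q)_\infty/(q;q)_n=(q^{n+1};q)_\infty$ turns each term into $f(n)\,\omega_n(q)$. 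This gives both the first and the third expressions.

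\emph{Smallest part.} For the distinct-part series, the partitions $\lambda\in\cD$ with $\sm(\lambda)=n$ consist of the part $n$ together with an arbitrary set of distinct parts exceeding $n$. Hence
\[
\sum_{\substack{\lambda\in\cD\\ \sm(\lambda)=n}}(-1)^{\ell(\lambda)}q^{|\lambda|}=-q^n\prod_{k>n}(1-q^k)=-\omega_n(q).
\]
Multiplying by $-f(n)$ and summing over $n$ gives the second expression. The empty partition is excluded, and non-distinct partitions contribute nothing because $\muP$ vanishes on them.

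\emph{Total mass and positivity.} For \eqref{eq:mass}, take $f\equiv1$ in the smallest-part computation. Then $-\sum_{\lambda\in\cD\setminus\{\emptyset\}}\muP(\lambda)q^{|\lambda|}=1-\sum_{\lambda\in\cD}(-1)^{\ell(\lambda)}q^{|\lambda|}=1-(q;q)_\infty$, by the product expansion of $(q;q)_\infty$ over distinct parts. Alternatively, the sum telescopes: $\omega_n=(q^{n+1};q)_\infty-(q^n;q)_\infty$, so the partial sums equal $(q^{N+1};q)_\infty-(q;q)_\infty$, which tends to $1-(q;q)_\infty$.

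Positivity for $0<q<1$ is immediate, since $\omega_n(q)$ is a convergent product of positive factors with a nonzero limit. No step is a real obstacle; the only care needed is justifying the interchange of summation order, which the absolute convergence noted at the start supplies.
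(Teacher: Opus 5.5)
Your proof is correct and takes the same approach as the paper. You use the generating function $q^n/(q;q)_n$ for partitions with largest part $n$, the factorization $-q^n\prod_{k>n}(1-q^k)$ for distinct-part partitions with smallest part $n$, and the telescoping $\omega_n=(q^{n+1};q)_\infty-(q^n;q)_\infty$ for the total mass; your alternative derivation of \eqref{eq:mass} via $f\equiv1$ and $(q;q)_\infty=\sum_{\lambda\in\cD}(-1)^{\ell(\lambda)}q^{|\lambda|}$ is also valid.
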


\begin{proof}
The partitions with largest part $n$ have generating function $q^n/(q;q)_n$. Since $(q;q)_\infty/(q;q)_n=(q^{n+1};q)_\infty$, this gives the first and third expressions. A partition into distinct parts with smallest part $n$ consists of the part $n$ together with a set of distinct parts exceeding $n$, so
\[
\sum_{\lambda\in\cD,\ \sm(\lambda)=n}(-1)^{\ell(\lambda)}q^{|\lambda|}=-q^n\prod_{m>n}(1-q^m),
\]
which gives the second. All series converge absolutely because $f$ is bounded. For~\eqref{eq:mass}, put $P_n=(q^{n+1};q)_\infty$; then $P_n-P_{n-1}=q^nP_n=\omega_n(q)$, so $\sum_{n=1}^N\omega_n(q)=P_N-(q;q)_\infty$, and $P_N\to1$.
\end{proof}

Since $0<(q;q)_\infty\le 1-q$, the weights $\omega_n(q)$ form a subprobability vector of total mass tending to $1$.

\subsection{The Gumbel density}
Let
\[
g(u)=\exp(-u-e^{-u}),\qquad \mathcal G(u)=\exp(-e^{-u})=\int_{-\infty}^u g ,\qquad u\in\R .
\]
Then $g>0$, $\int_\R g=1$, and $g$ increases on $(-\infty,0]$ and decreases on $[0,\infty)$ with maximum $g(0)=e^{-1}$, so
\begin{equation}\label{eq:gprime}
\int_\R|g'(u)|\dd u=2g(0)=\frac2e .
\end{equation}
The substitution $s=e^{-u}$ gives, for $\xi\in\R$,
\begin{equation}\label{eq:fourier}
\int_\R g(u)e^{-i\xi u}\dd u=\int_0^\infty s^{i\xi}e^{-s}\dd s=\Gamma(1+i\xi)\neq0,
\end{equation}
since $\Gamma$ has no zeros, and $|\Gamma(1+i\xi)|^2=\pi\xi/\sinh(\pi\xi)$ for $\xi\ne0$.

\subsection{Tauberian and Abelian tools}

We use Wiener's Tauberian theorem in the following form~\cite{Wiener}; see~\cite[Chapter~9]{Rudin} or~\cite[Chapter~II]{Korevaar}.

\begin{theorem}[Wiener]\label{thm:wiener}
Let $\Phi\in L^\infty(\R)$ and $K\in L^1(\R)$ with $\widehat K(\xi)=\int_\R K(z)e^{-i\xi z}\dd z\ne0$ for all $\xi\in\R$. If $(K*\Phi)(w)\to a\widehat K(0)$ as $|w|\to\infty$, then $(k*\Phi)(w)\to a\widehat k(0)$ as $|w|\to\infty$ for every $k\in L^1(\R)$.
\end{theorem}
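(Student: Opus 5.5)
The plan is the standard ideal-theoretic argument. Let
\[
I=\Big\{k\in L^1(\R):\ (k*\Phi)(w)-a\widehat k(0)\to0\ \text{as }|w|\to\infty\Big\}.
\]
The hypothesis says $K\in I$, and the goal is to show $I=L^1(\R)$. I will check three structural facts about $I$ and then use Wiener's division lemma.

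\emph{Structure of $I$.}
\begin{itemize}
\item $I$ is a linear subspace, by linearity in $k$.
\item $I$ is closed in $L^1$. For $k,k'\in L^1$ we have $|(k*\Phi)(w)-(k'*\Phi)(w)|\le\norm{\Phi}\|k-k'\|_1$ uniformly in $w$, and $|\widehat k(0)-\widehat{k'}(0)|\le\|k-k'\|_1$. Hence an $L^1$-limit of elements of $I$ lies in $I$.
\item $I$ is translation invariant. If $k_h(z)=k(z-h)$, then $(k_h*\Phi)(w)=(k*\Phi)(w-h)$ and $\widehat{k_h}(0)=\widehat k(0)$, and $|w-h|\to\infty$ together with $|w|$.
\item $I$ is an ideal, i.e.\ $h*k\in I$ for $k\in I$, $h\in L^1$. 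Since $h*k=\int h(s)k_s\dd s$ is an $L^1$-valued Bochner integral, it can be approximated in $L^1$ by finite linear combinations of translates $k_s$, and closedness gives the claim. Alternatively, one can check directly by dominated convergence: $((h*k)*\Phi)(w)=\int h(s)(k*\Phi)(w-s)\dd s\to a\widehat h(0)\widehat k(0)=a\widehat{h*k}(0)$, the integrand being bounded by $|h(s)|\,\|k\|_1\norm{\Phi}$.
\end{itemize}

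\emph{Division step (main obstacle).} It suffices to show that every $k\in L^1$ with $\widehat k$ compactly supported lies in $I$. Such functions are dense in $L^1$: convolve an arbitrary $k$ with a Fejér-type approximate identity whose Fourier transforms are compactly supported. So fix such a $k$, with $\operatorname{supp}\widehat k\subset[-R,R]$. I would construct $h\in L^1$ with $\widehat h\,\widehat K=\widehat k$, so that $k=h*K\in I$ by the ideal property. This is Wiener's local inversion lemma. The steps I would follow are:
\begin{enumerate}
\item At each $\xi_0\in[-R,R]$ we have $\widehat K(\xi_0)\neq0$. Using a bump function $\tau$ with $\widehat\tau=1$ near $\xi_0$ and small support, the $L^1$-norm of $\tau*(K-\widehat K(\xi_0)e^{i\xi_0\cdot}\rho)$, for a suitable normalized $\rho$, can be made small relative to $|\widehat K(\xi_0)|$.
\item A Neumann series in the Banach algebra $L^1$ then produces $h_{\xi_0}\in L^1$ with $\widehat{h_{\xi_0}}=1/\widehat K$ on a neighborhood of $\xi_0$.
\item By compactness of $[-R,R]$ and a smooth partition of unity $\sum_j\widehat{\phi_j}=1$ on $[-R,R]$ subordinate to these neighborhoods, put $h=\sum_j\phi_j*h_{\xi_j}*k$.
\end{enumerate}
Then $\widehat h\,\widehat K=\widehat k$ on $[-R,R]$, and both sides vanish outside it.

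\emph{Conclusion and where the difficulty lies.} Combining the two steps, $I$ is closed and contains a dense subset, so $I=L^1(\R)$, which is the statement. The only nontrivial analytic input is the local inversion in the division step, specifically controlling the $L^1$-norm of the localized perturbation near $\xi_0$. There I would follow the standard estimate (Rudin, Chapter~9): it uses the continuity of translation in $L^1$ and dilation of the bump function. Everything else is soft Banach-algebra bookkeeping.
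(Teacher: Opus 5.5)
Your proposal is correct and is the standard Banach-algebra proof of Wiener's theorem found in the references the paper cites (Rudin, Chapter~9): closed translation-invariant ideal, density of functions with compactly supported Fourier transform, local inversion of $\widehat K$ by a Neumann series, and a partition of unity. The paper itself quotes the theorem without proof, and since you define $I$ with the two-sided limit $|w|\to\infty$, your argument covers the formulation used there without change.
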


The Abelian counterpart of Theorem~\ref{thm:wiener} requires no condition on the kernel.

\begin{lemma}\label{lem:abelstep}
Let $\Theta\in L^\infty([0,\infty))$ satisfy $\int_w^{w+c}\Theta\to0$ as $w\to\infty$ for every $c>0$. Then $\int_0^\infty\Theta(v)k(v-w)\dd v\to0$ as $w\to\infty$ for every $k\in L^1(\R)$.
\end{lemma}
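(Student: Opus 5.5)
The plan is to approximate $k$ in $L^1(\R)$ by step functions, and to handle step functions directly with the hypothesis on $\Theta$. The map
\[
k\mapsto \int_0^\infty\Theta(v)k(v-w)\dd v
\]
is linear in $k$. It is also bounded uniformly in $w$:
\[
\Big|\int_0^\infty\Theta(v)k(v-w)\dd v\Big|\le \|\Theta\|_\infty\|k\|_{L^1}.
\]
A standard $\eps/3$-type argument therefore reduces the claim to a dense subclass of $L^1(\R)$. Given $\eps>0$, I pick a step function $s$ with $\|k-s\|_{L^1}<\eps$. Then
\[
\limsup_{w\to\infty}\Big|\int_0^\infty\Theta(v)k(v-w)\dd v\Big|\le \limsup_{w\to\infty}\Big|\int_0^\infty\Theta(v)s(v-w)\dd v\Big|+\|\Theta\|_\infty\eps .
\]
It suffices to show the first term is zero.

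By linearity it is enough to treat $s=\mathbf 1_{[a,b)}$ with $a<b$ real, since step functions are finite linear combinations of interval indicators. Then
\[
\int_0^\infty\Theta(v)\mathbf 1_{[a,b)}(v-w)\dd v=\int_{w+a}^{w+b}\Theta(v)\dd v
\]
as soon as $w+a\ge0$, which holds for all large $w$. Setting $w'=w+a$ and $c=b-a>0$, this equals $\int_{w'}^{w'+c}\Theta$. It tends to $0$ as $w\to\infty$ by hypothesis.

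There is no real obstacle here. The only points needing care are the density of step functions (equivalently, of finite combinations of interval indicators) in $L^1(\R)$, and the fact that for $w\ge -a$ the lower limit $0$ of integration plays no role, because $\Theta$ is only defined on $[0,\infty)$.
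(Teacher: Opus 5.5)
Your proposal is correct and follows essentially the same route as the paper: you verify the claim for interval indicators directly from the hypothesis, extend it to step functions by linearity, and pass to general $k\in L^1(\R)$ using the uniform bound $\|\Theta\|_\infty\|k-s\|_{L^1}$ together with the density of step functions. (You use half-open intervals where the paper uses closed ones, which makes no difference.)
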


\begin{proof}
For $k=\mathbf 1_{[a,b]}$ and $w>-a$ the integral equals $\int_{w+a}^{w+b}\Theta\to0$. By linearity the claim holds for step functions. For general $k$ and a step function $k_0$, the two integrals differ by at most $\|\Theta\|_\infty\|k-k_0\|_{L^1}$, and step functions are dense in $L^1(\R)$.
\end{proof}

\begin{lemma}[Scheff\'e]\label{lem:scheffe}
Let $h_j\ge0$ and $h\ge 0$ be integrable on $\R$ with $h_j\to h$ pointwise and $\int h_j\to\int h$. Then $\int|h_j-h|\to0$.
\end{lemma}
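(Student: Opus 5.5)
The plan is the classical Scheffé argument, by splitting the $L^1$ difference into its positive and negative parts. Put $d_j=h-h_j$ and write $|d_j|=2d_j^+-d_j$, where $d_j^+=\max(d_j,0)$. Integrating gives
\[
\int|h_j-h|=2\int (h-h_j)^+-\Big(\int h-\int h_j\Big).
\]
The second term tends to $0$ by the hypothesis $\int h_j\to\int h$. This identity is legitimate because $h$ and $h_j$ are integrable, so every term is finite. It therefore suffices to show that $\int(h-h_j)^+\to0$.

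For this I would use dominated convergence. Since $h_j\ge0$, we have $0\le(h-h_j)^+\le h$, and $h$ is integrable, so $h$ serves as a dominating function. Pointwise convergence $h_j\to h$ gives $(h-h_j)^+\to0$ pointwise, because $y\mapsto y^+$ is continuous. Hence $\int(h-h_j)^+\to0$, and the lemma follows.

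There is no real obstacle. The only point to check is that the nonnegativity of $h_j$ is what supplies the domination $(h-h_j)^+\le h$; one cannot dominate $|h_j-h|$ directly, which is why the argument passes through the positive part. The hypothesis $h\ge0$ is used only to make $h$ an admissible dominating function; integrability of $h$ alone would already suffice there. If pointwise convergence held only almost everywhere, the same argument would go through unchanged.
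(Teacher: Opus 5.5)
Your proposal is correct and is exactly the paper's argument: the identity $\int|h_j-h|=2\int(h-h_j)^++\int h_j-\int h$, followed by dominated convergence with $0\le(h-h_j)^+\le h$. (Your side remark on $h\ge0$ is harmless: $h\ge0$ follows anyway from being a pointwise limit of the nonnegative $h_j$.)
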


\begin{proof}
$\int|h_j-h|=2\int(h-h_j)^++\int h_j-\int h$, and $\int(h-h_j)^+\to0$ by dominated convergence, since $0\le(h-h_j)^+\le h$.
\end{proof}

\subsection{Logarithmic-quadratic coordinates}
For $x\ge1$ put $v(x)=\tfrac12(\log x)^2$, with inverse $\chi(v)=e^{\sqrt{2v}}$ $(v\ge0)$, so that
\[
v'(x)=\frac{\log x}{x},\qquad \chi'(v)=\frac{\chi(v)}{\log \chi(v)} .
\]
For bounded $f$ define
\[
\psi_f(x)=f(\lceil x\rceil)\ \ (x>0),\qquad \Phi_f(v)=\psi_f(\chi(v))\ \ (v\ge0),
\]
\[
J_f(w)=\int_0^\infty\Phi_f(v)\,g(v-w)\dd v\qquad (w\in\R),
\]
and for $0<t<1/e$ put $w(t)=v(x_t)=\tfrac12(\log x_t)^2$. Since $x_t=L_te^{L_t}$ and $L_t$ decreases from $\infty$ to $1$ as $t$ increases from $0$ to $1/e$, the maps $t\mapsto x_t$ and $t\mapsto w(t)$ are continuous and strictly decreasing, with $x_t\to\infty$ and $w(t)\to\infty$ as $t\to0^+$. Substituting $v=v(x)$ gives
\begin{equation}\label{eq:Jx}
J_f(w)=\int_1^\infty\psi_f(x)\,v'(x)\,g\big(v(x)-w\big)\dd x .
\end{equation}
Since $\Phi_f$ is piecewise constant, $\int_{w}^{w+c}\Phi_f$ is defined for all $w\ge0$ and $c>0$.

\subsection{Dilogarithm, Clausen function and \texorpdfstring{$q$}{q}-products}\label{sec:dilog}
For $|z|\le1$ let $\mathrm{Li}_2(z)=\sum_{r\ge1}z^r/r^2$, and for $0\le\vartheta\le2\pi$ let
\[
\gamma(\vartheta)=\frac{\vartheta(2\pi-\vartheta)}{4},\qquad \mathrm{Cl}_2(\vartheta)=\sum_{r\ge1}\frac{\sin r\vartheta}{r^2}.
\]
The Fourier series $\sum_{r\ge1}\cos(r\vartheta)/r^2=\frac{\pi^2}6-\frac{\pi\vartheta}2+\frac{\vartheta^2}4$ gives
\begin{equation}\label{eq:Li2circle}
\mathrm{Li}_2(e^{i\vartheta})=\frac{\pi^2}{6}-\gamma(\vartheta)+i\,\mathrm{Cl}_2(\vartheta).
\end{equation}
The Clausen function satisfies $\mathrm{Cl}_2(2\pi-\vartheta)=-\mathrm{Cl}_2(\vartheta)$ and
\begin{equation}\label{eq:clausen}
\mathrm{Cl}_2(\vartheta)=-\int_0^\vartheta\log\Big(2\sin\frac x2\Big)\dd x>0\qquad(0<\vartheta<\pi),
\end{equation}
and $\mathrm{Cl}_2(\pi/2)=G$ is Catalan's constant; see~\cite{Lewin,Zagier}. We also use Euler's identity $\sum_{n\ge0}z^n/(q;q)_n=1/(z;q)_\infty$ for $|z|,|q|<1$~\cite[Chapter~2]{Andrews}, and the transformation law of the Dedekind eta function~\cite[Chapter~3]{Apostol} in the form
\begin{equation}\label{eq:eta}
(e^{-\tau};e^{-\tau})_\infty=\Big(\frac{2\pi}{\tau}\Big)^{1/2}\exp\Big(-\frac{\pi^2}{6\tau}+\frac{\tau}{24}\Big)\big(e^{-4\pi^2/\tau};e^{-4\pi^2/\tau}\big)_\infty\qquad(\mathrm{Re}\,\tau>0),
\end{equation}
with the principal branch of the square root. Finally, we use the Erd\H{o}s--Littlewood--Offord inequality~\cite{Erdos}: if $a_1,\dots,a_k$ are real numbers with $|a_i|\ge1$, then at most $\binom{k}{\lfloor k/2\rfloor}$ of the $2^k$ sums $\sum_i\eps_ia_i$, $\eps_i\in\{\pm1\}$, lie in any open interval of length $2$. Note that $\binom{k}{\lfloor k/2\rfloor}2^{-k}\le k^{-1/2}$.

For $\beta\in\R$ and $t>0$ put
\[
\tau=t(1+i\beta),\qquad q_t=q_t(\beta)=e^{-\tau};
\]
for $\beta=0$ this is $q=e^{-t}$. For $0\le\vartheta\le2\pi$ put
\begin{equation}\label{eq:kappa}
\kappa_\beta(\vartheta)=\frac{\beta\,\mathrm{Cl}_2(\vartheta)-\gamma(\vartheta)}{1+\beta^2},\qquad \psi_\beta(\vartheta)=\frac{\beta\,\gamma(\vartheta)+\mathrm{Cl}_2(\vartheta)}{1+\beta^2}.
\end{equation}
Note that $\kappa_0(\vartheta)=-\gamma(\vartheta)<0$ for $0<\vartheta<2\pi$. The behavior of $q$-products off the real axis is described by the following lemma, proved in Section~\ref{sec:proofs}.

\begin{lemma}\label{lem:poch}
Fix $\beta\in\R$, write $q=q_t(\beta)$, and for $|a|\le1$ put $\log(aq;q)_\infty=\sum_{k\ge1}\mathrm{Log}(1-aq^k)$ with the principal branch.
\begin{enumerate}
\item[(a)] Uniformly for $|a|\le1$ and $0<t\le1$, $\log(aq;q)_\infty=-\mathrm{Li}_2(a)/\tau+O_\beta(\log(2/t))$.
\item[(b)] For fixed $a$ with $|a|=1$, $a\ne1$, as $t\to0^+$,
\[
\frac{(q;q)_\infty}{(aq;q)_\infty}=\Big(\frac{2\pi}{\tau}\Big)^{1/2}(1-a)^{1/2}\exp\Big(-\frac{\pi^2/6-\mathrm{Li}_2(a)}{\tau}\Big)\,(1+o(1)),
\]
with principal square roots. In particular, for $a=e^{i\vartheta}$, $0<\vartheta<2\pi$,
\[
\Big|\frac{(q;q)_\infty}{(e^{i\vartheta}q;q)_\infty}\Big|=\frac{(2\pi|1-e^{i\vartheta}|)^{1/2}}{(1+\beta^2)^{1/4}\,t^{1/2}}\,\exp\Big(\frac{\kappa_\beta(\vartheta)}{t}\Big)(1+o(1)).
\]
\end{enumerate}
\end{lemma}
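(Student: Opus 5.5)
For part (a) I would expand each logarithm and sum over $k$. For $|aq^k|<1$ the principal branch satisfies $\mathrm{Log}(1-aq^k)=-\sum_{r\ge1}a^rq^{kr}/r$. Summing over $k\ge1$ gives
\[
\log(aq;q)_\infty=-\sum_{r\ge1}\frac{a^r}{r}\,\frac{q^r}{1-q^r}.
\]
The sum over $k$ of principal logarithms equals this double series termwise, since each individual logarithm is given by its own power series. Next compare $q^r/(1-q^r)$ with $1/(r\tau)$, using $q^r=e^{-r\tau}$ and $|e^{-r\tau}|=e^{-rt}$. Put $h(z)=e^{-z}/(1-e^{-z})-1/z$. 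This function is analytic near $0$ with $h(0)=-1/2$, and it is bounded on the sector $\{z=s(1+i\beta):s>0\}$. Boundedness holds because $|1-e^{-s(1+i\beta)}|\ge 1-e^{-s}$ for large $s$, while near $0$ we have $1-e^{-z}\sim z$.

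One might hope $h(r\tau)$ is summable against $a^r/r$ uniformly, but it is not absolutely summable: $|h|$ is bounded, so $\sum_r |h(r\tau)|/r$ diverges. So I would split at $r\le 1/t$ and $r>1/t$.
\begin{itemize}
\item For $r\le 1/t$: the main term contributes $-\sum_{r\le1/t}a^r/(r^2\tau)$, and the error is $\sum_{r\le 1/t}|h(r\tau)|/r=O_\beta(\log(2/t))$.
\item For $r>1/t$: $q^r/(1-q^r)=O_\beta(e^{-rt})$ for $rt\ge1$, and $\sum_{r>1/t}e^{-rt}/r=O(1)$. The main-term tail satisfies $\sum_{r>1/t}1/(r^2|\tau|)=O(1)$.
\end{itemize}
Together these give $-\mathrm{Li}_2(a)/\tau+O_\beta(\log(2/t))$, uniformly in $|a|\le1$. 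For $|a|=1$ the power series of each logarithm still converges, since $|aq^k|<1$.

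For part (b), the first factor comes directly from \eqref{eq:eta}. As $t\to0^+$, $\mathrm{Re}(4\pi^2/\tau)=4\pi^2/(t(1+\beta^2))\to\infty$, so the dual product tends to $1$. Hence
\[
(q;q)_\infty=(2\pi/\tau)^{1/2}\exp\big(-\pi^2/(6\tau)\big)(1+o(1)).
\]
For $(aq;q)_\infty$ with fixed $a=e^{i\vartheta}\ne1$, the $O(\log(2/t))$ error in (a) is too crude, so I need a sharper expansion. I would show that the $r$-sum of $a^rh(r\tau)/r$ converges to its formal limit $h(0)\sum_r a^r/r=\tfrac12\log(1-a)$, which yields the factor $(1-a)^{-1/2}$.

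To do this, write $h(r\tau)=h(0)+(h(r\tau)-h(0))$ and apply summation by parts with the bounded partial sums $\sum_{r\le R}a^r$. The difference $h(r\tau)-h(0)$, divided by $r$, has bounded variation in $r$ with total variation $o(1)$. This holds because $\frac{d}{dr}\big[(h(r\tau)-h(0))/r\big]$ integrates to $o(1)$: near $0$, $h(z)-h(0)=O(|z|)$, so $(h(r\tau)-h(0))/r=O(t)$ and the variation over $r\le1/t$ is $O(t\cdot 1/t\cdot t)$-type small, while the tail decays. Meanwhile $\sum_r h(0)a^r/r$ converges conditionally to $\tfrac12\log(1-a)$, with $-\tfrac12$ times $-\log(1-a)$, giving the sign conventions. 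Thus
\[
\log(aq;q)_\infty=-\mathrm{Li}_2(a)/\tau-\tfrac12\,\mathrm{Log}(1-a)+o(1).
\]
Dividing the two asymptotics gives the formula in (b). The principal branches match because $\mathrm{Re}(1-a)>0$.

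For the modulus statement, use $\tau=t(1+i\beta)$, so $\mathrm{Re}(1/\tau)=1/(t(1+\beta^2))$ and $\mathrm{Im}(1/\tau)=-\beta/(t(1+\beta^2))$. By \eqref{eq:Li2circle}, $\pi^2/6-\mathrm{Li}_2(e^{i\vartheta})=\gamma-i\,\mathrm{Cl}_2$. Then
\[
-\mathrm{Re}\big((\gamma-i\,\mathrm{Cl}_2)/\tau\big)=(\beta\,\mathrm{Cl}_2-\gamma)/(t(1+\beta^2))=\kappa_\beta/t.
\]
Also $|\tau|^{1/2}=t^{1/2}(1+\beta^2)^{1/4}$.

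The main obstacle is the summation-by-parts step establishing the $o(1)$ bound for the correction in (b). Its variation estimate must hold uniformly across both ranges $rt\le1$ and $rt>1$.
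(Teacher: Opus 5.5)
Your proposal is correct and follows essentially the paper's route: the same decomposition $q^r/(1-q^r)=1/(r\tau)+H(r\tau)$ with $H(z)=1/(e^z-1)-1/z$, the eta transformation for $(q;q)_\infty$, and Abel summation for (b). In (b) you sum $a^r$ against $(H(r\tau)-H(0))/r$, whose total variation is in fact $O(t)$, whereas the paper sums against the tails $T_r=\sum_{k\ge r}a^k/k$ using only a uniform bound on the variation of $H(r\tau)$; your variant also gives a rate. One side remark is wrong but harmless: $\sum_r|H(r\tau)|/r$ converges, because $H(z)=O(\min(1,|z|^{-1}))$ on the ray, and the paper uses exactly this bound to get (a) in one line without splitting at $r=1/t$.
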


Asymptotic expansions of this type, uniform in cones, also follow from the Euler--Maclaurin method of~\cite{BJM}; we give a self-contained proof of what is needed.

\section{Main results}

\subsection{Radial limits}

\begin{definition}
Let $L\in\C$. A bounded function $f:\N\to\C$ satisfies condition $(\mathrm{SI}_L)$ if for every fixed $c>0$,
\[
\sum_{x<n\le x+cx/\log x}f(n)=\frac{cLx}{\log x}+o\Big(\frac{x}{\log x}\Big)\qquad(x\to\infty).
\]
A set $S\subseteq\N$ satisfies $(\mathrm{SI}_L)$ if $\mathbf 1_S$ does.
\end{definition}

\begin{theorem}\label{thm:main}
Let $f:\N\to\C$ be bounded and $L\in\C$. The following are equivalent:
\begin{enumerate}
\item[(i)] $\lim_{q\to1^-}\cA_f(q)=L$, that is,
\[
-\lim_{q\to1^-}\sum_{\lambda\in\cD\setminus\{\emptyset\}}\muP(\lambda)f(\sm(\lambda))q^{|\lambda|}=\lim_{q\to1^-}(q;q)_\infty\sum_{\lambda\in\cP}f(\lg(\lambda))q^{|\lambda|}=L;
\]
\item[(ii)] $f$ satisfies $(\mathrm{SI}_L)$.
\end{enumerate}
In particular, for $S\subseteq\N$, $\lim_{q\to1^-}F_S(q)=L$ if and only if
$A_S(x+cx/\log x)-A_S(x)=cLx/\log x+o(x/\log x)$ for every $c>0$.
\end{theorem}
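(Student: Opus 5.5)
The plan is to reduce both directions to statements about the convolution $J_f(w)=\int_0^\infty\Phi_f(v)g(v-w)\dd v$ in the coordinates $v=\tfrac12(\log x)^2$. The key intermediate claim, which is the content of Lemma~\ref{lem:reduction}, is
\[
\cA_f(e^{-t})=J_f(w(t))+o(1)\qquad(t\to0^+)
\]
for every bounded $f$. To prove it, write $\omega_n(e^{-t})=\exp\big(-nt+\sum_{k>n}\log(1-e^{-kt})\big)$. By Euler--Maclaurin, or by comparison with $\int_n^\infty\log(1-e^{-yt})\dd y=-\mathrm{Li}_2(e^{-nt})/t$, this equals $\exp(-nt-e^{-nt}/t)\,(1+o(1))$ uniformly for $n$ in the window $|nt-L_t|\le A$. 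With $u=nt-L_t$ this is $t\,g(u)(1+o(1))$, giving the Gumbel law of Lemma~\ref{lem:gumbel}. Tails outside the window contribute at most $\norm{f}$ times an $\eps(A)\to0$, by the total mass identity~\eqref{eq:mass} and the integrability of $g$.

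Inside the window, the next step is to compare $u=xt-L_t$ with $v(x)-w(t)$. Put $x=x_t(1+\eta)$ with $\eta=O(1/L_t)$. Then $v(x)-w(t)=\log x_t\cdot\log(1+\eta)+O(\eta^2)=\eta\log x_t+o(1)$ and $u=\eta L_t$. Since $\log x_t=L_t+\log L_t$, the two differ by $\eta\log L_t=O(\log L_t/L_t)=o(1)$. Also $v'(x)=\log x/x$ and $t=L_t/x_t$ agree to relative error $o(1)$ on the window. Hence $\sum_n f(n)\omega_n=\int\psi_f(x)v'(x)g(v(x)-w)\dd x+o(1)$, where the Riemann sum error is controlled because $g$ has bounded variation~\eqref{eq:gprime} and the step is $t\to0$. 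By~\eqref{eq:Jx} this is $J_f(w(t))+o(1)$. Since $w(t)$ is continuous, decreasing, and tends to infinity, (i) is then equivalent to $J_f(w)\to L$ as $w\to\infty$.

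It remains to translate (ii). A window $(x,x+cx/\log x]$ in $v$-coordinates is approximately $[v(x),v(x)+c]$, since $v'(x)\cdot cx/\log x=c$ and the relative variation of $v'$ over the window is $o(1)$. Moreover $\dd x=\chi'(v)\dd v$ with $\chi'(v)=x/\log x$ essentially constant on it. So (ii) is equivalent to
\[
\int_w^{w+c}\Phi_f=cL+o(1)\qquad\text{for every } c>0;
\]
boundedness of $f$ makes the $o(1)$ perturbation of the endpoints harmless. For (ii)$\Rightarrow$(i), apply Lemma~\ref{lem:abelstep} to $\Theta=\Phi_f-L\mathbf 1_{[0,\infty)}$ with $k(z)=g(z)$. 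Since $\int_0^\infty g(v-w)\dd v\to1$, this gives $J_f(w)\to L$. For (i)$\Rightarrow$(ii), extend $\Phi_f$ by $0$ on $(-\infty,0)$. Then $J_f(w)=(\check g*\Phi_f)(w)$ with $\check g(z)=g(-z)$, whose Fourier transform is $\Gamma(1-i\xi)\ne0$ by~\eqref{eq:fourier}, and $\widehat{\check g}(0)=1$. Apply Theorem~\ref{thm:wiener} with $k=\mathbf 1_{[-c,0]}$, so that $(k*\Phi)(w)=\int_w^{w+c}\Phi_f\to cL$ as $w\to+\infty$. Wiener's theorem is stated for $|w|\to\infty$, but as $w\to-\infty$ both $J_f(w)$ and $k*\Phi$ tend to $0$. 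One can therefore apply it to $\Phi_f-L\mathbf 1_{[0,\infty)}$, or use the one-sided version, which follows by the same argument. The statement for $S$ is the case $f=\mathbf 1_S$.

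The main obstacle is the uniform reduction $\cA_f=J_f(w(t))+o(1)$. One needs the Gumbel approximation of $\omega_n$ in total variation, not merely pointwise, with the $\log L_t$ shift between $L_t$ and $\log x_t$ shown to be negligible, since a shift of order $1$ in $v$ would be fatal. I would first establish $\sum_n|\omega_n(e^{-t})-v'(n)g(v(n)-w(t))|\to0$ using Scheffé's lemma (Lemma~\ref{lem:scheffe}): show pointwise convergence in the scaled variable $u$ and convergence of total masses, both tending to $1$. This dispenses with explicit error terms.
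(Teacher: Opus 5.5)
Your proposal is correct and follows the paper's proof essentially step for step. The common route is: the reduction $\cA_f(e^{-t})=J_f(w(t))+o(1)$ of Lemma~\ref{lem:reduction}, via the Gumbel approximation, positivity and total mass, and Scheff\'e; the window translation of Lemma~\ref{lem:window}; Lemma~\ref{lem:abelstep} for (ii)$\Rightarrow$(i); and Wiener's theorem with $\widehat K(\xi)=\Gamma(1-i\xi)\neq0$ for (i)$\Rightarrow$(ii). Your repair of the two-sided limit, applying Wiener to $\Phi_f-L\mathbf 1_{[0,\infty)}$ extended by $0$, is the paper's extension of $\Phi_f$ by the constant $L$ on $(-\infty,0)$ minus $L$, so it is the same argument.
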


\begin{theorem}\label{thm:abelian}
Let $f:\N\to\C$ be bounded, $d\in\C$, $E(x)=\sum_{n\le x}f(n)-dx$, and suppose
\[
\eps:=\limsup_{x\to\infty}\frac{|E(x)|\log x}{x}<\infty .
\]
Then $\displaystyle\limsup_{q\to1^-}|\cA_f(q)-d|\le\frac{2\eps}{e}$.
\end{theorem}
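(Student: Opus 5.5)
We prove Theorem~\ref{thm:abelian} by summation by parts against the weights $\omega_n(q)$, $q=e^{-t}$, followed by a comparison of the resulting derivative weights with the Gumbel profile. Write $F(x)=\sum_{n\le x}f(n)=dx+E(x)$. Since $\sum_n\omega_n(q)=1-(q;q)_\infty\to1$ by \eqref{eq:mass}, it suffices to show that
\[
\sum_n f(n)\omega_n-d\sum_n\omega_n=\sum_n \bigl(E(n)-E(n-1)\bigr)\omega_n
\]
has $\limsup$ at most $2\eps/e$ in modulus. The increment of $dx$ contributes exactly $d$ per unit step, which is why only $E$ remains. Abel summation, using the boundedness of $E(n)\omega_n$ and $\omega_n\to0$ as $n\to\infty$ together with $E(0)=0$, gives
\[
\sum_n\bigl(E(n)-E(n-1)\bigr)\omega_n=\sum_{n\ge1}E(n)\bigl(\omega_n-\omega_{n+1}\bigr).
\]
Hence
\[
\Bigl|\sum_n f(n)\omega_n-d\sum\omega_n\Bigr|\le \sum_n|E(n)|\,|\omega_{n+1}-\omega_n|.
\]
The first $N_0$ terms, for any fixed $N_0$, are negligible because each $|\omega_{n+1}-\omega_n|\to0$ as $t\to0$: indeed $\omega_n\le(q^{n+1};q)_\infty\to0$ for fixed $n$. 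For $n>N_0$ we have $|E(n)|\le(\eps+\eta)n/\log n$.

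It remains to show that
\[
\limsup_{t\to0}\sum_n\frac{n}{\log n}\,|\omega_{n+1}-\omega_n|\le\frac2e.
\]
On the window $n=x_t+O(1/t)$ the factor $n/\log n$ equals $(1+o(1))\,x_t/L_t=(1+o(1))/t$. Thus we need the total variation of $n\mapsto\omega_n$ to be $(2/e+o(1))\,t$ in the bulk, with the tails contributing $o(t)$. There is a useful exact structure here. We have $\omega_{n+1}-\omega_n=\omega_n\bigl(q\,(1-q^{n+1})^{-1}-1\bigr)$, and the sign of $q-1+q^{n+1}$ changes exactly once in $n$. So $\omega_n$ is unimodal in $n$, and its total variation is exactly $2\max_n\omega_n-\omega_1$, up to the vanishing limit at infinity.

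Therefore the key estimate is $\max_n\omega_n=(1+o(1))\,t/e$, which we would deduce from Lemma~\ref{lem:gumbel} or directly. Writing $n=x_t+u/t$, we get $\omega_n=e^{-tn}\exp\bigl(\sum_{m>n}\log(1-q^m)\bigr)$, and $\sum_{m>n}\log(1-q^m)=-(1+o(1))\,q^{n}/t=-(1+o(1))\,e^{-u}$. Hence $\omega_n\approx t\,e^{-u-e^{-u}}=t\,g(u)$, and the maximum is about $t\,g(0)=t/e$, matching \eqref{eq:gprime}.

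The main obstacle is weighting by $n/\log n$ rather than the constant $1/t$. Unimodality gives the exact variation, but the weight varies, so we must show that the tails carry negligible weighted variation. For $n\le x_t/2$ we have $\omega_n\le\exp(-q^{n+1}/t)$, which is superpolynomially small, and the factor $n/\log n\le 1/t^2$ is harmless. For $n\ge Kx_t$, with $K>1$ fixed, the weighted variation is bounded by $\sum_n\frac{n}{\log n}\,\omega_n\,(1-q)\ll\sum_n n\,q^n\,t$. This is polynomially small relative to $t$, since $q^{n}\le t^{K}$ there. In the middle range $x_t/2\le n\le Kx_t$, where $n/\log n=(1+O(\log K/L_t))\,n/\log x_t$, we use $\sum_n n\,|\Delta\omega_n|=x_t\sum_n|\Delta\omega_n|+\sum_n|n-x_t|\,|\Delta\omega_n|$. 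The second sum, via the Gumbel approximation with $|n-x_t|\asymp|u|/t$, is $O(1)$ times $t\int|u|\,|g'(u)|\,du/t$ per unit of $t$, and after dividing by $\log x_t$ it is $o(t\cdot 1/t)$. Combining the three ranges gives the limsup $\le(\eps+\eta)\cdot 2/e$, and letting $\eta\to0$ finishes the proof.
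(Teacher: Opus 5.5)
Your argument is correct in outline, but it follows a different route from the paper. The paper first replaces $\omega_n$ by the Gumbel kernel $k_t(n)=t\,g(tn-L_t)$ via Lemma~\ref{lem:gumbel}. At that stage the $\ell^1$ error $8t$ is multiplied only by the bounded factor $f-d$. Only then does it sum by parts, bounding $|k_t(n)-k_t(n+1)|\le\int_n^{n+1}t^2|g'(ty-L_t)|\dd y$. It obtains $2/e=\int_\R|g'|$ from the single substitution $u=ty-L_t$ and dominated convergence; this absorbs the variable weight $y/\log y$, with only a crude separate bound for the far-left range.

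You instead sum by parts against the exact weights and use the discrete analogue of $\int|g'|=2g(0)$. The ratio $\omega_{n+1}/\omega_n=q/(1-q^{n+1})$ makes $n\mapsto\omega_n$ unimodal, so its total variation is at most $2\max_n\omega_n$, and you handle $n/\log n$ by a three-range split. Your route needs only elementary one-sided bounds on $\omega_n$ rather than an approximation lemma. The paper's route is shorter given Lemma~\ref{lem:gumbel}, which it needs anyway for Theorem~\ref{thm:main}.

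Once you have summed by parts, errors of size $O(t)$ are multiplied by weights of size about $1/t$. So the $\ell^1$ statement of Lemma~\ref{lem:gumbel} is too weak for your two quantitative inputs. It yields only $\max_n\omega_n\le t/e+8t$. Combined with $|n-x_t|\le Kx_t$, it yields only $\sum_n|n-x_t|\,|\omega_{n+1}-\omega_n|=O(KL_t)$, which is not $o(\log x_t)$.

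Your ``directly'' option repairs both.
\begin{enumerate}
\item[(1)] From $\log(1-x)\le-x$ and $1-q\le t$ you get $\omega_n\le e^{-nt}\exp(-e^{-(n+1)t}/t)=e^t\,t\,g(u_n+t)$ with $u_n=nt-L_t$. Hence $\max_n\omega_n\le e^t t/e$.
\item[(2)] For $n\ge x_t/2$ you have $q^{n+1}\le t^{1/2}$, hence $1-q^{n+1}\ge\tfrac12$. This gives $|\omega_{n+1}-\omega_n|\le 2\omega_n(t+q^{n})\le 2e^t t^2(1+e^{-u_n})\,g(u_n+t)$. Since $|n-x_t|=|u_n|/t$, the sum $\sum_n|n-x_t|\,|\omega_{n+1}-\omega_n|$ is essentially a Riemann sum with mesh $t$ for $2\int_\R|u|(1+e^{-u})g(u)\dd u<\infty$. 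It is therefore $O(1)$, and your middle-range estimate goes through.
\end{enumerate}
With these two bounds in place, your three ranges combine to give $\limsup\le 2(\eps+\eta)/e$ as you claim.
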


\begin{corollary}\label{cor:consequences}
Let $f:\N\to\C$ be bounded.
\begin{enumerate}
\item[(a)] If $\sum_{n\le x}f(n)=dx+o(x/\log x)$, then $\lim_{q\to1^-}\cA_f(q)=d$.
\item[(b)] If $\lim_{q\to1^-}\cA_f(q)=L$ exists, then $f$ has mean value $M(f)=L$.
\end{enumerate}
\end{corollary}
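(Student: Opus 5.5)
Both parts follow from Theorems~\ref{thm:main} and~\ref{thm:abelian}, and I would take them in order.

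For (a), the hypothesis says exactly that $|E(x)|\log x/x\to0$, i.e.\ $\eps=0$ in Theorem~\ref{thm:abelian}. That theorem then gives $\limsup_{q\to1^-}|\cA_f(q)-d|\le 0$, which is the claim. Alternatively, one can verify $(\mathrm{SI}_d)$ directly: $\sum_{x<n\le x+cx/\log x}f(n)=E(x+cx/\log x)-E(x)+dcx/\log x+O(1)$, and both $E$-terms are $o(x/\log x)$ because $\log(x+cx/\log x)\sim\log x$. Theorem~\ref{thm:main} then applies.

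For (b), Theorem~\ref{thm:main} shows that $f$ satisfies $(\mathrm{SI}_L)$. It remains to pass from short-window averages to the Cesàro mean; I would do this by a dyadic-type covering with $c=1$.

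Define $x_0=X$ and $x_{j+1}=x_j+x_j/\log x_j$. Given $\eta>0$, choose $X$ so large that
\[
\Big|\sum_{x<n\le x+x/\log x}f(n)-\frac{Lx}{\log x}\Big|\le\eta\,\frac{x}{\log x}\qquad\text{for all }x\ge X.
\]
For large $N$, let $J$ be the last index with $x_J\le N$. Summing over the blocks $(x_j,x_{j+1}]$ for $j<J$ gives
\[
\Big|\sum_{X<n\le x_J}f(n)-L\sum_{j<J}(x_{j+1}-x_j)\Big|\le\eta\sum_{j<J}(x_{j+1}-x_j)\le\eta N.
\]
The telescoping sum $\sum_{j<J}(x_{j+1}-x_j)$ equals $x_J-X$. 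The leftover range $(x_J,N]$ has length at most $N/\log N+1=o(N)$, and the initial range $n\le X$ contributes $O(X)$. Since $f$ is bounded, dividing by $N$ gives $\limsup_N|N^{-1}\sum_{n\le N}f(n)-L|\le\eta$. As $\eta$ is arbitrary, $M(f)=L$.

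The only point needing care is that the $o(x/\log x)$ error in $(\mathrm{SI}_L)$ is relative to the block length. Summed over the blocks, the errors therefore total at most $\eta$ times the total length, so there is no loss from the number of blocks, which is about $\log N\cdot\log\log N$ in total but irrelevant here. I expect no real obstacle; the substantive work is contained in Theorems~\ref{thm:main} and~\ref{thm:abelian}.
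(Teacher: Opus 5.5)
Your proposal is correct and follows the paper's proof essentially verbatim. Part (a) is the case $\eps=0$ of Theorem~\ref{thm:abelian}, and part (b) combines Theorem~\ref{thm:main} with $(\mathrm{SI}_L)$ for $c=1$ and the same chain of blocks $x_{j+1}=x_j+x_j/\log x_j$. One harmless slip is in your closing remark: the number of blocks up to $N$ is about $\tfrac12(\log N)^2$, not $\log N\cdot\log\log N$, because each block has length about $1$ in the coordinate $v=\tfrac12(\log x)^2$; as you say, this count plays no role.
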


\begin{theorem}\label{thm:counter}
\begin{enumerate}
\item[(a)] Let $\theta>0$ and $0<\delta\le\frac12$. There is a set $S\subseteq\N$ with
\begin{equation}\label{eq:AS-osc}
A_S(x)=\frac x2+\frac{\delta}{\theta}\,\frac{x}{\log x}\,\sin\!\Big(\frac{\theta(\log x)^2}{2}\Big)+O_\theta\Big(\frac{x}{(\log x)^2}\Big),
\end{equation}
such that, as $t\to0^+$,
\begin{equation}\label{eq:FS-osc}
F_S(e^{-t})=\frac12+\delta\,\mathrm{Re}\big(e^{i\theta w(t)}\,\Gamma(1-i\theta)\big)+o(1).
\end{equation}
In particular $\limsup_{q\to1^-}F_S(q)$ and $\liminf_{q\to1^-}F_S(q)$ equal $\frac12\pm\delta\,(\pi\theta/\sinh\pi\theta)^{1/2}$.
\item[(b)] Let $h:[2,\infty)\to(0,\infty)$ satisfy $h(x)\log x/x\to\infty$. There is a set $S\subseteq\N$ with $|A_S(x)-x/2|\le h(x)$ for all sufficiently large $x$ such that
\[
\liminf_{q\to1^-}F_S(q)=0\qquad\text{and}\qquad\limsup_{q\to1^-}F_S(q)=1 .
\]
\end{enumerate}
\end{theorem}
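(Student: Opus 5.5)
The plan is to work entirely in the logarithmic-quadratic coordinate $v=\tfrac12(\log x)^2$. I use the asymptotic reduction that underlies Theorem~\ref{thm:main} (Lemma~\ref{lem:reduction}, together with the total-variation Gumbel law, Lemma~\ref{lem:gumbel}): for bounded $f$, $\cA_f(e^{-t})=J_f(w(t))+o(1)$ as $t\to0^+$. Both parts then reduce to building a set $S$ whose indicator is, on average, a prescribed profile $\rho$ in the $v$-variable, and computing $J$ for that profile.

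\textbf{Part (a).} Put $\rho(x)=\tfrac12+\delta\cos(\theta v(x))$. Since $\delta\le\tfrac12$, we have $\rho\in[0,1]$. Define $S$ greedily by declaring $n\in S$ exactly when $\lfloor\sum_{k\le n}\rho(k)\rfloor>\lfloor\sum_{k<n}\rho(k)\rfloor$. Then $A_S(x)=\int_1^x\rho+O(1)$.

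\emph{The counting function.} Differentiating $\frac{x}{\theta\log x}\sin(\theta v(x))$ gives $\cos(\theta v(x))+\frac{\sin(\theta v(x))}{\theta\log x}(1-\frac1{\log x})$. The leftover term is oscillatory, and one more integration by parts bounds its integral by $O_\theta(x/(\log x)^2)$. This yields~\eqref{eq:AS-osc}.

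\emph{Replacing $\mathbf 1_S$ by $\rho$.} Write $D(x)=\sum_{n\le x}\mathbf 1_S(n)-\int_1^x\rho$, which is $O(1)$. In the form~\eqref{eq:Jx}, integrate by parts against $v'(x)g(v(x)-w)$. This gives $|J_{\mathbf 1_S}(w)-J_\rho(w)|\le\sup|D|\cdot\int|\tfrac{d}{dx}(v'(x)g(v(x)-w))|\,dx$. The integral is $O(\sup_{x\ge X(w)}\log x/x)$, up to a tail near $x=1$ that carries negligible weight $g$, and so it tends to $0$.

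\emph{Evaluating $J_\rho$.} Here $J_\rho(w)=\int_0^\infty(\tfrac12+\delta\cos\theta v)\,g(v-w)\dd v$. Extending the integral to all of $\R$ costs $o(1)$, and substituting $u=v-w$ together with~\eqref{eq:fourier} gives
\[
J_\rho(w)=\tfrac12+\delta\,\mathrm{Re}\big(e^{i\theta w}\Gamma(1-i\theta)\big)+o(1).
\]
This is~\eqref{eq:FS-osc}.

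\emph{The extreme values.} Since $w(t)$ is continuous and increases to $\infty$ as $t\to0^+$, the phase $e^{i\theta w(t)}$ sweeps the unit circle infinitely often. Hence the $\limsup$ and $\liminf$ are $\tfrac12\pm\delta|\Gamma(1-i\theta)|$, and $|\Gamma(1-i\theta)|=(\pi\theta/\sinh\pi\theta)^{1/2}$.

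\textbf{Part (b).} Choose $R(x)\to\infty$ so slowly that $R(x)\,x/\log x\le h(x)$ for large $x$, and also $R(x)\le\sqrt{\log x}$; this is possible because $h(x)\log x/x\to\infty$. Partition the $v$-axis into consecutive pairs of blocks of length $R_k$, with $R_k=R(\chi(a_k))$ at the block start $a_k$. Put all integers of the first block of each pair into $S$ and none of the second.

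\emph{Discrepancy.} A $v$-block of length $R$ at height $x$ corresponds to an $x$-interval of length $(1+o(1))Rx/\log x$, because $R=o(\log x)$ and so $x$ changes only by a factor $e^{O(R/\log x)}=1+o(1)$. Consequently, within a pair the full and empty blocks have nearly equal lengths. The length mismatch is $O(R^2x/(\log x)^2)=o(x/\log x)$ per pair. Summing over the $\ll\log x/R$ pairs up to $x$, with geometric growth of $x$, gives a total mismatch $o(h(x))$. Adding at most one block excursion of $Rx/(2\log x)\le h(x)/2$ gives $|A_S(x)-x/2|\le h(x)$.

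\emph{Oscillation.} Take $w$ at the midpoint of a full block. Then $\Phi_{\mathbf 1_S}=1$ on $[w-R_k/2,w+R_k/2]$, so $J(w)\ge\int_{-R_k/2}^{R_k/2}g\to1$; similarly $J(w)\to0$ at midpoints of empty blocks. By the reduction, and since $w(t)$ passes through all these midpoints, $\limsup F_S=1$ and $\liminf F_S=0$. The bounds $0\le F_S\le1-(q;q)_\infty$ from~\eqref{eq:mass} show these are the extreme possible values.

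\textbf{Main obstacle.} The step I expect to be delicate is the bookkeeping in the counting function: the $O_\theta(x/(\log x)^2)$ error in~(a), and in~(b) keeping the block-length mismatches summable to $o(h)$. This is why I choose the blocks with equal $v$-length within each pair and let $R$ vary slowly. The reduction to $J$ itself is taken from the proof of Theorem~\ref{thm:main}.
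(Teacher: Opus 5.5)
Your proof is correct in substance, and it differs from the paper's in both parts.

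\textbf{Part (a).} The set and the computation of $A_S$ are the paper's. The difference is how you replace $\mathbf 1_S$ by $\rho$ inside $J$. You integrate by parts in $x$ against $v'(x)g(v(x)-w)$, using only two facts: $\int_1^x(\psi_{\mathbf 1_S}-\rho)$ is bounded, and the total variation of this kernel in $x$ tends to $0$. The paper instead derives \eqref{eq:window-osc} from \eqref{eq:AS-osc}, converts it by Lemma~\ref{lem:window} into $\int_w^{w+c}(\Phi_{\mathbf 1_S}-r)\to0$, and applies Lemma~\ref{lem:abelstep} with $k=g$.

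Your route is a self-contained Abelian partial summation, close to the proof of Theorem~\ref{thm:abelian}, and needs neither the window lemma nor Lemma~\ref{lem:abelstep}. The paper's route reuses the machinery of Theorem~\ref{thm:main} and yields \eqref{eq:window-osc}, which Remark~\ref{rem:onec} needs.

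Your bound $A_S(x)=\int_1^x\rho+O(1)$ is true but needs a second-order Euler--Maclaurin step, since $\rho'$ is not absolutely integrable while $\rho''$ is. The paper's cruder $O_\theta((\log x)^2)$ would also work for you if you keep the kernel weight instead of taking $\sup|D|$.

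\textbf{Part (b).} The paper plants one empty and one full block of length $\lfloor h(X_k)\rfloor$ at sparse heights $X_{k+1}\ge4X_k$ in the even numbers. The discrepancy is then computed exactly, and $F_S\to0$, $F_S\to1$ follow directly from the Gumbel tails via Lemma~\ref{lem:gumbel}. Your square wave in $v$, which resembles the Remark after Corollary~\ref{cor:ND}, is more uniform and concludes through $J$. The price is that you must control an accumulated drift.

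That drift is where your bookkeeping slips. The per-pair mismatches all have the same sign, because the second (empty) block of each pair is the longer one in $x$. Their sum up to $x$ is therefore of order $\int^x R(y)\,\mathrm{d}y/\log y$. Under your constraint $R(x)x/\log x\le h(x)$ this can be comparable to $h(x)$; it need not be $o(h(x))$. Also, there are $\asymp(\log x)^2/R$ pairs below $x$; $\log x/R$ is only the number at height $\asymp x$.

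The conclusion survives. Take $R$ nondecreasing, so that the block lengths in $x$ increase. Then $\sum_{k\le K}(E_k-F_k)\le E_K$ for consecutive full/empty lengths $F_k\le E_k\le F_{k+1}$, so the drift is at most half a block length. This gives $|A_S(x)-x/2|\le(\tfrac12+o(1))R(x)x/\log x+O((\log x)^2)\le h(x)$ for large $x$. Alternatively, choose $R$ with $R(x)x/\log x=o(h(x))$, which the hypothesis $h(x)\log x/x\to\infty$ permits.
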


Taking $\theta=1$ and $\delta=\min(\eps,1)/2$ in part~(a) produces, for every $\eps>0$, a set with $|A_S(x)-x/2|\le\eps x/\log x$ for all large $x$ for which $F_S(q)$ has no limit. Together with Corollary~\ref{cor:consequences}(a), this shows that the threshold $x/\log x$ in Theorem~\ref{thm:abelian} cannot be improved. Part~(b) shows that once the error term is allowed to exceed $x/\log x$ by a factor tending to infinity, nothing beyond the trivial bounds $0\le F_S\le 1$ survives.

\begin{corollary}\label{cor:ND}
There is a set $S\subseteq\N$ of natural density $\tfrac12$ such that $\liminf_{q\to1^-}F_S(q)=0$ and $\limsup_{q\to1^-}F_S(q)=1$. Consequently, the existence of $M(f)$ does not imply convergence of $\cA_f(q)$ as $q\to 1^-$, even for indicator functions and radial approach, and the formula~\eqref{eq:OSWlimit} does not hold for all sets possessing a natural density.
\end{corollary}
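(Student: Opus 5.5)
This follows directly from Theorem~\ref{thm:counter}(b) once we pick a suitable growth function $h$; the set is then checked against the definition of natural density.

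First choose $h(x)=x/\sqrt{\log x}$ for $x\ge2$. Then $h(x)\log x/x=\sqrt{\log x}\to\infty$, so the hypothesis of Theorem~\ref{thm:counter}(b) holds. That theorem supplies a set $S\subseteq\N$ with $|A_S(x)-x/2|\le x/\sqrt{\log x}$ for all sufficiently large $x$, and with $\liminf_{q\to1^-}F_S(q)=0$ and $\limsup_{q\to1^-}F_S(q)=1$.

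Dividing the counting bound by $x$ gives $|A_S(x)/x-\tfrac12|\le(\log x)^{-1/2}\to0$, so $S$ has natural density $\tfrac12$. This proves the first assertion.

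For the consequences, take $f=\mathbf 1_S$. It is bounded, takes only the values $0$ and $1$, and has mean value $M(f)=\tfrac12$. Since $\cA_f=F_S$ by Lemma~\ref{lem:identities}, the limit $\lim_{q\to1^-}\cA_f(q)$ does not exist along the radial path. Hence the existence of $M(f)$ does not imply convergence of $\cA_f(q)$. In particular \eqref{eq:OSWlimit} fails for this $S$, even though $d_S=\tfrac12$ exists.

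There is no genuine obstacle here; all of the difficulty lies in Theorem~\ref{thm:counter}(b), which is assumed. The only point to verify is that some admissible $h$ is $o(x)$, and any $h$ with $x/\log x\ll h(x)=o(x)$ works.
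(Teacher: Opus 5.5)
Your proof is correct and is exactly the paper's argument: apply Theorem~\ref{thm:counter}(b) with $h(x)=x/\sqrt{\log x}$, use $h(x)=o(x)$ to get natural density $\tfrac12$, and note that $f=\mathbf 1_S$ is bounded with $M(f)=\tfrac12$ while $\cA_f=F_S$ has no radial limit. One small correction to your closing aside: the condition there should read $x/\log x=o(h(x))$, that is, $h(x)\log x/x\to\infty$ as the theorem requires, rather than $x/\log x\ll h(x)$ in the $O$-sense.
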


Thus Corollary~\ref{cor:consequences}(a) is not a consequence of Frobenius' theorem: by Corollary~\ref{cor:ND}, its conclusion fails under the hypothesis $M(f)=d$ alone, and the additional input is the size $o(x/\log x)$ of the error term.

\begin{remark}\label{rem:onec}
In condition $(\mathrm{SI}_L)$ the quantifier ``for every $c>0$'' cannot be replaced by a single value of $c$. Given $c_0>0$, apply Theorem~\ref{thm:counter}(a) with $\theta=2\pi/c_0$. By~\eqref{eq:window-osc} below, $A_S(x+c_0x/\log x)-A_S(x)=c_0x/(2\log x)+o(x/\log x)$, yet $F_S(q)$ oscillates with amplitude $\delta(2\pi^2/(c_0\sinh(2\pi^2/c_0)))^{1/2}>0$. The obstruction is that the Fourier transform of $\mathbf 1_{[0,c_0]}$ vanishes at $\xi=2\pi/c_0=\theta$, so a single window length cannot detect oscillation of $\Phi_{\mathbf 1_S}$ at that frequency.
\end{remark}

\begin{example}\label{ex:examples}
(1) If $S$ is a union of $s$ residue classes modulo $m$, then $A_S(x)=sx/m+O(1)$. For $k\ge2$ let $Q_k$ be the set of $k$-free integers; then $A_{Q_k}(x)=\sum_{d\le x^{1/k}}\mu(d)\lfloor x/d^k\rfloor=x/\zeta(k)+O(x^{1/k})$. Corollary~\ref{cor:consequences}(a) gives $F_S(q)\to s/m$ and $F_{Q_k}(q)\to1/\zeta(k)$, recovering the corresponding radial formulas of~\cite{OSW1,OSW2}.

(2) The prime number theorem with the classical error term gives, for some $c_1>0$, the bound $\sum_{n\le x}\mu(n)\ll x\exp(-c_1\sqrt{\log x})$ (see~\cite[Chapter~6]{MV}). Hence $\cA_\mu(q)\to0$, that is,
\[
\lim_{q\to1^-}\sum_{\lambda\in\cD\setminus\{\emptyset\}}\muP(\lambda)\,\mu(\sm(\lambda))\,q^{|\lambda|}=0 .
\]

(3) Let $\mathbb P$ be the set of primes. Here $A_{\mathbb P}(x)\sim x/\log x$, so Theorem~\ref{thm:abelian} gives only $\limsup F_{\mathbb P}(q)\le 2/e$. However, $\pi(x)=x/\log x+o(x/\log x)$ gives $\pi(x+cx/\log x)-\pi(x)=o(x/\log x)$, so $(\mathrm{SI}_0)$ holds and Theorem~\ref{thm:main} gives $F_{\mathbb P}(q)\to0$. Thus condition $(\mathrm{SI}_L)$ is strictly weaker than the hypothesis of Corollary~\ref{cor:consequences}(a).
\end{example}

\subsection{Sparse sets and primes}
Alladi's formula~\eqref{eq:alladi} concerns primes. Every set $S$ of primes satisfies $(\mathrm{SI}_0)$, since $0\le A_S(x+H)-A_S(x)\le\pi(x+H)-\pi(x)=o(x/\log x)$ for $H=cx/\log x$; hence $F_S(q)\to0$ by Theorem~\ref{thm:main}. For sets of local density $(\log x)^{-\alpha}$ the correct analogue carries the normalization $(\log(1/(1-q)))^{\alpha}$, and the short-interval characterization persists.

\begin{theorem}\label{thm:primes}
Let $S\subseteq\N$, $\alpha\ge0$ and $\delta\ge0$. The following are equivalent:
\begin{enumerate}
\item[(i)] $\displaystyle\lim_{q\to1^-}\Big(\log\frac{1}{1-q}\Big)^{\alpha}F_S(q)=\delta$;
\item[(ii)] for every fixed $c>0$,
\[
A_S\Big(x+\frac{cx}{\log x}\Big)-A_S(x)=\frac{c\,\delta\,x}{(\log x)^{1+\alpha}}+o\Big(\frac{x}{(\log x)^{1+\alpha}}\Big)\qquad(x\to\infty).
\]
\end{enumerate}
\end{theorem}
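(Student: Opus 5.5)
The plan is to reduce Theorem~\ref{thm:primes} to the Wiener argument behind Theorem~\ref{thm:main}, applied to a renormalized function. Put $f(n)=(\log n)^{\alpha}\mathbf 1_S(n)$ for $n\ge2$. This $f$ need not be bounded a priori, and obtaining boundedness in a suitable averaged sense is where positivity enters.

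First I compare normalizations. On the kernel window $n=x_t(1+O(1/\log x_t))$ we have $\log n=\log x_t+O(1)$ and $\log x_t=L_t+\log L_t$. Also $\log(1/(1-q))=L_t+O(t)$. Hence $(\log n)^\alpha/(\log(1/(1-q)))^\alpha\to1$ uniformly on windows $|v(n)-w(t)|\le C$. Lemma~\ref{lem:gumbel} shows the Gumbel tails outside such windows carry weight $o(1)$ as $C\to\infty$, at least for weights evaluated against functions of polynomial growth in $\log n$. In particular, $(\log(1/(1-q)))^\alpha F_S(q)$ equals $\cA_f(q)$ up to a factor $1+o(1)$ plus tail errors. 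Condition (ii) is exactly $(\mathrm{SI}_\delta)$ for $f$, after the same replacement of $(\log n)^\alpha$ by $(\log x)^\alpha$ inside windows of relative length $c/\log x$.

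Second, the direction (ii)$\Rightarrow$(i). Summing (ii) over consecutive windows gives $A_S(2x)-A_S(x)\ll x/(\log x)^{1+\alpha}$, which bounds the local density, so $\Phi_f$ is bounded in the averaged sense $\sup_w\int_w^{w+1}|\Phi_f|<\infty$. This is enough to run Lemma~\ref{lem:abelstep} and Lemma~\ref{lem:reduction}: both only use bounds of $\int_w^{w+c}|\Theta|$, or can be rewritten that way by approximating $g$ by step functions. The tails far from $x_t$ are handled by the crude bound $A_S(x)\ll x(\log x)^{-\alpha}$ together with the Gumbel decay: $e^{-e^{-u}}$ kills small $n$ doubly exponentially, and $e^{-u}$ controls large $n$ against a polynomial factor in $\log n$.

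Third, the direction (i)$\Rightarrow$(ii), which is the main obstacle, because Wiener's theorem needs $\Phi_f\in L^\infty$ and (i) gives no a priori bound. Here I use positivity. Since $\omega_n(e^{-t})>0$ and $g$ is bounded below by a positive constant on $[-1,1]$, (i) implies $\int_w^{w+1}\Phi_f\le C$ for all large $w$. Thus $\Phi_f$ is a nonnegative function that is uniformly bounded in average on unit intervals. I then apply Wiener's theorem in the version for such "$L^\infty$-in-mean" functions (the Pitt form for positive functions). If that version is not available, I replace $\Phi_f$ by the smoothed function $\Phi_f*\mathbf 1_{[0,1]}$, which is genuinely bounded; its convolution with $g$ still tends to $\delta$, and $\widehat{g}\cdot\widehat{\mathbf 1_{[0,1]}}$ vanishes only where $\widehat{\mathbf 1_{[0,1]}}$ does. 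To avoid those zeros I use two smoothing lengths $1$ and $\sqrt2$, whose transforms have no common real zero, or more simply a Gaussian smoothing. Wiener then gives $\int_w^{w+c}\Phi_f\to c\delta$ for the smoothed function. To pass back to unsmoothed windows I use monotonicity: $A_S$ is nondecreasing, so windows can be sandwiched between smoothed windows of length $c\pm\eta$, and letting $\eta\to0$ gives the claim. Finally, converting $\int_w^{w+c}\Phi_f\,dv\to c\delta$ back to $x$-coordinates, via $dv=(\log x/x)\,dx$, and removing the factor $(\log n)^\alpha$ yields (ii).
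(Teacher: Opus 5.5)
Your architecture is the paper's. Your $\Phi_f$, with $f=(\log n)^{\alpha}\mathbf 1_S$, is a smeared version of the measure $\mu_S$. Your positivity argument from (i) is how the paper obtains~\eqref{eq:B}. Smoothing, then Wiener with kernel $g(-\cdot)$, then a monotone sandwich, is Lemma~\ref{lem:measureTauber}.

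The genuine gap is the reduction $L_t^{\alpha}F_S(e^{-t})-J(w(t))\to0$ for the unbounded weight. Both directions need it, and you only assert it. Lemma~\ref{lem:reduction} does not run on window bounds alone: it bounds the error by $\norm f$ times the $L^1$ distance of two $t$-dependent kernels (from Scheff\'e's lemma), and here $\norm f=\infty$. Lemma~\ref{lem:abelstep}, by contrast, does adapt, via step-function bounds on $g$ as in Lemma~\ref{lem:measureTauber}.

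The tail arguments you offer in place of the reduction fail for $\alpha>0$:
\begin{enumerate}
\item[(1)] Tails against functions of polynomial growth in $\log n$ are not $o(1)$. For $f(n)=(\log n)^{\alpha}$, the range $|tn-L_t|\ge C$ contributes $\asymp L_t^{\alpha}e^{-C}$, which is unbounded as $t\to0^+$ for fixed $C$.
\item[(2)] The global bound $A_S(x)\ll x(\log x)^{-\alpha}$ gives only $\ll(L_t+j+1)/(tL_t^{\alpha})$ elements of $S$ with $tn-L_t\in[j,j+1)$. Against the right Gumbel tail $e^{-u}$, it therefore bounds the range $u\ge U$ only by $O(L_te^{-U})$, which is not uniformly small in $t$. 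It disposes of $n\notin[x_t/2,2x_t]$ and nothing more.
\end{enumerate}

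The missing ingredient is already in your outline: the local bound~\eqref{eq:B}. It follows directly from (ii) with $c=1$, and from (i) by your positivity argument. It does not come from summing windows: your inequality $A_S(2x)-A_S(x)\ll x/(\log x)^{1+\alpha}$ is false, e.g.\ for $S=\mathbb P$ and $\alpha=1$, and summing gives only $x/(\log x)^{\alpha}$.

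Condition~\eqref{eq:B} gives $\ll(L_t+|j|+1)/(tL_t^{1+\alpha})$ elements of $S$ per unit $u$-window. That is exactly the factor $L_t$ you are missing. It yields $t$-uniform summable majorants over unit $u$-windows, both for $L_t^{\alpha}t\,g(tn-L_t)$ and for the $v$-coordinate weights. Combined with $L_t/\log n\to1$, $\sigma_t'(u)\to1$ and $\sigma_t(u)\to u$ uniformly for $|u|\le U$, this proves the reduction; this is Lemma~\ref{lem:primesreduction}.

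Two smaller points:
\begin{enumerate}
\item[(a)] The sandwich between lengths $c\pm\eta$ needs smoothing at scale $\eta$, by $\eta^{-1}\mathbf 1_{[0,\eta]}$, not at scale $1$. This gives a function bounded by $M/\eta$ with $M=\sup_w\int_w^{w+1}\Phi_f$.
\item[(b)] The zeros of $\widehat{\mathbf 1_{[0,1]}}$ never enter. The Wiener kernel stays $g(-\cdot)$ and only the bounded input is smoothed, so the two-length and Gaussian devices are unnecessary.
\end{enumerate}
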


For $\alpha=0$ this is the case $f=\mathbf 1_S$ of Theorem~\ref{thm:main}. For $\alpha>0$ the relevant weight $\mathbf 1_S(n)(\log n)^\alpha$ is unbounded, and the Tauberian condition that boundedness provides in Theorem~\ref{thm:main} is here derived from positivity rather than assumed. Since the kernel is bounded below on its central window, (i) already forces $S$ to have $O(x/(\log x)^{1+\alpha})$ elements in every interval $(x,x+x/\log x]$ (Section~\ref{sec:primesproof}).

\begin{corollary}\label{cor:primesuff}
Let $S\subseteq\N$, $\alpha\ge0$, $\delta\ge0$, and $\mathrm{li}_\alpha(x)=\int_2^x(\log y)^{-\alpha}\dd y$, so that $\mathrm{li}_1=\mathrm{li}$. If $A_S(x)=\delta\,\mathrm{li}_\alpha(x)+\delta'x/(\log x)^{1+\alpha}+o(x/(\log x)^{1+\alpha})$ for some $\delta'\in\R$, then $(\log(1/(1-q)))^{\alpha}F_S(q)\to\delta$ as $q\to1^-$.
\end{corollary}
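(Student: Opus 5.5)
We derive Corollary~\ref{cor:primesuff} from Theorem~\ref{thm:primes}: it suffices to verify condition (ii) of that theorem for every fixed $c>0$. Fix $c>0$, put $H=cx/\log x$, and write $A_S(x+H)-A_S(x)$ as the sum of three increments.
\begin{enumerate}
\item[(1)] The increment of $\delta\,\mathrm{li}_\alpha$ over $(x,x+H]$.
\item[(2)] The increment of $\delta' x/(\log x)^{1+\alpha}$ over the same interval.
\item[(3)] The increment of the error term $o(x/(\log x)^{1+\alpha})$.
\end{enumerate}

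\emph{Step (3).} This increment is a difference of two error values, one at $x+H$ and one at $x$. Since $x+H\sim x$ and $\log(x+H)\sim\log x$, we have
\[
\frac{x+H}{(\log(x+H))^{1+\alpha}}\asymp\frac{x}{(\log x)^{1+\alpha}},
\]
so both values, and hence their difference, are $o(x/(\log x)^{1+\alpha})$.

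\emph{Step (1).} We have
\[
\mathrm{li}_\alpha(x+H)-\mathrm{li}_\alpha(x)=\int_x^{x+H}(\log y)^{-\alpha}\dd y .
\]
For $y\in[x,x+H]$,
\[
\log y=\log x+O(1/\log x),
\]
so $(\log y)^{-\alpha}=(\log x)^{-\alpha}(1+O((\log x)^{-2}))$. The integral is therefore $H(\log x)^{-\alpha}(1+o(1))=cx/(\log x)^{1+\alpha}+o(x/(\log x)^{1+\alpha})$. After multiplying by $\delta$, this is exactly the main term required in (ii).

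\emph{Step (2).} Let $\phi(y)=y/(\log y)^{1+\alpha}$. Then
\[
\phi'(y)=(\log y)^{-1-\alpha}\bigl(1-(1+\alpha)/\log y\bigr)\ll(\log x)^{-1-\alpha}
\]
on $[x,x+H]$. By the mean value theorem, the increment of $\delta'\phi$ is
\[
O\bigl(H(\log x)^{-1-\alpha}\bigr)=O\bigl(x/(\log x)^{2+\alpha}\bigr)=o\bigl(x/(\log x)^{1+\alpha}\bigr).
\]

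Summing the three steps gives
\[
A_S(x+H)-A_S(x)=\frac{c\delta x}{(\log x)^{1+\alpha}}+o\Big(\frac{x}{(\log x)^{1+\alpha}}\Big)
\]
for every fixed $c>0$, which is condition (ii). Theorem~\ref{thm:primes} then yields $(\log(1/(1-q)))^{\alpha}F_S(q)\to\delta$.

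The only point needing care is uniformity in Step (3). Because $c$ is fixed, the comparability $x+H\asymp x$ holds with absolute constants, so the $o(\cdot)$ passes directly from $x+H$ back to $x$. Apart from this there is no real obstacle; the substance is entirely in Theorem~\ref{thm:primes}.
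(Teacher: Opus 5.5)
Your proposal is correct and follows the paper's proof exactly: you verify condition (ii) of Theorem~\ref{thm:primes} by estimating the increment of $\delta\,\mathrm{li}_\alpha$ over $(x,x+cx/\log x]$, bounding the increment of the $\delta' x/(\log x)^{1+\alpha}$ term by the mean value theorem, and absorbing the error term. Your bound $(\log y)^{-\alpha}=(\log x)^{-\alpha}(1+O((\log x)^{-2}))$ is in fact slightly sharper than the $O(1/\log x)$ the paper uses, though either suffices.
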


For $\alpha=1$ and $S=\mathbb P$, Theorem~\ref{thm:primes} states that the \emph{partition-theoretic prime number theorem} $\log(1/(1-q))F_{\mathbb P}(q)\to1$ is equivalent to $\pi(x+cx/\log x)-\pi(x)\sim cx/(\log x)^2$ for every $c>0$. Since $\pi(x)=\mathrm{li}(x)+O(x\exp(-c_1\sqrt{\log x}))$ for some $c_1>0$, Corollary~\ref{cor:primesuff} shows that both statements hold. The convergence is slow, with relative error of order $\log L_t/L_t$, because the Gumbel window sits at $x_t$, where $\log x_t=L_t+\log L_t$. For $\alpha=\frac12$ the corollary applies to the integers that are sums of two squares, whose counting function has the form $bx(\log x)^{-1/2}(1+c'(\log x)^{-1}+O((\log x)^{-2}))$ by Landau's method, as established in~\cite[(5), (8$'$)]{Shanks}; the limit is the Landau--Ramanujan constant $b$.

\begin{corollary}\label{cor:chebotarev}
Let $K/\mathbb Q$ be a finite Galois extension, let $C\subseteq\mathrm{Gal}(K/\mathbb Q)$ be a conjugacy class, and let $\mathbb P_C$ be the set of primes $p$ unramified in $K$ whose Frobenius class is $C$. Then
\[
\lim_{q\to1^-}\log\frac1{1-q}\cdot\Big(-\sum_{\substack{\lambda\in\cD\setminus\{\emptyset\}\\ \sm(\lambda)\in\mathbb P_C}}\muP(\lambda)q^{|\lambda|}\Big)=\frac{|C|}{[K:\mathbb Q]}.
\]
In particular, for coprime integers $\ell$ and $k\ge1$,
\[
\lim_{q\to1^-}\log\frac1{1-q}\cdot\Big(-\sum_{\substack{\lambda\in\cD\setminus\{\emptyset\},\ \sm(\lambda)\ \mathrm{prime}\\ \sm(\lambda)\equiv\ell\ (\mathrm{mod}\ k)}}\muP(\lambda)q^{|\lambda|}\Big)=\frac1{\varphi(k)} .
\]
\end{corollary}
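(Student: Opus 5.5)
We derive the statement from Corollary~\ref{cor:primesuff} with $\alpha=1$ and $\delta=|C|/[K:\mathbb Q]$; the second display is the special case $K=\mathbb Q(\zeta_k)$.

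\textbf{Step 1: the counting function of $\mathbb P_C$.} Write $S=\mathbb P_C$, so that the series inside the limit is exactly $F_S(q)$, and $A_S(x)=\pi_C(x)$ is the Chebotarev counting function. We need
\[
A_S(x)=\delta\,\mathrm{li}(x)+o\big(x/(\log x)^2\big),
\]
that is, the hypothesis of the corollary with $\delta'=0$. The unconditional effective Chebotarev density theorem of Lagarias and Odlyzko gives
\[
\pi_C(x)=\delta\,\mathrm{li}(x)+O_K\big(x\exp(-c_K\sqrt{\log x})\big)
\]
for $x$ large in terms of $K$. A possible exceptional (Siegel) zero depends only on $K$, so it is a fixed real number $\beta_0<1$. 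Its contribution is $O(x^{\beta_0})$, which is admissible since $K$ is fixed. The finitely many ramified primes alter $A_S$ by $O(1)$. Every such error is $o(x/(\log x)^2)$.

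\textbf{Step 2: apply the corollary.} Corollary~\ref{cor:primesuff} now yields $\log(1/(1-q))\,F_S(q)\to\delta$ as $q\to1^-$, which is the first claim.

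\textbf{Step 3: the arithmetic progression case.} Take $K=\mathbb Q(\zeta_k)$, so $\mathrm{Gal}(K/\mathbb Q)\cong(\Z/k\Z)^\times$ is abelian. The class $C=\{\ell\}$ is a single element, and for $p\nmid k$ its Frobenius is $p\bmod k$. Hence $\mathbb P_C$ is the set of primes $p\equiv\ell\pmod k$ with $p\nmid k$. Since $(\ell,k)=1$, the condition $p\nmid k$ is automatic, so $\mathbb P_C$ is exactly the set of primes $\equiv\ell\pmod k$. Then $|C|/[K:\mathbb Q]=1/\varphi(k)$, which gives the second display. Alternatively, one can apply the Siegel--Walfisz theorem directly, since $k$ is fixed.

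\textbf{Main obstacle.} The only step needing care is Step~1: we need a Chebotarev error term better than $x/(\log x)^2$, whereas the qualitative density theorem gives only $o(x/\log x)$. Citing the effective form with a fixed field settles this. If one prefers to avoid effective versions, a fallback is to verify condition~(ii) of Theorem~\ref{thm:primes} directly. That requires the short-interval asymptotic $\pi_C(x+cx/\log x)-\pi_C(x)\sim c\delta x/(\log x)^2$, which again follows from an error term $o(x/(\log x)^2)$, so the effective form is needed in either route.
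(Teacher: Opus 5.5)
Your proposal is correct and is essentially the paper's proof. The Lagarias--Odlyzko form of the Chebotarev density theorem gives $A_{\mathbb P_C}(x)=\frac{|C|}{[K:\mathbb Q]}\mathrm{li}(x)+O_K(x\exp(-c_K\sqrt{\log x}))$, so Corollary~\ref{cor:primesuff} applies with $\alpha=1$ and $\delta'=0$, and the progression case is $K=\mathbb Q(e^{2\pi i/k})$. One small slip: when $k\equiv2\pmod 4$ the prime $2$ is unramified in $\mathbb Q(e^{2\pi i/k})$ (for example, for $k=6$ its Frobenius is $\sigma_5$), so $\mathbb P_C$ need not be \emph{exactly} the set of primes $\equiv\ell\pmod k$. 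The discrepancy is a single prime, and, as the paper notes, finitely many primes do not affect the hypothesis.
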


This is the partition-theoretic counterpart of~\eqref{eq:alladi} and of its Chebotarev generalizations~\cite{Dawsey,SweetingWoo,KMS}: the least prime factor is replaced by the smallest part, $\mu$ by $\muP$, and the harmonic weight $1/n$ by the normalization $\log(1/(1-q))$.

\subsection{Approach off the real axis}
Let $\tau$ and $q_t=q_t(\beta)$ be as in Section~\ref{sec:dilog}. As $t\to0^+$, $|1-q_t|/(1-|q_t|)\to(1+\beta^2)^{1/2}$. Hence the path $\{q_t(\beta)\}$, a ray in the $\tau$-plane and a logarithmic spiral in the $q$-plane, lies eventually in the Stolz sector $|1-q|\le M_0(1-|q|)$ whenever $(1+\beta^2)^{1/2}<M_0$, and every Stolz sector with $M_0>1$ contains such paths with $\beta\neq0$. The identities of Lemma~\ref{lem:identities} hold for complex $|q|<1$, but the weights $\omega_n(q_t)$ are no longer positive. We use $\gamma$, $\mathrm{Cl}_2$ and $\kappa_\beta$ from~\eqref{eq:kappa}.

\begin{theorem}\label{thm:periodic}
Let $f$ be periodic modulo $m$, with $\widehat f(j)=\frac1m\sum_{r=1}^mf(r)e^{-2\pi ijr/m}$, and let
\[
\kappa^*=\max\big\{\kappa_\beta(2\pi j/m):\ 1\le j\le m-1,\ \widehat f(j)\ne0\big\}\qquad(\max\emptyset=-\infty).
\]
\begin{enumerate}
\item[(a)] If $\kappa^*<0$, then $\cA_f(q_t)\to\widehat f(0)=M(f)$ as $t\to0^+$.
\item[(b)] If $\kappa^*\ge0$, then $\limsup_{t\to0^+}t^{1/2}e^{-\kappa^*/t}\,|\cA_f(q_t)|>0$; in particular $\cA_f(q_t)$ is unbounded.
\end{enumerate}
\end{theorem}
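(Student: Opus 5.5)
Write $f(n)=\sum_{j=0}^{m-1}\widehat f(j)\zeta^{jn}$ with $\zeta=e^{2\pi i/m}$. This reduces the problem to the single-character sums $\cA_{\zeta^{jn}}(q)$, which can be evaluated in closed form.

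\emph{Step 1: closed form for one character.} By the third expression in Lemma~\ref{lem:identities} and Euler's identity with $z=aq$,
\[
\cA_{a^n}(q)=(q;q)_\infty\sum_{n\ge1}\frac{(aq)^n}{(q;q)_n}=(q;q)_\infty\Big(\frac{1}{(aq;q)_\infty}-1\Big)=\frac{(q;q)_\infty}{(aq;q)_\infty}-(q;q)_\infty
\]
for $|a|\le1$. Summing over $j$ then gives
\[
\cA_f(q)=\widehat f(0)\big(1-(q;q)_\infty\big)+\sum_{j=1}^{m-1}\widehat f(j)\Big(\frac{(q;q)_\infty}{(\zeta^jq;q)_\infty}-(q;q)_\infty\Big).
\]
The $j=0$ term is just~\eqref{eq:mass}.

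\emph{Step 2: sizes.} By~\eqref{eq:eta}, $|(q_t;q_t)_\infty|\ll t^{-1/2}\exp(-\mathrm{Re}(\pi^2/(6\tau)))$. Since $\mathrm{Re}(1/\tau)=1/(t(1+\beta^2))$, this tends to $0$. By Lemma~\ref{lem:poch}(b), each ratio with $a=e^{i\vartheta_j}$, $\vartheta_j=2\pi j/m$, has modulus $C_j t^{-1/2}e^{\kappa_\beta(\vartheta_j)/t}(1+o(1))$, where $C_j=(2\pi|1-e^{i\vartheta_j}|)^{1/2}(1+\beta^2)^{-1/4}>0$.

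For (a), every $j$ with $\widehat f(j)\neq0$ has $\kappa_\beta(\vartheta_j)\le\kappa^*<0$, so these terms tend to $0$, and $\cA_f(q_t)\to\widehat f(0)$. Since $f$ is periodic, $\widehat f(0)=M(f)$.

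\emph{Step 3: lower bound in (b).} Let $J$ be the set of $j$ with $\widehat f(j)\ne0$ and $\kappa_\beta(\vartheta_j)=\kappa^*$. All other terms, and the terms $\widehat f(0)(1-(q;q)_\infty)$ and $(q;q)_\infty\sum_j\widehat f(j)$, are $o(t^{-1/2}e^{\kappa^*/t})$ when $\kappa^*\ge0$; they are $O(1)$, while the main term is at least of order $t^{-1/2}$.

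The dominant part is then
\[
t^{-1/2}e^{\kappa^*/t}\sum_{j\in J}\widehat f(j)C_je^{i\phi_j(t)}(1+o(1)).
\]
By Lemma~\ref{lem:poch}(b), the phase is
\[
\phi_j(t)=-\mathrm{Im}\big((\pi^2/6-\mathrm{Li}_2(\zeta^j))/\tau\big)+\mathrm{const}_j.
\]
Using~\eqref{eq:Li2circle}, this equals $-\psi_\beta(\vartheta_j)/t$ up to sign convention, plus a constant.

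If $|J|=1$ we are done. In general, the main obstacle is cancellation among several $j\in J$. Group the $j\in J$ by the value of $\psi_\beta(\vartheta_j)$. If the maximal $\kappa$ and $\psi$ both coincide for $j\ne j'$, then $\kappa_\beta+i\psi_\beta$ agree, hence $\mathrm{Li}_2(\zeta^j)=\mathrm{Li}_2(\zeta^{j'})$. Comparing real parts gives $\gamma(\vartheta_j)=\gamma(\vartheta_{j'})$, which forces $\vartheta_{j'}=2\pi-\vartheta_j$. But then $\mathrm{Cl}_2$ has opposite signs, and these are nonzero unless $\vartheta=\pi$. So the values $\psi_\beta(\vartheta_j)$ are distinct, and we get a sum $P(s)=\sum_k b_ke^{i\lambda_ks}$ with distinct real frequencies $\lambda_k$, $b_k\neq0$, and $s=1/t$.

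For such a nonzero almost periodic polynomial, $\limsup_{s\to\infty}|P(s)|>0$, since its mean square $\lim_{T\to\infty}\frac1T\int_T^{2T}|P|^2=\sum|b_k|^2>0$. This limsup, carried over from $s$ to $t$, gives (b). The $(1+o(1))$ factors are harmless because they multiply bounded terms. Unboundedness of $\cA_f(q_t)$ follows since $t^{-1/2}e^{\kappa^*/t}\to\infty$ when $\kappa^*\ge0$.
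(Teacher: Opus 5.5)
Your proposal is correct and follows the paper's proof essentially step for step: Euler's identity reduces $\cA_f$ to the ratios $(q;q)_\infty/(\zeta^jq;q)_\infty$, Lemma~\ref{lem:poch}(b) and~\eqref{eq:eta} give their sizes, distinctness of the frequencies $\psi_\beta(\vartheta_j)$ at the maximal $\kappa$ follows from $\gamma$ and $\mathrm{Cl}_2$, and the mean square of a trigonometric polynomial gives a positive $\limsup$. There are two cosmetic slips, neither of which affects the argument: the phase is $+\psi_\beta(\vartheta_j)/t$ (the sign is irrelevant), and the non-maximal terms with $0\le\kappa_\beta(\vartheta_j)<\kappa^*$ are not $O(1)$ but are still $o(t^{-1/2}e^{\kappa^*/t})$ because of the factor $e^{(\kappa_\beta(\vartheta_j)-\kappa^*)/t}$.
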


Since $\kappa_0<0$ on $(0,2\pi)$, part~(a) always applies for $\beta=0$, in accordance with Corollary~\ref{cor:consequences}(a). For residue classes the criterion becomes explicit.

\begin{corollary}\label{cor:AP}
For $m\ge3$ let
\[
\beta_m=\min_{1\le j<m/2}\frac{\gamma(2\pi j/m)}{\mathrm{Cl}_2(2\pi j/m)} .
\]
Let $r\in\Z$ and $S_{r,m}=\{n\ge1:\ n\equiv r\ (\mathrm{mod}\ m)\}$. If $|\beta|<\beta_m$, then $F_{S_{r,m}}(q_t)\to1/m$; if $|\beta|\ge\beta_m$, then $F_{S_{r,m}}(q_t)$ is unbounded as $t\to0^+$. For $m\le2$, $F_{S_{r,m}}(q_t)\to1/m$ for every $\beta$. Moreover
\[
\beta_m\le\frac{\pi}{2\,(1+\log(m/2\pi))}\longrightarrow0\qquad(m\to\infty),
\]
and
\[
\begin{aligned}
\beta_3&=\frac{2\pi^2}{9\,\mathrm{Cl}_2(2\pi/3)}=3.24143587\ldots,\qquad \beta_4=\frac{3\pi^2}{16\,G}=2.02032787\ldots,\\
\beta_6&=\frac{5\pi^2}{36\,\mathrm{Cl}_2(\pi/3)}=1.35059828\ldots,
\end{aligned}
\]
where $G$ is Catalan's constant.
\end{corollary}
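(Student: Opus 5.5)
The plan is to apply Theorem~\ref{thm:periodic} to $f=\mathbf 1_{S_{r,m}}$, so that $F_{S_{r,m}}=\cA_f$. The Fourier coefficients are
\[
\widehat f(j)=\frac1m e^{-2\pi ijr/m},
\]
which is nonzero for every $j$, and $\widehat f(0)=1/m$. Hence $\kappa^*$ is simply the maximum of $\kappa_\beta(2\pi j/m)$ over all $1\le j\le m-1$. Since $1+\beta^2>0$, we have $\kappa_\beta(\vartheta)<0$ if and only if $\beta\,\mathrm{Cl}_2(\vartheta)<\gamma(\vartheta)$. The whole corollary then reduces to identifying when all these inequalities hold, plus some explicit evaluation.

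\emph{Symmetry step.} Pair $j$ with $m-j$. Since $\gamma(2\pi-\vartheta)=\gamma(\vartheta)$ and $\mathrm{Cl}_2(2\pi-\vartheta)=-\mathrm{Cl}_2(\vartheta)$, the two conditions for $\vartheta=2\pi j/m$ with $1\le j<m/2$ combine to
\[
|\beta|\,\mathrm{Cl}_2(\vartheta)<\gamma(\vartheta).
\]
Here $\mathrm{Cl}_2(\vartheta)>0$ by~\eqref{eq:clausen}, because $0<\vartheta<\pi$. If $m$ is even, the index $j=m/2$ gives $\vartheta=\pi$, where $\mathrm{Cl}_2(\pi)=0$ and $\gamma(\pi)>0$, so $\kappa_\beta(\pi)<0$ for every $\beta$. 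Consequently $\kappa^*<0$ if and only if $|\beta|<\gamma(\vartheta)/\mathrm{Cl}_2(\vartheta)$ for all $\vartheta=2\pi j/m$ with $1\le j<m/2$, that is, if and only if $|\beta|<\beta_m$. Otherwise $\kappa^*\ge0$.

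Theorem~\ref{thm:periodic}(a) then gives $F_{S_{r,m}}(q_t)\to1/m$ when $|\beta|<\beta_m$, and part~(b) gives unboundedness when $|\beta|\ge\beta_m$. For $m\le2$, the only nonzero frequency is $\vartheta=\pi$ (or there is none when $m=1$). Thus $\kappa^*<0$ for every $\beta$ and the limit is $1/m$.

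\emph{Upper bound for $\beta_m$.} Take $j=1$ and $\vartheta=2\pi/m$. The numerator satisfies $\gamma(\vartheta)\le\pi\vartheta/2$. For the denominator, $2\sin(x/2)\le x$ inserted into~\eqref{eq:clausen} gives
\[
\mathrm{Cl}_2(\vartheta)\ge-\int_0^\vartheta\log x\dd x=\vartheta\,(1-\log\vartheta)=\vartheta\,\bigl(1+\log(m/2\pi)\bigr).
\]
This lower bound is positive for $m\ge3$, since $3/(2\pi)>e^{-1}$. Dividing yields
\[
\beta_m\le\frac{\pi}{2\,(1+\log(m/2\pi))}\longrightarrow0 .
\]

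\emph{Explicit values.} For $m=3$ only $j=1$ occurs, and $\gamma(2\pi/3)=2\pi^2/9$. For $m=4$ only $j=1$ occurs, with $\gamma(\pi/2)=3\pi^2/16$ and $\mathrm{Cl}_2(\pi/2)=G$.

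For $m=6$ there are two candidates, $j=1$ and $j=2$, and we must show $j=1$ gives the minimum. The first candidate is
\[
\frac{\gamma(\pi/3)}{\mathrm{Cl}_2(\pi/3)}=\frac{5\pi^2}{36\,\mathrm{Cl}_2(\pi/3)} .
\]
For the second, the duplication formula $\mathrm{Cl}_2(2\vartheta)=2\mathrm{Cl}_2(\vartheta)-2\mathrm{Cl}_2(\pi-\vartheta)$ at $\vartheta=\pi/3$ gives $\mathrm{Cl}_2(2\pi/3)=\tfrac23\mathrm{Cl}_2(\pi/3)$. The $j=2$ ratio is therefore
\[
\frac{\gamma(2\pi/3)}{\mathrm{Cl}_2(2\pi/3)}=\frac{\pi^2}{3\,\mathrm{Cl}_2(\pi/3)}=\frac{12\pi^2}{36\,\mathrm{Cl}_2(\pi/3)},
\]
which is larger than the $j=1$ ratio. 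So the minimum is at $j=1$.

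The decimal values follow from the known constants $\mathrm{Cl}_2(\pi/3)=1.01494\ldots$, $\mathrm{Cl}_2(2\pi/3)=\tfrac23\mathrm{Cl}_2(\pi/3)$ and $G=0.91596\ldots$. The only mildly delicate points are the symmetry reduction to $1\le j<m/2$, including the harmless frequency $\vartheta=\pi$, and the duplication identity needed to compare the two candidates when $m=6$.
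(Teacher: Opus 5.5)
Your proposal is correct and follows the paper's proof essentially step for step: all Fourier coefficients are nonzero, $j$ is paired with $m-j$ via the symmetries of $\gamma$ and $\mathrm{Cl}_2$, the frequency $\vartheta=\pi$ is harmless, and the bound on $\beta_m$ comes from $2\sin(x/2)\le x$ in~\eqref{eq:clausen}. The only difference is at $m=6$, where you compare the $j=1$ and $j=2$ ratios exactly via the duplication identity $\mathrm{Cl}_2(2\pi/3)=\tfrac23\mathrm{Cl}_2(\pi/3)$, giving $5$ versus $12$ in units of $\pi^2/(36\,\mathrm{Cl}_2(\pi/3))$, while the paper compares the decimals $1.3506\ldots<3.2414\ldots$; your exact comparison, and your check that $1+\log(m/2\pi)>0$ for $m\ge3$, are small gains in rigor.
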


Thus for every $\beta\ne0$ the density formula fails along $q_t(\beta)$ for all residue classes modulo every sufficiently large $m$. The ratio in~\cite[Lemma~2.5]{OSW1} is the case of a single frequency: by Lemma~\ref{lem:poch}(b), $|(q_t;q_t)_\infty/(e^{i\vartheta}q_t;q_t)_\infty|$ tends to $0$ if $\kappa_\beta(\vartheta)<0$ and to $\infty$ if $\kappa_\beta(\vartheta)\ge0$.

Arithmetic structure is not the source of the instability. Off the real axis the largest weights become exponentially large, and random sets of bounded discrepancy fail.

\begin{theorem}\label{thm:instability}
For $\beta\in\R$ put
\[
\Lambda_\beta(s)=-\mathrm{Re}\,\frac{\mathrm{Li}_2\big(e^{-s(1+i\beta)}\big)}{1+i\beta}\quad(s\ge0),\qquad \Psi_\beta=\sup_{s>0}\Lambda_\beta(s),
\]
Let $\beta\ne0$.
\begin{enumerate}
\item[(a)] $\Psi_\beta\ge(e^{-s_\beta}-e^{-2s_\beta})(1+\beta^2)^{-1/2}>0$ with $s_\beta=(\pi-\arctan|\beta|)/|\beta|$, and
\[
\max_{n\ge1}|\omega_n(q_t)|=\exp\Big(\frac{\Psi_\beta}{t}+O_\beta\big(\log(2/t)\big)\Big)\qquad(0<t\le1).
\]
\item[(b)] For each $k\ge1$ choose, independently and with probability $\frac12$ each, one element of $\{2k-1,2k\}$, and let $S$ be the resulting random set. Then every outcome satisfies $|A_S(x)-x/2|\le1$ for all $x$, and hence $F_S(q)\to\frac12$ as $q\to1^-$. Almost surely, $\limsup_{t\to0^+}|F_S(q_t(\beta))|=\infty$ simultaneously for all $\beta$ in any prescribed countable subset of $\R\setminus\{0\}$.
\end{enumerate}
\end{theorem}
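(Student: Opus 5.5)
The plan is to get (a) from the uniform product asymptotics of Lemma~\ref{lem:poch}(a), and (b) from a conditioning argument combined with a zero--one law.

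\emph{Part (a), the asymptotic.} Write $s=nt$, so that $q_t^n=e^{-s(1+i\beta)}$ and $\omega_n(q_t)=q_t^n\,(aq_t;q_t)_\infty$ with $a=q_t^n$, $|a|\le1$. Lemma~\ref{lem:poch}(a) applies uniformly in $n$ and gives
\[
\log|\omega_n(q_t)|=-nt-\mathrm{Re}\frac{\mathrm{Li}_2(e^{-nt(1+i\beta)})}{t(1+i\beta)}+O_\beta(\log(2/t))=\frac{\Lambda_\beta(nt)}{t}-nt+O_\beta(\log(2/t)).
\]
\begin{itemize}
\item The upper bound follows at once from $\Lambda_\beta\le\Psi_\beta$ and $-nt\le0$.
\item For the lower bound I first show that $\Psi_\beta$ is attained at some $s^*\in(0,\infty)$. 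This holds because $\Lambda_\beta$ is continuous on $[0,\infty)$, $\Lambda_\beta(0)=-\frac{\pi^2}{6}\cdot\frac{1}{1+\beta^2}<0$, and $\Lambda_\beta(s)\to0$ as $s\to\infty$, while $\Psi_\beta>0$ by the estimate below.
\item Near $s^*$ the derivative $\Lambda_\beta'(s)=\mathrm{Re}\,\mathrm{Log}(1-e^{-s(1+i\beta)})$ is bounded. Hence choosing $n=\lceil s^*/t\rceil$ gives $\Lambda_\beta(nt)=\Psi_\beta+O(t)$.
\item With this $n$ the loss is $O(1)$ from the $\Lambda_\beta$ term and $-nt=O(1)$, both absorbed in $O(\log(2/t))$.
\end{itemize}

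\emph{Part (a), the lower bound for $\Psi_\beta$.} Expand $\Lambda_\beta(s)=-\sum_{r\ge1}\mathrm{Re}\big(e^{-rs(1+i\beta)}/(1+i\beta)\big)/r^2$ and use the symmetry $\beta\mapsto-\beta$ to assume $\beta>0$. The $r=1$ term equals $-e^{-s}\cos(s\beta+\arctan\beta)/\sqrt{1+\beta^2}$. At $s=s_\beta$ we have $s\beta+\arctan\beta=\pi$, so this term is $e^{-s_\beta}/\sqrt{1+\beta^2}$. The terms with $r\ge2$ are bounded in absolute value by $e^{-2s}(1+\beta^2)^{-1/2}\sum_{r\ge2}r^{-2}<e^{-2s}(1+\beta^2)^{-1/2}$. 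This gives the stated lower bound, which is positive since $e^{-s_\beta}>e^{-2s_\beta}$.

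\emph{Part (b), the deterministic statements.} Every outcome has exactly $k$ elements in $[1,2k]$, so $|A_S(x)-x/2|\le1$. Corollary~\ref{cor:consequences}(a) then gives $F_S(q)\to\frac12$ radially.

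\emph{Part (b), the random statement.} Write $\eps_k=\pm1$ according to the choice in the $k$-th pair. Then
\[
F_S(q)=\sum_k\tfrac12\big(\omega_{2k-1}(q)+\omega_{2k}(q)\big)+\sum_k\tfrac{\eps_k}{2}\,a_k(q),\qquad a_k=\omega_{2k-1}-\omega_{2k}.
\]
The first sum is $\cA_{1/2}(q)$, which tends to $\tfrac12$ along $q_t(\beta)$ by Theorem~\ref{thm:periodic} with $m=1$. For the second sum, note that
\[
a_k=q^{2k-1}(q^{2k+1};q)_\infty\,(1-q-q^{2k}).
\]
Take $k$ with $2kt$ near $s^*$. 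There $|1-q_t-q_t^{2k}|\to|1-e^{-s^*(1+i\beta)}|>0$, so part (a) gives $|a_k|\ge\exp(\Psi_\beta/t-C\log(2/t))=:R_t\to\infty$. Condition on all $\eps_j$ with $j\ne k$. The two values of $\eps_k$ change $F_S(q_t)$ by $|a_k|$, so at least one of them gives $|F_S(q_t)|\ge R_t/2-O(1)$. Hence $\mathbb P(|F_S(q_t)|\ge R_t/3)\ge\frac12$ for small $t$. Fix a sequence $t_j\to0$. By reverse Fatou, $\mathbb P(\limsup_t|F_S(q_t)|=\infty)\ge\frac12$.

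\emph{Upgrading to probability one.} The event $\{\limsup_t|F_S(q_t)|=\infty\}$ is a tail event. Changing finitely many $\eps_k$ alters $F_S(q_t)$ by finitely many $\omega_n(q_t)$ with $n$ fixed, and each such term tends to $0$ as $t\to0$. Indeed, $|(q_t;q_t)_\infty|$ decays like $\exp(-\pi^2/(6(1+\beta^2)t))$ up to a power of $t$, by Lemma~\ref{lem:poch}(a), while $|1/(q_t;q_t)_n|\ll t^{-n}$. Kolmogorov's zero--one law then gives probability $1$ for each $\beta$, and a countable intersection handles a prescribed countable set of $\beta$.

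The main obstacle I expect is the uniformity in $n$ of the Lemma~\ref{lem:poch}(a) estimate when $a=q_t^n$ ranges over the whole disk. I also need to check that the $O(\log(2/t))$ errors do not swamp the factor $|1-q-q^{2k}|$. The latter is harmless because that factor is bounded below near $s^*$.
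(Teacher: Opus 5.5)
Your proof is correct. Part (a) follows the paper essentially verbatim: the first-term bound $|\mathrm{Li}_2(z)-z|\le|z|^2$ at $s_\beta$, attainment of $\Psi_\beta$ at some $s^*>0$, and $n=\lceil s^*/t\rceil$ in the uniform estimate from Lemma~\ref{lem:poch}(a).

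For the almost-sure statement in (b) you take a genuinely different route.

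\emph{The paper's route.} It finds $\gg 1/t$ indices $k$ with $|\Delta_k(q_t)|\ge R$ and pigeonholes them into one of four directions. The Erd\H{o}s--Littlewood--Offord inequality then gives $\Pr(|F_S(q_t)|\le M)\le|V_t'|^{-1/2}\to0$ for every fixed $M$. Continuity in $t$ upgrades this directly to $\sup_{0<t<t_0}|F_S(q_t)|=\infty$ almost surely, with no zero--one law.

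\emph{Your route.} You use a single large coefficient $a_k$, which gives only the trivial anti-concentration bound $\Pr(|F_S(q_t)|\ge|a_k|/2)\ge\frac12$. You then pass to $\Pr(\limsup_t|F_S(q_t)|=\infty)\ge\frac12$ by reverse Fatou along a sequence $t_j\to0$, and upgrade to $1$ by Kolmogorov's zero--one law. The tail property rests on $\omega_n(q_t)\to0$ for each fixed $n$. You justify this correctly via $|(q_t;q_t)_\infty|=\exp(-\pi^2/(6(1+\beta^2)t)+O(\log(2/t)))$ and $|(q_t;q_t)_n|^{-1}\ll_n t^{-n}$.

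\emph{Comparison.} Your probabilistic step is more elementary: no Littlewood--Offord, no angular pigeonholing. It needs the extra tail-event verification instead. The paper's argument proves more: at every small $t$, $|F_S(q_t)|$ exceeds any fixed $M$ with probability $1-O(t^{1/2})$, so $|F_S(q_t)|\to\infty$ in probability. Yours yields only largeness along some subsequence almost surely, which is all the theorem asks. To apply the zero--one law you should say why the event is measurable, for instance because $t\mapsto F_S(q_t)$ is continuous, so the $\limsup$ may be taken over rational $t$.

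Two harmless slips:
\begin{enumerate}
\item $\Lambda_\beta'(s)=-\log|1-e^{-s(1+i\beta)}|$; you have the opposite sign.
\item Since $1-q_t\to0$, $|1-q_t-q_t^{2k}|\to e^{-s^*}$, not $|1-e^{-s^*(1+i\beta)}|$.
\end{enumerate}
You use only boundedness of $\Lambda_\beta'$ near $s^*$ and positivity of this limit, so neither slip affects the argument.
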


For $\beta=0$ one has $\Lambda_0(s)=-\mathrm{Li}_2(e^{-s})<0$, and the weights are the positive Gumbel weights of Lemma~\ref{lem:gumbel}. On the real axis every set of bounded discrepancy satisfies the formula, by Corollary~\ref{cor:consequences}(a). Off the real axis bounded discrepancy decides nothing: the even numbers satisfy $F_S(q_t)\to\frac12$ for every $\beta$ by Corollary~\ref{cor:AP}, whereas the random sets of Theorem~\ref{thm:instability}(b), with the same discrepancy, do not. Convergence then depends on the fine structure of $S$ and not only on its counting function.

\begin{corollary}\label{cor:stolz}
In every Stolz sector at $1$ with $M_0>1$ there are paths along which $F_S(q)$ is unbounded for some arithmetic progression $S$, and for almost every random set $S$ as in Theorem~\ref{thm:instability}(b). In particular, the Stolz-sector statements~\cite[Theorem~1.1 and Corollaries~1.2 and~1.3]{Schneider} fail already for periodic functions and for sets of bounded discrepancy, and the limits in~\cite[Lemma~2.5 and Theorem~1.3]{OSW1}, for $q\to1$, do not hold for unrestricted approach from within the unit disk.
\end{corollary}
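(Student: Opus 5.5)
The statement to prove is Corollary~\ref{cor:stolz}. Its proof combines Corollary~\ref{cor:AP} and Theorem~\ref{thm:instability}(b) with the geometric remark made before Theorem~\ref{thm:periodic}: along $q_t(\beta)=e^{-t(1+i\beta)}$ one has $|1-q_t|/(1-|q_t|)\to(1+\beta^2)^{1/2}$ as $t\to0^+$.

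\emph{Step 1: the path.} Fix a Stolz sector with $M_0>1$ and choose $\beta\ne0$ with $(1+\beta^2)^{1/2}<M_0$. By the remark above, the tail $\{q_t(\beta):0<t<t_0\}$ of the spiral lies in the sector and tends to $1$ from within the unit disk.

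\emph{Step 2: an arithmetic progression.} Since $\beta_m\to0$ by the bound $\beta_m\le\pi/(2(1+\log(m/2\pi)))$ in Corollary~\ref{cor:AP}, we can pick $m\ge3$ with $\beta_m\le|\beta|$. Corollary~\ref{cor:AP} then shows that $F_{S_{r,m}}(q_t)$ is unbounded for every residue class $r$. (For explicit choices, $|\beta|\ge\beta_3$ lets us take $m=3$, or $m=4$ works once $|\beta|\ge 3\pi^2/(16G)$. The general $m$ covers sectors with $M_0$ close to $1$.)

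\emph{Step 3: random sets.} Apply Theorem~\ref{thm:instability}(b) with the countable set $\{\beta_k\}$, where $\beta_k=1/k$. Almost surely $F_S(q_t(\beta_k))$ is unbounded for all $k$ simultaneously. Every sector with $M_0>1$ contains the path for $\beta_k$ once $k$ is large, so a single null set serves all sectors at once.

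\emph{Step 4: consequences for the cited papers.} The periodic function $f=\mathbf 1_{S_{r,m}}$ has mean value $1/m$, so Schneider's Theorem~1.1 and Corollaries~1.2--1.3 predict that $\cA_f(q)\to1/m$ throughout the Stolz sector. This contradicts Step 2, and Step 3 gives the same contradiction for sets of bounded discrepancy, which have mean $\tfrac12$. For~\cite{OSW1}, the final clause of Corollary~\ref{cor:AP} shows that the limit of Theorem~1.3 fails along such paths. The ratio in Lemma~2.5 there is handled by Lemma~\ref{lem:poch}(b): whenever $\kappa_\beta(\vartheta)\ge0$ the ratio tends to $\infty$ rather than to $0$, and this happens at $\vartheta=2\pi j/m$ precisely when $|\beta|\ge\gamma/\mathrm{Cl}_2$.

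\emph{Expected obstacle.} The only delicate point is matching the precise formulations of the cited statements: the approach allowed there, and whether Lemma~2.5 is stated for a single root of unity. The mathematical content itself reduces entirely to the earlier results.
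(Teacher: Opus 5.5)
Your proposal is correct and follows essentially the same route as the paper: choose $\beta\neq0$ with $(1+\beta^2)^{1/2}<M_0$, invoke Corollary~\ref{cor:AP} for large $m$ and Theorem~\ref{thm:instability}(b) for the random sets, and use Lemma~\ref{lem:poch}(b) for the ratio in~\cite[Lemma~2.5]{OSW1}; your choice $\beta_k=1/k$, giving one null set for all sectors at once, is a harmless strengthening. One small correction: for $\vartheta=2\pi j/m$ with $j<m/2$, the condition $\kappa_\beta(\vartheta)\ge0$ holds if and only if $\beta$ itself, not $|\beta|$, is at least $\gamma(\vartheta)/\mathrm{Cl}_2(\vartheta)$, so for $\beta<0$ you should take $a=m-j$ instead, which is still allowed since $m\nmid a$.
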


\section{Proofs}\label{sec:proofs}

\subsection{The Gumbel approximation}

\begin{lemma}\label{lem:gumbel}
For $0<t\le1$ and $q=e^{-t}$,
\[
\sum_{n\ge1}\Big|\omega_n(q)-t\,g\big(nt-L_t\big)\Big|\le 8t .
\]
\end{lemma}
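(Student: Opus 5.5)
The plan is to compare cumulative quantities first and differences afterwards. For $q=e^{-t}$ put $P_n=(q^{n+1};q)_\infty$ and $G_n=\mathcal G(nt-L_t)=\exp(-e^{-nt}/t)$. The proof of Lemma~\ref{lem:identities} gives $\omega_n(q)=P_n-P_{n-1}$. Also $t\,g(nt-L_t)=e^{-nt}\exp(-e^{-nt}/t)$, while
\[
G_n-G_{n-1}=\int_{(n-1)t-L_t}^{nt-L_t}g .
\]
I split the sum by the triangle inequality:
\[
\sum_{n\ge1}\big|\omega_n-t\,g(nt-L_t)\big|\le\sum_{n\ge1}\big|\Delta(P_n-G_n)\big|+\sum_{n\ge1}\Big|t\,g(nt-L_t)-\int_{(n-1)t-L_t}^{nt-L_t}g\Big| .
\]
Here $\Delta$ is the backward difference, with $P_0=(q;q)_\infty$ and $G_0=\exp(-1/t)$.

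The second (discretization) sum is routine. Each summand is at most $t\int_{\text{cell}}|g'|$, so by~\eqref{eq:gprime} the whole sum is at most $2t/e$.

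The first sum is the total variation of $D_n:=P_n-G_n$ over $n\ge0$, plus at most the boundary term $|D_0|$. For $D_0$ note $P_0\le 1-q\le t$ and $G_0\le e^{-1/t}\le t$. To control $D_n$, write $D_n=G_n(e^{E_n}-1)$ with $E_n=\log P_n+e^{-nt}/t$. Expanding the logarithm gives $-\log P_n=\sum_{r\ge1}q^{(n+1)r}/(r(1-q^r))$. Separating the term $r=1$ and setting $y=y_n=e^{-nt}/t$, this yields
\[
E_n=e^{-nt}\Big(\frac1t-\frac1{e^t-1}\Big)-\sum_{r\ge2}\frac{e^{-(n+1)rt}}{r(1-e^{-rt})}.
\]
The first term lies in $[0,ty/2]$. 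The second is nonpositive and bounded by $\sum_{r\ge2}t^{r-1}y^r/r^2\le t y^2$, using $1-e^{-rt}\ge rt\,e^{-rt}$ and absorbing the $e^{-rt}$ factors via $e^{-(n+1)rt}e^{rt}=e^{-nrt}$. Hence $|D_n|\le t(y+y^2)e^{-y}e^{ty/2}$ when $E_n\ge0$. When $E_n<0$ we have $|D_n|\le \min(G_n,\,G_n|E_n|)\le t(y+y^2)e^{-y}$. In both regimes this gives $\sup_n|D_n|\le Ct$ with an explicit small $C$. The factor $e^{ty/2}$ is harmless because for $ty\ge1$ one has $G_n\le e^{-1/t}$, and $P_n\le P_0\le t$ there.

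The main obstacle is upgrading the sup bound to a variation bound of the same order. I would do this by showing that $n\mapsto D_n$ changes monotonicity only boundedly often, say at most twice. Then the total variation is at most $4\sup|D_n|+|D_0|$. To prove this I would view $D$ as the restriction to the lattice of a function of the continuous variable $y\in(0,1/t]$. Its derivative in $y$ is controlled by the explicit expression for $E$ above, which is a sum of a term that is increasing and one that is decreasing in $y$, each multiplied by the unimodal factor $e^{-y}$.

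If sign counting becomes messy, the fallback is to bound $\sum|\Delta D_n|$ directly by $\sum_n|D(y_n)-D(y_{n-1})|\le\int|D'(y)|\dd y$. Since $D'=O(t(1+y^2)e^{-y/2})$, this integral is again $O(t)$. The constants would then be tallied: $2/e$ from discretization, about $2$ from the boundary terms, and the remainder from the variation of $D$, aiming for a total of at most $8t$.
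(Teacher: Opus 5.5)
Your decomposition is valid and the route can be completed, but it is a detour, and the step you identify as the main obstacle is left unproved. The paper compares $\omega_n$ and $t\,g(nt-L_t)$ pointwise, and your own notation shows why that suffices. Since $\omega_n=q^nP_n=e^{-nt}P_n$ and $t\,g(nt-L_t)=e^{-nt}G_n$ carry the same prefactor,
\[
\omega_n(q)-t\,g(nt-L_t)=e^{-nt}D_n=t\,y_n\,D_n .
\]
Your bounds $-ty^2\le E_n\le ty/2$ give $|D_n|\le e^{1/2}t(\frac{y_n}{2}+y_n^2)e^{-y_n}$. The factor $e^{ty/2}$ is at most $e^{1/2}$ simply because $ty_n=e^{-nt}\le1$. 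Hence $|\omega_n-t\,g(nt-L_t)|\le e^{1/2}t^2p(y_n)$ with $p(s)=s^2(\frac12+s)e^{-s}$. Bound the sum of this unimodal expression by $\int_0^\infty t^2p(s(x))\dd x+\max$, where $s(x)=e^{-xt}/t$ and the integral equals $t\int_0^{1/t}p(s)\,\dd s/s$. This gives $e^{1/2}(\frac52t+2t^2)<8t$. That is exactly the paper's proof, with your $E_n$ equal to its $h-H$. No discretization term and no variation estimate are needed.

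As written, your variation step does not yet prove the lemma. The claim that $D_n$ changes monotonicity at most twice is unjustified.

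The fallback is sound in principle, since $D$ extends to a $C^1$ function of $y\in(0,1/t]$ and $\sum_{n\ge1}|D(y_n)-D(y_{n-1})|\le\int_0^{1/t}|D'|$. However, it needs a bound on $\dd E/\dd y$ from term-by-term differentiation of the $r$-series. Near $ty=1$ that derivative can be of order $\log(2/t)$, which must be absorbed by $e^{-y}\le e^{-1/(2t)}$. The bound $D'=O(t(1+y^2)e^{-y/2})$ is true, but integrating it gives $18t$ even with implied constant $1$. Reaching the stated constant $8$ this way would need much finer bookkeeping than you indicate.

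Two smaller slips. First, $P_n$ increases with $n$, so ``$P_n\le P_0$'' is backwards; the regime $ty\ge1$ is only $n=0$ anyway. Second, $\sum_{n\ge1}|\Delta D_n|$ is exactly the total variation of $(D_n)_{n\ge0}$, so no separate $|D_0|$ term arises.

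What your route does buy is the Kolmogorov-distance bound $\sup_n|P_n-G_n|=O(t)$ between the law of the largest part and the Gumbel law. For the total-variation statement of the lemma, however, the common factor $e^{-nt}$ makes the detour unnecessary.
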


\begin{proof}
Fix $n\ge1$ and put $y=e^{-nt}\in(0,1)$. Since $te^{L_t}=1$,
\[
t\,g(nt-L_t)=t\,e^{-nt+L_t}\exp\!\big(-e^{-nt+L_t}\big)=y\,e^{-h},\qquad h:=\frac yt,
\]
while $\omega_n(q)=y\,e^{-H}$ with
\[
H:=-\log(q^{n+1};q)_\infty=\sum_{m>n}\sum_{k\ge1}\frac{q^{km}}{k}=\sum_{k\ge1}\frac1k\,\frac{e^{-knt}}{e^{kt}-1}.
\]
The term $k=1$ equals $y/(e^t-1)$, and $0<\frac1t-\frac1{e^t-1}<\frac12$ for $t>0$. For $k\ge2$ we use $e^{kt}-1\ge kt$. Hence
\[
-\frac y2\le H-h\le\frac1t\sum_{k\ge2}\frac{y^k}{k^2}\le\frac{y^2}{t}.
\]
In particular $\min(H,h)\ge h-\frac12$. By the mean value theorem, $|e^{-H}-e^{-h}|\le e^{-\min(H,h)}|H-h|$, and therefore
\[
\big|\omega_n(q)-t\,g(nt-L_t)\big|\le e^{1/2}\,y\,e^{-y/t}\Big(\frac y2+\frac{y^2}t\Big)=e^{1/2}\,t^2\,p(s_n),
\]
where $s_n=e^{-nt}/t$ and $p(s)=s^2(\frac12+s)e^{-s}$. The logarithmic derivative $2/s+1/(\frac12+s)-1$ of $p$ is strictly decreasing, so $p$ is unimodal on $(0,\infty)$, and $\max p\le2$. As $x\mapsto s(x)=e^{-xt}/t$ is decreasing, $Q(x):=t^2p(s(x))$ is unimodal on $[0,\infty)$. For a nonnegative unimodal function, $\sum_{n\ge1}Q(n)\le\int_0^\infty Q+\max Q$. The substitution $s=s(x)$, $\dd x=-\dd s/(ts)$, gives
\[
\int_0^\infty Q(x)\dd x=t\int_0^{1/t}\frac{p(s)}{s}\dd s\le t\int_0^\infty s\Big(\frac12+s\Big)e^{-s}\dd s=\frac{5t}{2}.
\]
Hence the sum in the lemma is at most $e^{1/2}(\frac52t+2t^2)\le\frac92e^{1/2}t<8t$. The constant $8$ is not optimal; the arguments below use only the order $O(t)$.
\end{proof}

\subsection{Reduction to a convolution}

\begin{lemma}\label{lem:reduction}
There is a function $\eta:(0,1/e)\to[0,\infty)$ with $\eta(t)\to0$ as $t\to0^+$ such that, for every bounded $f$ and every $t\in(0,1/e)$,
\[
\big|\cA_f(e^{-t})-J_f(w(t))\big|\le\eta(t)\,\norm f .
\]
\end{lemma}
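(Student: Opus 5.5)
The plan is to pass from $\cA_f(e^{-t})$ to $J_f(w(t))$ in three steps, each costing at most $o(1)\norm f$ uniformly in $f$: discrete Gumbel sum, then continuous Gumbel integral, then the integral~\eqref{eq:Jx}. Put $L=L_t$.

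\emph{Step 1.} By Lemma~\ref{lem:gumbel},
\[
\Big|\cA_f(e^{-t})-\sum_{n\ge1}f(n)\,t\,g(nt-L)\Big|\le 8t\norm f .
\]

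\emph{Step 2.} Since $\psi_f(x)=f(n)$ on $(n-1,n]$, the sum equals $\int_0^\infty\psi_f(x)\,t\,g(xt-L)\dd x$ up to the error
\[
\sum_n|f(n)|\int_{n-1}^n t\,\big|g(nt-L)-g(xt-L)\big|\dd x .
\]
For $x\in(n-1,n]$ we have $|g(nt-L)-g(xt-L)|\le\int_{n-1}^n t\,|g'(yt-L)|\dd y$. Summing over $n$, the error is at most $\norm f\, t\int_\R|g'|=2t\norm f/e$ by~\eqref{eq:gprime}. Dropping the range $(0,1)$ of integration costs at most $t\max g\,\norm f=t\norm f/e$. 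We are left with
\[
I_1=\int_1^\infty\psi_f(x)\,t\,g(xt-L)\dd x .
\]

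\emph{Step 3 (the main point).} We must show that $I_1-J_f(w(t))$, with $J_f(w(t))$ in the form~\eqref{eq:Jx}, is bounded by $\norm f$ times
\[
D(t)=\int_1^\infty\big|t\,g(xt-L)-v'(x)\,g(v(x)-w(t))\big|\dd x ,
\]
and that $D(t)\to0$. Substitute $x=(u+L)/t$, so that $u$ ranges over $(t-L,\infty)$. The first kernel becomes exactly $g(u)$. The second becomes
\[
k_t(u)=\frac{v'(x)}{t}\,g\big(v(x)-w(t)\big),
\]
extended by $0$ for $u\le t-L$. Both kernels are nonnegative. Their total masses are
\[
\int_{t-L}^\infty g\to1\qquad\text{and}\qquad\int_0^\infty g(v-w(t))\dd v=1-\mathcal G(-w(t))\to1 .
\]
Pointwise convergence follows from $\log x=L+\log(L+u)$. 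First,
\[
\frac{v'(x)}{t}=\frac{L+\log(L+u)}{L+u}\to1 .
\]
Second, since $w(t)=\tfrac12(L+\log L)^2$,
\[
v(x)-w(t)=\tfrac12\log\Big(1+\frac uL\Big)\big(2L+\log(L+u)+\log L\big)=u\Big(1+O\Big(\frac{\log L}{L}\Big)\Big)\to u
\]
for each fixed $u$. Hence $k_t(u)\to g(u)$ as $t\to0^+$.

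Scheffé's lemma (Lemma~\ref{lem:scheffe}) is stated for sequences. It still gives $D(t)=\int_\R|k_t-g\mathbf 1_{(t-L,\infty)}|\dd u\to0$ as $t\to0^+$, because otherwise some sequence $t_j\to0$ would violate it.

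Finally set
\[
\eta(t)=8t+\frac{3t}{e}+D(t),
\]
which is independent of $f$ and tends to $0$. The only delicate point is the pointwise asymptotics of $v(x)-w(t)$ in the $u$-variable. The expansion above handles it for fixed $u$, and no uniformity in $u$ is needed, since Scheffé's lemma upgrades pointwise convergence plus convergence of masses to $L^1$ convergence.
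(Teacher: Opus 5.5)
Your proof is correct and follows the paper's argument step for step: the Gumbel approximation of Lemma~\ref{lem:gumbel}, the sum-to-integral comparison via $\int_\R|g'|=2/e$, discarding $(0,1)$, the substitution $x=(u+L_t)/t$, and Scheff\'e's lemma along sequences. The only differences are cosmetic: you compute $v(x)-w(t)$ directly as a difference of squares instead of integrating $\sigma_t'$, and you should apply Scheff\'e with the fixed limit $g$ rather than the $t$-dependent $g\mathbf 1_{(t-L,\infty)}$; your pointwise limit together with $\int k_t\to1$ gives $\int_\R|k_t-g|\to0$, and this integral dominates $D(t)=\int_{t-L}^\infty|k_t-g|$.
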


\begin{proof}
Write $k_t(x)=t\,g(tx-L_t)$. We compare $\cA_f(e^{-t})=\sum f(n)\omega_n$ with $J_f(w(t))$ in four steps.

\emph{Step 1.} By Lemma~\ref{lem:gumbel}, $|\cA_f(e^{-t})-\sum_{n\ge1}f(n)k_t(n)|\le8t\norm f$.

\emph{Step 2.} Since $\psi_f=f(n)$ on $(n-1,n]$,
\[
\Big|\sum_{n\ge1}f(n)k_t(n)-\int_0^\infty\psi_f(x)k_t(x)\dd x\Big|\le\norm f\sum_{n\ge1}\int_{n-1}^n|k_t(n)-k_t(x)|\dd x\le\norm f\int_0^\infty|k_t'(y)|\dd y,
\]
because $|k_t(n)-k_t(x)|\le\int_{n-1}^n|k_t'|$ for $x\in(n-1,n]$. By~\eqref{eq:gprime}, $\int_0^\infty|k_t'|\le t\int_\R|g'|=2t/e$.

\emph{Step 3.} $\int_0^1|\psi_f|k_t\le t\,g(0)\norm f=t\norm f/e$.

\emph{Step 4.} By~\eqref{eq:Jx},
\[
\Big|\int_1^\infty\psi_f(x)k_t(x)\dd x-J_f(w(t))\Big|\le\norm f\,D(t),
\]
where
\[
D(t):=\int_1^\infty\Big|t\,g(tx-L_t)-v'(x)\,g\big(v(x)-w(t)\big)\Big|\dd x .
\]
Substitute $x=z_u:=(u+L_t)/t$, so that $x\ge1$ corresponds to $u\ge t-L_t$, $z_0=x_t$ and $\log z_u=L_t+\log(u+L_t)$. Put
\[
\sigma_t(u):=v(z_u)-w(t),\qquad \sigma_t'(u)=\frac{v'(z_u)}{t}=\frac{L_t+\log(u+L_t)}{u+L_t}\qquad(u\ge t-L_t).
\]
Then $t\,g(tx-L_t)\dd x=g(u)\dd u$ and $v'(x)g(v(x)-w(t))\dd x=h_t(u)\dd u$ with $h_t(u):=g(\sigma_t(u))\sigma_t'(u)$ for $u\ge t-L_t$ and $h_t(u):=0$ otherwise. Thus
\[
D(t)\le\int_\R|h_t(u)-g(u)|\dd u .
\]
Here $h_t\ge0$ because $\log z_u\ge0$. As $\sigma_t$ increases from $\sigma_t(t-L_t)=v(1)-w(t)=-w(t)$ to $\infty$,
\[
\int_\R h_t=\int_{-w(t)}^\infty g=1-\mathcal G(-w(t))\longrightarrow1=\int_\R g\qquad(t\to0^+).
\]
For fixed $u\in\R$ and $L_t\to\infty$, $\sigma_t'(u')=(L_t+\log(u'+L_t))/(u'+L_t)\to1$ uniformly for $u'$ between $0$ and $u$. Since $\sigma_t(0)=0$, it follows that $\sigma_t(u)=\int_0^u\sigma_t'\to u$, and hence $h_t(u)\to g(u)$ by continuity of $g$. By Lemma~\ref{lem:scheffe}, applied along an arbitrary sequence $t_j\to0^+$, $D(t)\to0$.

Collecting the four steps, the lemma holds with $\eta(t)=8t+2t/e+t/e+\int_\R|h_t-g|$.
\end{proof}

\subsection{Windows in the new coordinate}

\begin{lemma}\label{lem:window}
Let $f$ be bounded and $c>0$ fixed. As $X\to\infty$,
\[
\frac{\log X}{X}\sum_{X<n\le X+cX/\log X}f(n)=\int_{v(X)}^{v(X)+c}\Phi_f(v)\dd v+o(1).
\]
\end{lemma}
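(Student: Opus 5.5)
The plan is to convert the sum into an integral of $\psi_f$ and then change variables to $v=v(x)$. Put $Y=X+cX/\log X$.

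\emph{Step 1 (sum to integral).} Since $\psi_f=f(n)$ on $(n-1,n]$, the sum $\sum_{X<n\le Y}f(n)$ differs from $\int_X^Y\psi_f(x)\dd x$ by $O(\norm f)$. The discrepancy comes only from the two boundary cells near $X$ and $Y$. After multiplication by $\log X/X$ this error is $o(1)$.

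\emph{Step 2 (change of variables).} Substitute $x=\chi(v)$, so that $\dd x=\chi'(v)\dd v=\chi(v)\dd v/\log\chi(v)$. This gives
\[
\int_X^Y\psi_f(x)\dd x=\int_{v(X)}^{v(Y)}\Phi_f(v)\,\frac{\chi(v)}{\log\chi(v)}\dd v .
\]
For $x\in[X,Y]$ we have $x/\log x=(X/\log X)(1+O(1/\log X))$. The ratio is $1+O(c/\log X)$ in $x$, and $\log x=\log X+O(c/\log X)$. Hence, multiplied by $\log X/X$, the integral equals
\[
\int_{v(X)}^{v(Y)}\Phi_f(v)\dd v\,\big(1+O_c(1/\log X)\big),
\]
or, equivalently, up to an additive error $\norm f\,(v(Y)-v(X))\,O_c(1/\log X)$.

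\emph{Step 3 (endpoint).} It remains to check that $v(Y)-v(X)=c+o(1)$. Write $\log Y=\log X+\log(1+c/\log X)=\log X+c/\log X-O(c^2/(\log X)^2)$. Then
\[
v(Y)-v(X)=\tfrac12\big((\log Y)^2-(\log X)^2\big)=\log X\cdot\Big(\frac{c}{\log X}+O\big((\log X)^{-2}\big)\Big)+O\big((\log X)^{-2}\big)=c+O_c\big(1/\log X\big).
\]
Since $\Phi_f$ is bounded, replacing $v(Y)$ by $v(X)+c$ costs $O(\norm f/\log X)$.

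Collecting the three errors, each $o(1)$ with $f$ and $c$ fixed, gives the lemma. The only delicate point is Step 3, where the endpoint shift in the $v$-coordinate must be controlled. The key is the expansion of $(\log Y)^2$, which shows the window has length exactly $c$ asymptotically. Everything else uses only the boundedness of $f$.
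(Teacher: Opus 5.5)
Your proof is correct and follows the paper's argument step for step. You pass from the sum to $\int_X^Y\psi_f$ at the cost of two boundary cells, substitute $x=\chi(v)$ with Jacobian $(X/\log X)(1+O(1/\log X))$ on the window, and show $v(Y)-v(X)=c+o(1)$; the paper obtains the last two bounds from the monotonicity of $x/\log x$ and $\log y/y$ rather than by Taylor expansion, which is only cosmetic.

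One small point: since $\Phi_f$ may be complex or change sign, the multiplicative form $\int\Phi_f\,(1+O(1/\log X))$ in your Step 2 is not valid in general. Only the additive error $\norm f\,(v(Y)-v(X))\,O(1/\log X)$ is legitimate, but that is the form you actually use.
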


\begin{proof}
Let $H=cX/\log X$ and $X\ge e$. First, $\sum_{X<n\le X+H}f(n)$ and $\int_X^{X+H}\psi_f$ differ by at most $2\norm f$, since only the two unit cells $(n-1,n]$ containing $X$ or $X+H$ are cut. Second, with $x=\chi(v)$,
\[
\int_X^{X+H}\psi_f(x)\dd x=\int_{v(X)}^{v(X+H)}\Phi_f(v)\chi'(v)\dd v .
\]
On this range, $\chi'(v)=x/\log x$ lies between $X/\log X$ and $(X+H)/\log(X+H)\le(X/\log X)(1+c/\log X)$, because $x/\log x$ is increasing for $x\ge e$. Third, since $\log y/y$ is decreasing for $y\ge e$,
\[
c\,\frac{X\log(X+H)}{(X+H)\log X}\le v(X+H)-v(X)=\int_X^{X+H}\frac{\log y}{y}\dd y\le c ,
\]
so $v(X+H)-v(X)=c+o(1)$. Combining the three observations,
\[
\frac{\log X}{X}\sum_{X<n\le X+H}f(n)=\int_{v(X)}^{v(X+H)}\Phi_f+O\Big(\frac{c^2\norm f}{\log X}+\frac{\norm f\log X}{X}\Big)=\int_{v(X)}^{v(X)+c}\Phi_f+o(1),
\]
where the last step uses $|\int_{v(X)+c}^{v(X+H)}\Phi_f|\le\norm f\,|v(X+H)-v(X)-c|$.
\end{proof}

Since $X\mapsto v(X)$ is an increasing bijection of $[1,\infty)$ onto $[0,\infty)$, Lemma~\ref{lem:window} shows that $f$ satisfies $(\mathrm{SI}_L)$ if and only if
\begin{equation}\label{eq:SIv}
\lim_{w\to\infty}\int_w^{w+c}\Phi_f(v)\dd v=cL\qquad\text{for every } c>0 .
\end{equation}

\subsection{Proof of Theorem~\ref{thm:main}}
By Lemma~\ref{lem:identities} the three limits in~(i) are limits of the same function $\cA_f$. By Lemma~\ref{lem:reduction}, and since $t\mapsto w(t)$ is a decreasing homeomorphism of $(0,1/e)$ onto $(\frac12,\infty)$, condition~(i) is equivalent to
\begin{equation}\label{eq:Jlimit}
\lim_{w\to\infty}J_f(w)=L .
\end{equation}
It therefore suffices to prove that~\eqref{eq:Jlimit} is equivalent to~\eqref{eq:SIv}.

Assume~\eqref{eq:SIv}. Lemma~\ref{lem:abelstep}, applied with $\Theta=\Phi_f-L$ and $k=g$, gives $J_f(w)-L\int_0^\infty g(v-w)\dd v\to0$. Since $\int_0^\infty g(v-w)\dd v=1-\mathcal G(-w)\to1$, this proves~\eqref{eq:Jlimit}.

Conversely, assume~\eqref{eq:Jlimit}. Define $\widetilde\Phi\in L^\infty(\R)$ by $\widetilde\Phi=\Phi_f$ on $[0,\infty)$ and $\widetilde\Phi=L$ on $(-\infty,0)$, and let $K(z)=g(-z)$. By~\eqref{eq:fourier}, $\widehat K(\xi)=\Gamma(1-i\xi)\neq0$ for all real $\xi$, and $\widehat K(0)=1$. Moreover
\[
(K*\widetilde\Phi)(w)=\int_\R g(v-w)\widetilde\Phi(v)\dd v=J_f(w)+L\,\mathcal G(-w).
\]
As $w\to+\infty$ this tends to $L$ by~\eqref{eq:Jlimit}. As $w\to-\infty$ we have $|J_f(w)|\le\norm f(1-\mathcal G(-w))\to0$ and $L\mathcal G(-w)\to L$. Hence $(K*\widetilde\Phi)(w)\to L\widehat K(0)$ as $|w|\to\infty$. Theorem~\ref{thm:wiener} with $k=\mathbf 1_{[-c,0]}$ yields
\[
(k*\widetilde\Phi)(w)=\int_w^{w+c}\widetilde\Phi(v)\dd v\longrightarrow cL\qquad(w\to+\infty),
\]
which is~\eqref{eq:SIv}. The statement for sets is the case $f=\mathbf 1_S$. \qed

\subsection{Proof of Theorem~\ref{thm:abelian}}
Let $\eps'>\eps$ and choose $N_0\ge3$ with $|E(n)|\le\eps'n/\log n$ for $n\ge N_0$. Let $t\in(0,1/e)$, $L=L_t$ and $k_t(x)=t\,g(tx-L)$. By Lemma~\ref{lem:gumbel} and~\eqref{eq:mass},
\[
\cA_f(e^{-t})-d=\sum_{n\ge1}(f(n)-d)k_t(n)+O\big((\norm f+|d|)t\big)-d\,(e^{-t};e^{-t})_\infty ,
\]
and $(e^{-t};e^{-t})_\infty\to0$. Put $E(0)=0$, so that $f(n)-d=E(n)-E(n-1)$. Since $E(n)=O(n)$ and $k_t(n)$ decays exponentially in $n$, partial summation gives
\[
\sum_{n\ge1}(f(n)-d)k_t(n)=\sum_{n\ge1}E(n)\big(k_t(n)-k_t(n+1)\big),\qquad |k_t(n)-k_t(n+1)|\le\int_n^{n+1}t^2|g'(ty-L)|\dd y .
\]
The terms with $n<N_0$ contribute at most $N_0^2(\norm f+|d|)t^2\max|g'|\to0$. For $n\ge N_0$ and $y\in[n,n+1]$ we have $|E(n)|\le\eps'n/\log n\le\eps'y/\log y$, so these terms contribute at most $\eps'T(t)$, where
\[
T(t)=\int_{N_0}^\infty\frac{y}{\log y}\,t^2|g'(ty-L)|\dd y=\int_{tN_0-L}^\infty r_t(u)\,|g'(u)|\dd u,\qquad r_t(u)=\frac{u+L}{L+\log(u+L)} ,
\]
after the substitution $u=ty-L$. For fixed $u$, $r_t(u)\to1$ as $t\to0^+$. If $L\ge2$ and $u\ge-L/2$, then $\log(u+L)\ge0$ and $r_t(u)\le1+|u|$, and $(1+|u|)|g'(u)|$ is integrable. If $tN_0-L\le u<-L/2$, then $y\ge N_0\ge3$ gives $r_t(u)=ty/\log y\le u+L\le L$. In addition, $|g'(u)|=g(u)|e^{-u}-1|\le e^{2|u|}\exp(-e^{|u|})$ for $u<0$. Hence this range contributes at most
\[
L\int_{L/2}^\infty e^{2s}\exp(-e^s)\dd s=L\int_{e^{L/2}}^\infty r e^{-r}\dd r\to0 .
\]
By dominated convergence and~\eqref{eq:gprime}, $T(t)\to\int_\R|g'|=2/e$. Therefore $\limsup_{t\to0^+}|\cA_f(e^{-t})-d|\le2\eps'/e$ for every $\eps'>\eps$. \qed

\subsection{Proof of Corollary~\ref{cor:consequences}}
Part~(a) is the case $\eps=0$ of Theorem~\ref{thm:abelian}. For~(b), Theorem~\ref{thm:main} gives $(\mathrm{SI}_L)$, and we use only $c=1$. Let $\eps>0$ and choose $Y\ge e$ such that for $X\ge Y$,
\[
\Big|\sum_{X<n\le X+X/\log X}f(n)-\frac{LX}{\log X}\Big|\le\frac{\eps X}{\log X}.
\]
Define $X_0=Y$ and $X_{j+1}=X_j+X_j/\log X_j$. Given $x>Y$, let $J$ be maximal with $X_J\le x$. The intervals $(X_j,X_{j+1}]$, $0\le j<J$, partition $(Y,X_J]$. Summing over them gives $|\sum_{Y<n\le X_J}f(n)-L(X_J-Y)|\le\eps(X_J-Y)$. Since $x-X_J<X_J/\log X_J=o(x)$,
\[
\Big|\sum_{n\le x}f(n)-Lx\Big|\le\eps x+o(x),
\]
and letting $\eps\to0$ gives $M(f)=L$. \qed

\subsection{Proof of Theorem~\ref{thm:counter}(a)}
Let $\rho(n)=\frac12+\delta\cos(\theta v(n))\in[0,1]$, $R(0)=0$, $R(n)=\sum_{m\le n}\rho(m)$, and
\[
S=\{n\ge1:\ \lfloor R(n)\rfloor>\lfloor R(n-1)\rfloor\}.
\]
Since $0\le\rho\le1$, the difference $\lfloor R(n)\rfloor-\lfloor R(n-1)\rfloor$ is $0$ or $1$, so $A_S(N)=\lfloor R(N)\rfloor$ for every integer $N\ge0$. Hence $A_S(x)=\lfloor x\rfloor/2+\delta\sum_{n\le x}\cos(\theta v(n))+O(1)$.

The function $\phi(y)=\cos(\theta v(y))$ has total variation at most $\theta v(x)=\theta(\log x)^2/2$ on $[1,x]$, so $\sum_{n\le x}\phi(n)=\int_e^x\phi(y)\dd y+O_\theta((\log x)^2)$. Write $\ell=\log y$. Using $\phi(y)=\frac{y}{\theta\ell}\,\frac{\dd}{\dd y}\sin(\theta v(y))$ and integrating by parts,
\[
\int_e^x\phi=\Big[\frac{y\sin(\theta v(y))}{\theta\ell}\Big]_e^x-\frac1\theta\int_e^x\sin(\theta v(y))\,\frac{\ell-1}{\ell^2}\dd y .
\]
In the last integral write $\sin(\theta v(y))=-\frac{y}{\theta\ell}\frac{\dd}{\dd y}\cos(\theta v(y))$ and integrate by parts again. The boundary term is $O_\theta(x/(\log x)^2)$. The remaining integrand is $\cos(\theta v(y))$ times
\[
\frac{\dd}{\dd y}\,\frac{y(\ell-1)}{\ell^3}=\frac1{\ell^2}-\frac3{\ell^3}+\frac3{\ell^4}=O\Big(\frac1{\ell^2}\Big),
\]
and $\int_e^x\ell^{-2}\dd y=O(x/(\log x)^2)$. Hence $\int_e^x\phi=x\sin(\theta v(x))/(\theta\log x)+O_\theta(x/(\log x)^2)$, which proves~\eqref{eq:AS-osc}.

Now fix $c>0$, let $X\to\infty$, and put $H=cX/\log X$ and $w=v(X)$. Since $(X+H)/\log(X+H)=(X/\log X)(1+O(1/\log X))$ and, by the proof of Lemma~\ref{lem:window}, $v(X+H)=w+c+o(1)$,~\eqref{eq:AS-osc} gives
\begin{equation}\label{eq:window-osc}
\begin{aligned}
\frac{\log X}{X}\big(A_S(X+H)-A_S(X)\big)&=\frac c2+\frac\delta\theta\big(\sin(\theta(w+c))-\sin(\theta w)\big)+o(1)\\
&=\int_w^{w+c}r(v)\dd v+o(1),
\end{aligned}
\end{equation}
where $r(v)=\frac12+\delta\cos(\theta v)$. By Lemma~\ref{lem:window}, $\int_w^{w+c}(\Phi_{\mathbf 1_S}-r)\to0$ as $w\to\infty$ for every $c>0$. Lemma~\ref{lem:abelstep} with $\Theta=\Phi_{\mathbf 1_S}-r$ and $k=g$ gives $J_{\mathbf 1_S}(w)-\int_0^\infty r(v)g(v-w)\dd v\to0$. By~\eqref{eq:fourier},
\[
\int_0^\infty r(v)g(v-w)\dd v=\int_{-w}^\infty r(w+u)g(u)\dd u=\frac12+\delta\,\mathrm{Re}\Big(e^{i\theta w}\int_\R e^{i\theta u}g(u)\dd u\Big)+O(\mathcal G(-w)),
\]
and $\int_\R e^{i\theta u}g(u)\dd u=\Gamma(1-i\theta)$. Lemma~\ref{lem:reduction} now gives~\eqref{eq:FS-osc}. Since $w(t)$ runs continuously through all sufficiently large reals as $t\to0^+$, the upper and lower limits of $F_S$ are $\frac12\pm\delta|\Gamma(1-i\theta)|$, and $|\Gamma(1-i\theta)|^2=\pi\theta/\sinh(\pi\theta)$. \qed

\subsection{Proof of Theorem~\ref{thm:counter}(b)}
\emph{Normalization of $h$.} Put $h_1(x)=\inf_{y\ge x}\min(h(y),y)$ for $x\ge3$. Then $h_1\le h$, and $h_1$ is nondecreasing. Write $\nu(y)=h(y)\log y/y$. Since $y/\log y$ is increasing for $y\ge e$, we have $h_1(x)\ge\inf_{y\ge x}\min(\nu(y),\log y)\cdot x/\log x$, so $h_1(x)\log x/x\to\infty$; in particular $h_1(x)>0$ for all large $x$, which is all that is used below. We may therefore assume that $h$ is nondecreasing and $h(x)\le x$, and we write $\nu(x)=h(x)\log x/x\to\infty$.

\emph{Construction.} Choose even integers $X_1<X_2<\cdots$ with $h(X_1)\ge8$ and $X_{k+1}\ge4X_k$, and put $B_k=\lfloor h(X_k)\rfloor$, so that $8\le B_k\le X_k$. Let
\[
I_k^-=(X_k,X_k+B_k]\cap\N,\qquad I_k^+=(X_k+B_k,X_k+2B_k]\cap\N .
\]
These blocks lie in $(X_k,3X_k]$ and are pairwise disjoint. Define
\[
S=\Big(2\N\setminus\bigcup_k(I_k^-\cup I_k^+)\Big)\cup\bigcup_kI_k^+ .
\]

\emph{Counting function.} Any $2B_k$ consecutive integers contain exactly $B_k$ even integers, and $\#I_k^+=B_k$. Hence $A_S(x)=\lfloor x/2\rfloor$ for every $x\ge 1$ outside $\bigcup_k(X_k,X_k+2B_k)$. For $x=X_k+s$ with $0<s\le2B_k$, since $X_k$ is even,
\[
A_S(x)-\Big\lfloor\frac x2\Big\rfloor=\max(0,\lfloor s\rfloor-B_k)-\Big\lfloor\frac s2\Big\rfloor\in\Big[-\frac{B_k}2-1,\ 1\Big].
\]
Thus $|A_S(x)-x/2|\le B_k/2+2\le h(X_k)\le h(x)$ for $x\in(X_k,X_k+2B_k]$, and $|A_S(x)-x/2|\le\frac12\le h(x)$ for all other $x\ge X_1$.

\emph{Oscillation.} Let $a_k=X_k+B_k/2$ and choose $t_k\in(0,1/e)$ with $x_{t_k}=a_k$. Write $L=L_{t_k}$, so $t_k=L/a_k$. Every integer $n$ with $|n-a_k|<B_k/2$ lies in $I_k^-$. Since $nt_k-L=t_k(n-a_k)$, every $n\in S$ therefore satisfies $|nt_k-L|\ge M_k:=t_kB_k/2$. Since $L\le e^L$, we have $a_k=Le^L\le e^{2L}$, so $L\ge\frac12\log a_k$; the same holds with $b_k$ below in place of $a_k$. Since $X_k\le a_k\le2X_k$ and $B_k\ge h(X_k)-1$,
\[
M_k=\frac{LB_k}{2a_k}\ge\frac{B_k\log X_k}{8X_k}\ge\frac{\nu(X_k)}8-\frac{\log X_k}{8X_k}\longrightarrow\infty .
\]
Because $g$ is decreasing on $[0,\infty)$ and increasing on $(-\infty,0]$, comparison with integrals gives
\[
\sum_{n:\ |nt-L|\ge M}t\,g(nt-L)\le\int_{|u|\ge M-t}g(u)\dd u \qquad (M\ge t).
\]
With Lemma~\ref{lem:gumbel},
\[
F_S(e^{-t_k})\le\sum_{n:\ |nt_k-L|\ge M_k}\omega_n(e^{-t_k})\le\int_{|u|\ge M_k-t_k}g+8t_k\longrightarrow0 .
\]
Similarly, let $b_k=X_k+3B_k/2$ and $x_{t'_k}=b_k$. Every integer $n$ with $|n-b_k|<B_k/2$ lies in $I_k^+\subseteq S$, and $b_k\le3X_k$ gives $M'_k:=t'_kB_k/2\ge B_k\log X_k/(12X_k)\to\infty$. The same estimate gives $F_{\N\setminus S}(e^{-t'_k})\to0$. By~\eqref{eq:mass}, $F_S=1-(q;q)_\infty-F_{\N\setminus S}$, so $F_S(e^{-t'_k})\to1$. Since $0\le F_S\le1$, this proves~(b). \qed

\subsection{Proof of Corollary~\ref{cor:ND}}
Apply Theorem~\ref{thm:counter}(b) with $h(x)=x/\sqrt{\log x}$. Then $h(x)\log x/x=\sqrt{\log x}\to\infty$ and $h(x)=o(x)$, so $S$ has natural density $\frac12$ while $F_S(q)$ oscillates between $0$ and $1$. The function $f=\mathbf 1_S$ is bounded with $M(f)=\frac12$, and the radial path lies in every Stolz sector at $1$. \qed

\begin{remark}
A concrete set with the properties of Corollary~\ref{cor:ND} is $S=\{n\ge2:\ \lfloor(\log n)^{3/2}\rfloor\ \text{even}\}$. Its $j$-th block $[e^{j^{2/3}},e^{(j+1)^{2/3}})$ has length $\asymp x(\log x)^{-1/2}$ at height $x$. The block lengths increase with $j$, and the numbers of integers in consecutive blocks differ from these lengths by at most $1$. The alternating sum of these counts therefore gives $A_S(x)=x/2+O(x(\log x)^{-1/2})$, the rounding contributing only $O((\log x)^{3/2})$. On the other hand, taking $x_t$ at the midpoint of a block of length $B$ at height $x$, the block contains all $n$ with $|nt-L_t|<M$, where $M=tB/2\asymp(\log x)^{1/2}\to\infty$. The argument above then applies verbatim.
\end{remark}

\subsection{Proof of Theorem~\ref{thm:primes}}\label{sec:primesproof}
Fix $\alpha\ge0$. For $S\subseteq\N$ let $\mu_S$ be the measure on $[0,\infty)$ given by
\[
\mu_S(\mathcal B)=\sum_{n\in S,\ v(n)\in\mathcal B}\frac{(\log n)^{1+\alpha}}{n},\qquad I_S(w)=\int g(v-w)\dd\mu_S(v)=\sum_{n\in S}\frac{(\log n)^{1+\alpha}}{n}\,g\big(v(n)-w\big);
\]
for Borel sets $\mathcal B\subseteq[0,\infty)$; note that $\mu_S(\{0\})=0$. Consider the condition
\begin{equation*}\tag{B}\label{eq:B}
A_S\Big(x+\frac{x}{\log x}\Big)-A_S(x)\le\frac{Cx}{(\log x)^{1+\alpha}}\qquad(x\ge3)
\end{equation*}
for some constant $C$. Write $k_t(x)=t\,g(tx-L_t)$ as before, and $\varrho(y)=e^y\exp(-e^y)$, so that $g(u)=\varrho(-u)$ for $u\le0$ and $\varrho$ is decreasing on $[0,\infty)$. Put $\bar g_j=\max_{[j,j+1]}g$; then $\sum_{j\in\Z}(2+|j|)\bar g_j<\infty$. The function $(\log y)^{1+\alpha}/y$ is decreasing for $y\ge e^{1+\alpha}$.

\begin{lemma}\label{lem:muwindows}
Let $S$ satisfy~\eqref{eq:B}.
\begin{enumerate}
\item[(a)] $M_S:=\sup_{w\ge0}\mu_S((w,w+1])<\infty$.
\item[(b)] For $\delta\ge0$, condition~\textup{(ii)} of Theorem~\ref{thm:primes} holds if and only if $\mu_S((w,w+c])\to c\delta$ as $w\to\infty$ for every $c>0$.
\end{enumerate}
\end{lemma}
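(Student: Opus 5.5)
The plan is to translate windows in the $v$-coordinate back into windows in $x$, exactly as in Lemma~\ref{lem:window}, but now with the weight $(\log n)^{1+\alpha}/n$ in place of the factor $\log X/X$. The key observation is that for $v(n)\in(w,w+c]$ we have $n\in(X,X']$ with $X=\chi(w)$ and $X'=\chi(w+c)$. Since $v'(x)=\log x/x$, the proof of Lemma~\ref{lem:window} gives $X'=X+cX/\log X\,(1+O_c(1/\log X))$. On this range $(\log n)^{1+\alpha}/n=(\log X)^{1+\alpha}/X\cdot(1+O_c(1/\log X))$, because $\log n=\log X+O(1/\log X)$ and $n/X=1+O(1/\log X)$. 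Hence
\[
\mu_S((w,w+c])=\frac{(\log X)^{1+\alpha}}{X}\big(A_S(X')-A_S(X)\big)\big(1+O_c(1/\log X)\big).
\]

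For (a), take $c=1$. The interval $(X,X']$ has length $(1+o(1))X/\log X$, so it is covered by at most two intervals of the form $(y,y+y/\log y]$ with $y\in[X,X']$: take $y_0=X$ and $y_1=y_0+y_0/\log y_0$; since $y_1+y_1/\log y_1\ge X+2X/\log X\cdot(1-o(1))$, two steps suffice for large $X$. Condition~\eqref{eq:B} then bounds $A_S(X')-A_S(X)\le 2C'X/(\log X)^{1+\alpha}$ for large $X$. With the displayed relation this gives a uniform bound on $\mu_S((w,w+1])$ for $w\ge w_0$. For $w<w_0$ the bound is trivial, because $\mu_S((0,w_0+1])$ is a finite sum.

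For (b), I use the same relation. Suppose (ii) holds. Given $w$, set $X=\chi(w)$. The endpoint $X'$ differs from $X+cX/\log X$ by $O_c(X/(\log X)^2)$. The count over that discrepancy is at most $O_c(1/\log X)$ times a bounded number of windows $(y,y+y/\log y]$, so by~\eqref{eq:B} it is $o(X/(\log X)^{1+\alpha})$. Hence $A_S(X')-A_S(X)=c\delta X/(\log X)^{1+\alpha}+o(X/(\log X)^{1+\alpha})$, and multiplying by the weight gives $\mu_S((w,w+c])\to c\delta$. Conversely, given $x$, set $w=v(x)$ and define $c'$ by $\chi(w+c')=x+cx/\log x$. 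Then $c'=c+o(1)$. Since $\mu_S((w+c,w+c'])$ is bounded by $M_S\lceil|c'-c|\rceil$-type estimates, I will refine this using (a) applied to windows of length $|c'-c|$. The cleanest way is to sandwich between $c\pm\eta$: monotonicity of $\mu_S$ gives $\mu_S((w,w+c-\eta])\le\mu_S((w,w+c'])\le\mu_S((w,w+c+\eta])$ for large $w$. The limits are then $(c\pm\eta)\delta$, and letting $\eta\to0$ finishes this direction. The same sandwich argument, using monotonicity of $A_S$, also handles the forward direction without needing (a).

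I expect the main obstacle to be the bookkeeping of the endpoint errors $X'$ versus $X+cX/\log X$, which are of relative order $1/\log X$. Positivity, that is, monotonicity in $c$ of both the counts and $\mu_S$, together with the sandwich over $c\pm\eta$ should absorb these errors, so (a) is needed only as a uniform tool later on.
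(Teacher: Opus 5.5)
Your proposal is correct and follows the paper's argument. On $(X,\chi(w+c)]$ the weight $(\log n)^{1+\alpha}/n$ equals $(1+o(1))(\log X)^{1+\alpha}/X$. Part (a) follows by covering $(X,\chi(w+1)]$ with boundedly many windows from~\eqref{eq:B}. Part (b) follows from the monotonicity of $A_S$ and $\mu_S$ in the window length, together with a sandwich between $c\pm\eta$.

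Discard your first justification of the forward direction. Condition~\eqref{eq:B} bounds only full windows $(y,y+y/\log y]$. It gives no bound proportional to length on an interval of length $O(X/(\log X)^2)$. Similarly, (a) gives nothing for $\mu_S$ on windows of length $|c'-c|\to0$.

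The sandwich argument you describe at the end proves both directions. In that form the multiplicative error $1+O(1/\log X)$ is applied to a quantity already known to converge. So part (b) does not actually need~\eqref{eq:B}, whereas the paper uses~\eqref{eq:B} to make that error additive.
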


\begin{proof}
Let $w$ be large and $X=\chi(w)$. Then $\chi(w+c)=X\exp(\sqrt{2w+2c}-\sqrt{2w})\le X\exp(c/\log X)$. For $n\in(X,\chi(w+c+1)]$, uniformly, $(\log n)^{1+\alpha}/n=((\log X)^{1+\alpha}/X)(1+o(1))$.

(a) The interval $(X,\chi(w+1)]$ is covered by at most three intervals $(z,z+z/\log z]$ with $X\le z\le2X$. By~\eqref{eq:B} it contains $O(X/(\log X)^{1+\alpha})$ elements of $S$, each of weight at most $(\log X)^{1+\alpha}/X$. Bounded $w$ contribute a finite amount.

(b) Put $Y_c=X+cX/\log X$. By the proof of Lemma~\ref{lem:window}, $v(Y_c)=w+c+o(1)$, hence
\[
\mu_S\big((w,v(Y_c)]\big)=\frac{(\log X)^{1+\alpha}}{X}\big(A_S(Y_c)-A_S(X)\big)(1+o(1)),
\]
and by~\eqref{eq:B} the factor $1+o(1)$ may be replaced by an additive $o(1)$. Let $0<\eta<c$. For large $w$ we have $w+c-\eta\le v(Y_c)\le w+c+\eta$ and $v(Y_{c-\eta})\le w+c\le v(Y_{c+\eta})$. Both the $\mu_S$-mass and the count increase with $c$, so each of the two window conditions places the other quantity between its values at $c-\eta$ and $c+\eta$, up to $o(1)$. Letting $\eta\to0$ proves the equivalence.
\end{proof}

\begin{lemma}\label{lem:primesreduction}
If $S$ satisfies~\eqref{eq:B}, then $L_t^{\alpha}F_S(e^{-t})-I_S(w(t))\to0$ as $t\to0^+$.
\end{lemma}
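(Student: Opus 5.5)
Throughout, $F_S(e^{-t})=\sum_{n\in S}\omega_n(e^{-t})$, and we compare it term by term with $I_S(w(t))$. The comparison runs in three stages, each contributing an error that must vanish after multiplication by $L_t^\alpha$. Condition~\eqref{eq:B} controls every error. Through Lemma~\ref{lem:muwindows}(a), it makes all sums of the form $\sum_{n\in S}\frac{(\log n)^{1+\alpha}}{n}G(v(n)-w)$ of size $\lesssim M_S\sum_j \bar G_j$ for kernels $G$ with summable window maxima.

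\emph{Step 1 (Gumbel).} Replace $\omega_n$ by $k_t(n)=t\,g(nt-L_t)$. The total error in Lemma~\ref{lem:gumbel} is $8t$, and $L_t^\alpha\cdot 8t\to0$, so this step is harmless without further work.

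\emph{Step 2 (change of variables).} Compare $L_t^\alpha k_t(n)$ with $\frac{(\log n)^{1+\alpha}}{n}g(v(n)-w(t))$ for each $n$. Write $n=z_u=(u+L_t)/t$. From the proof of Lemma~\ref{lem:reduction}, $v'(n)=t\sigma_t'(u)$ and $v(n)-w(t)=\sigma_t(u)$. This gives
\[
\frac{(\log n)^{1+\alpha}}{n}g(v(n)-w)=t\,(\log n)^\alpha\,\sigma_t'(u)\,g(\sigma_t(u)),
\]
while $L_t^\alpha k_t(n)=tL_t^\alpha g(u)$. The ratio $(\log n/L_t)^\alpha=(1+\log(u+L_t)/L_t)^\alpha$ tends to $1$ for bounded $u$. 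Likewise $\sigma_t'(u)\to1$ and $\sigma_t(u)\to u$ locally uniformly.

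For fixed $U$, the central range $|u|\le U$ is then handled as follows. Uniform convergence makes the pointwise difference $o(1)$ times $\frac{(\log n)^{1+\alpha}}{n}$. The $\mu_S$-mass of the range is $O(M_S(2U+1))$, since $u\in[-U,U]$ maps to a $v$-interval of length about $2U$. So the contribution of $|u|\le U$ is $o(1)$.

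\emph{Step 3 (tails).} Both tail sums over $|u|>U$ must be bounded by some $\epsilon(U)\to0$, uniformly in small $t$.

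For the $I_S$ side, Lemma~\ref{lem:muwindows}(a) and $\sigma_t(u)\approx u$ give the tail bound $M_S\sum_{|j|\ge U/2}\bar g_j$. There is one caveat: for very small $n$, with $v(n)\le1$, the decreasing-weight region fails. That range is a finite sum times $g(-w+O(1))$, and it vanishes.

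For the partition side, fix the range in $n$ corresponding to $u\in[j,j+1]$, i.e.\ $n\in[(j+L_t)/t,(j+1+L_t)/t]$. This range has length $1/t$, which is about $x_t/L_t$. For moderate $j$, this interval is covered by $O(1)$ intervals $(z,z+z/\log z]$. By~\eqref{eq:B} it then contains $O(x_t L_t^{-1-\alpha})$ elements of $S$, each weighted by at most $tL_t^\alpha\bar g_j$. The block contribution is therefore $O(\bar g_j)$, and the sum over $|j|\ge U$ is small.

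The difficulty is at extreme $u$. For $u\gg L_t$, that is $n\ge x_t^{1+\epsilon}$, the number of covering intervals grows. However, $g(u)\le e^{-u}$ beats the count, which is at most $n$, so this range is summable. For $u$ near $-L_t$, i.e.\ small $n$, we use the trivial bound $\#S\cap[1,N]\le N$. There $g(u)\le\exp(-e^{|u|}/2)$ is doubly exponentially small, and it absorbs $L_t^\alpha$ and all polynomial factors.

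I expect Step 3, on the partition side, to be the main obstacle. The bookkeeping of how many \eqref{eq:B}-windows cover a $u$-block, as a function of $j$ and $L_t$, is where care is needed. I would split into $|j|\le L_t/2$, where $\log n\asymp L_t$ and $O(1+|j|)$ windows suffice (and $\sum(2+|j|)\bar g_j<\infty$ handles this range), and the two extreme ranges just described. Letting $t\to0$ and then $U\to\infty$ finishes the proof.
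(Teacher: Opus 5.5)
Your proposal is correct and follows essentially the paper's route: the Gumbel replacement at cost $8tL_t^{\alpha}$, the substitution $u_n=tn-L_t$ with $v(n)-w(t)=\sigma_t(u_n)$, a locally uniform bulk comparison on $|u_n|\le U$, and tails controlled by counting $S$ in unit $u$-blocks via~\eqref{eq:B}, with trivial counts and the doubly exponential decay of $g$ for $u_n<-L_t/2$. The differences are cosmetic. You bound the $I_S$-side tail and the bulk mass directly by $M_S$ from Lemma~\ref{lem:muwindows}(a), which also needs monotonicity of $\sigma_t$ because $\sigma_t(u)\approx u$ holds only locally; the paper instead redoes the count with explicit lower bounds on $\sigma_t$. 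For $u_n>L_t/2$ your trivial bound works, but your remark that the number of covering windows grows there is backwards: one window suffices, and it is the count per window that grows, which the paper's uniform bound~\eqref{eq:Nj} absorbs.
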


\begin{proof}
Let $L=L_t\ge8$ and, for $n\in S$, put $u_n=tn-L$. As in the proof of Lemma~\ref{lem:reduction} we have $\log n=L+\log(u_n+L)$, $v(n)-w(t)=\sigma_t(u_n)$ and $\log n/n=t\,\sigma_t'(u_n)$. Put
\[
a_n=L^{\alpha}t\,g(u_n),\qquad b_n=\frac{(\log n)^{1+\alpha}}{n}\,g\big(\sigma_t(u_n)\big)=t(\log n)^{\alpha}\sigma_t'(u_n)\,g\big(\sigma_t(u_n)\big).
\]
By Lemma~\ref{lem:gumbel}, $|L^{\alpha}F_S(e^{-t})-\sum_{n\in S}a_n|\le8tL^{\alpha}\to0$, and $I_S(w(t))=\sum_{n\in S}b_n$.

\emph{Counting.} Let $N_j=\#\{n\in S:\ j\le u_n<j+1\}$ for integers $j\ge j_0:=\lfloor-L/2\rfloor$. These $n$ lie in $[y,y+1/t)$ with $y=(L+j)/t$, $L+j\ge L/2-1\ge3$, and $\log y=L+\log(L+j)$. Since $(L+\log(L+j))/(L+j)\le4$, this interval is covered by at most five intervals $(z,z+z/\log z]$ with $y-1\le z\le2(y+1/t)$. By~\eqref{eq:B}, and since $\log(L+j)\ge0$,
\begin{equation}\label{eq:Nj}
N_j\le\frac{10C(L+j+1)}{t\,(L+\log(L+j))^{1+\alpha}}\le\frac{10C(L+|j|+1)}{tL^{1+\alpha}}\qquad(j\ge j_0).
\end{equation}

\emph{Tails of $a$.} By~\eqref{eq:Nj}, the $n$ with $u_n\in[j,j+1)$, $j\ge j_0$, contribute at most $10C(2+|j|)\bar g_j$ to $\sum a_n$. The remaining $n\in S$ satisfy $u_n<j_0\le-L/2$, so $n<L/(2t)$ and $a_n\le L^{\alpha}t\,\varrho(L/2)$. They contribute at most $\frac12L^{1+\alpha}\varrho(L/2)\to0$.

\emph{Tails of $b$.} For $j\ge j_0$, since $(\log n)^{1+\alpha}/n\le(\log y)^{1+\alpha}/y$ for $n\ge y$, the $n$ with $u_n\in[j,j+1)$ contribute at most
\[
N_j\,\frac{t(L+\log(L+j))^{1+\alpha}}{L+j}\,\bar g'_j\le\frac{10C(L+j+1)}{L+j}\,\bar g'_j\le20C\,\bar g'_j,\qquad \bar g'_j=\sup_{u\in[j,j+1)}g(\sigma_t(u)).
\]
Since $\log(s+L)\ge0$ for $s\ge j_0$, we have $\sigma_t'(s)\ge L/(s+L)$. For $u\ge0$ this gives $\sigma_t(u)\ge L\log(1+u/L)$, so $\bar g'_j\le2^{-j}$ for $0\le j\le L$ and $\bar g'_j\le2^{-L/2}(1+j/L)^{-L/2}$ for $j>L$. The sum of $20C\,\bar g'_j$ over $j>L$ is therefore $O(2^{-L/2}L)\to0$. For $j_0\le u\le0$ we have $\sigma_t'\ge1$ on $[u,0]$, hence $\sigma_t(u)\le u$ and $\bar g'_j\le\varrho(|j+1|)$ for $j\le-1$. For the $n$ with $u_n<j_0$, which satisfy $n<y_1=L/(2t)$, we have
\[
w(t)-v(n)\ge\tfrac12(\log x_t-\log y_1)\log x_t\ge L/3 .
\]
Their contribution is therefore at most $\varrho(L/3)\sum_{n<y_1}(\log n)^{1+\alpha}/n\le\varrho(L/3)(2L)^{2+\alpha}\to0$.

Consequently there is $\eps(U)\to0$ as $U\to\infty$ such that, for all sufficiently small $t$, the $n\in S$ with $|u_n|\ge U$ contribute at most $\eps(U)$ to $\sum a_n$ and to $\sum b_n$.

\emph{Bulk.} For $|u|\le U$ we have $a_n/b_n=L^{\alpha}g(u)/((\log n)^{\alpha}\sigma_t'(u)g(\sigma_t(u)))$ with $u=u_n$. As $t\to0^+$, uniformly for $|u|\le U$: $L/\log n\to1$, $\sigma_t'(u)\to1$ and $\sigma_t(u)\to u$. Since $g$ is continuous and positive on compact sets, $a_n/b_n\to1$ uniformly. By~\eqref{eq:Nj} the sum $\sum_{|u_n|\le U}a_n$ is bounded, so the bulk contributes $o(1)$ to $\sum(a_n-b_n)$. Hence $\limsup_{t\to0^+}|L^{\alpha}F_S(e^{-t})-I_S(w(t))|\le2\eps(U)$ for every $U$.
\end{proof}

The next lemma is a form of the Wiener--Pitt Tauberian theorem for nonnegative measures (cf.~\cite{Korevaar}); we include a short proof based on Theorem~\ref{thm:wiener}.

\begin{lemma}\label{lem:measureTauber}
Let $\mu\ge0$ be a Borel measure on $[0,\infty)$ with $M=\sup_{w\ge0}\mu((w,w+1])+\mu(\{0\})<\infty$, and let $\delta\ge0$. Then $J(w)=\int g(v-w)\dd\mu(v)\to\delta$ as $w\to\infty$ if and only if $\mu((w,w+c])\to c\delta$ for every $c>0$.
\end{lemma}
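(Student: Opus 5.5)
The forward (Abelian) direction will mirror Lemma~\ref{lem:abelstep}, with the measure $\mu$ in place of $\Phi\,dv$. Assume $\mu((w,w+c])\to c\delta$ for every $c>0$. For a step function $k=\mathbf 1_{(a,b]}$ we have $\int k(v-w)\dd\mu(v)=\mu((w+a,w+b])\to(b-a)\delta$ once $w+a>0$; by linearity the same holds for step functions with finitely many steps. To pass from step functions to $g$, use the uniform local bound $\mu((y,y+1])\le M$. Given $\eps>0$, choose $U$ so that the tails $\int_{|u|>U}g$ are controlled through $\sum_{|j|>U}\bar g_j$, where $\bar g_j=\max_{[j,j+1]}g$. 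Then approximate $g$ on $[-U,U]$ from above and below by step functions on a fine mesh, using uniform continuity. The error is at most $M\cdot(2U+1)\cdot\text{mesh oscillation}+2M\sum_{|j|>U}\bar g_j$, which is small. This gives $J(w)\to\delta\int g=\delta$.

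For the converse (Tauberian) direction, the plan is to smooth $\mu$ into a bounded function so that Theorem~\ref{thm:wiener} applies. Fix $h>0$ and set $\Phi_h(v)=h^{-1}\mu((v,v+h])$ for $v\ge0$, and $\Phi_h(v)=\delta$ for $v<0$. This function is bounded by $M(1+1/h)$. By Fubini,
\[
\int g(v-w)\Phi_h(v)\dd v=\int\Big(h^{-1}\int_{y-h}^{y}g(v-w)\dd v\Big)\dd\mu(y)+(\text{boundary terms from } y<h \text{ and } v<0).
\]
The inner average differs from $g(y-w)$ by at most $h\max|g'|$ times a kernel with summable local maxima. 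So $(K*\Phi_h)(w)=J(w)+O(hM)+o(1)$ as $w\to+\infty$, with $K(z)=g(-z)$. As $w\to-\infty$ the convolution tends to $\delta$, exactly as in the proof of Theorem~\ref{thm:main}. An $O(h)$ error prevents a direct application of Wiener. Two remedies are available: replace $g$ by the kernel $K_h=K*h^{-1}\mathbf 1_{[0,h]}$ (with the appropriate reflection), or better, smooth the measure with a kernel convolved exactly.

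The cleaner route is to define $\Phi_h=\mu*\rho_h$ with $\rho_h=h^{-1}\mathbf 1_{[-h,0]}$, so that $K*\Phi_h=(K*\rho_h)*\mu$ exactly. The kernel $K*\rho_h$ has Fourier transform $\Gamma(1-i\xi)\widehat{\rho_h}(\xi)$, which vanishes at $\xi=2\pi k/h$, so it does not satisfy Wiener's hypothesis. The way around this is to keep $K$ but apply Wiener to $\Phi_h$ itself: $K*\Phi_h=\rho_h*(K*\mu)=\rho_h*J$, up to boundary terms. Since $J\to\delta$, the average $\rho_h*J\to\delta$ as well. So the hypothesis of Theorem~\ref{thm:wiener} holds for $K$ and $\Phi_h$, with $\widehat K(0)=1$. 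Wiener then gives $\int_w^{w+c}\Phi_h\to c\delta$ for every $c$.

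Finally, unsmooth using positivity. We have $\int_w^{w+c}\Phi_h=h^{-1}\int_w^{w+c}\mu((v,v+h])\dd v$. Since $\mu\ge0$ and each interval is monotone in its endpoints, this quantity is sandwiched:
\[
\mu((w+h,w+c])\le\int_w^{w+c}\Phi_h\le\mu((w,w+c+h]).
\]
Applying both bounds with $c$ replaced by $c\pm h$ and shifted $w$ gives
\[
\limsup\mu((w,w+c])\le(c+h)\delta,\qquad\liminf\mu((w,w+c])\ge(c-h)\delta.
\]
Letting $h\to0$ finishes the proof.

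The main obstacle is the boundary bookkeeping at $v=0$, including the atom $\mu(\{0\})$, which is why it appears in $M$. One must check that extending by $\delta$ on the negative axis keeps $K*\Phi_h\to\delta$ as $w\to-\infty$, and that the identity $K*\Phi_h=\rho_h*J$ holds up to terms tending to $0$. Each of these reduces to the decay of $g$ and the bound $M$.
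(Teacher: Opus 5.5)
Your proposal is correct and follows essentially the paper's route. The Abelian direction uses step-function approximation together with the tail bound $2M\sum_{|j|\ge U-1}\bar g_j$. For the converse you form a box-averaged density of $\mu$, extend it by $\delta$ on $(-\infty,0)$, observe that its convolution with $K(z)=g(-z)$ is an average of $J$, and then apply Wiener with $k=\mathbf 1_{[-c,0]}$ followed by a positivity sandwich.

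The differences are cosmetic. The paper uses the backward window $\eta^{-1}\mu((v-\eta,v])$ on all of $\R$, which makes the Fubini identity $(K*\widetilde\Phi)(w)=\eta^{-1}\int_0^\eta J(w-s)\dd s+\delta\,\mathcal G(-w)$ exact, with no boundary terms near $0$. Your detour through the $O(h)$ error and the kernel $K*\rho_h$ is not needed.
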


\begin{proof}
For every $w$, the contribution of $|v-w|\ge U$ to $J(w)$ is at most $2M\sum_{|j|\ge U-1}\bar g_j$, which is small for large $U$.

Suppose first that the windows converge. Fix $U$ and $\eps>0$, and choose step functions $s_\pm$ supported in $[-U,U]$ with $s_-\le g\le s_+$ there and $\int(s_+-s_-)<\eps$. For a step function $s$, $\int s(v-w)\dd\mu(v)$ is a finite combination of window masses, so it tends to $\delta\int s$. Together with the tail bound, this gives $J(w)\to\delta$.

Conversely, suppose $J(w)\to\delta$ and let $0<\eta<1$. Put $\phi(v)=\eta^{-1}\mu((v-\eta,v])$ for $v\in\R$; then $0\le\phi\le M/\eta$. Let $\widetilde\Phi=\phi+\delta\mathbf 1_{(-\infty,0)}$ and $K(z)=g(-z)$. By Fubini,
\[
(K*\widetilde\Phi)(w)=\frac1\eta\int_0^\eta J(w-s)\dd s+\delta\,\mathcal G(-w).
\]
This tends to $\delta$ as $w\to+\infty$. It also tends to $\delta$ as $w\to-\infty$, because $g(z)\le e^{-z}$ gives $J(w')\le M\sum_{j\ge0}e^{-(j+|w'|)}\to0$ as $w'\to-\infty$. By~\eqref{eq:fourier}, $\widehat K(\xi)=\Gamma(1-i\xi)\ne0$, and $\widehat K(0)=1$. Theorem~\ref{thm:wiener} with $k=\mathbf 1_{[-c,0]}$ gives $\int_w^{w+c}\phi\to c\delta$ for every $c>0$. Moreover
\[
\int_w^{w+c}\phi(v)\dd v=\int\varpi(y)\dd\mu(y),\qquad \varpi(y)=\eta^{-1}\big|[w,w+c]\cap[y,y+\eta)\big|\in[0,1],
\]
and $\varpi=1$ on $[w,w+c-\eta]$ while $\varpi=0$ outside $(w-\eta,w+c]$. Since $\mu\ge0$,
\[
\int_{w+\eta}^{w+c}\phi\le\mu((w,w+c])\le\int_w^{w+c+\eta}\phi .
\]
Hence all limit points of $\mu((w,w+c])$ lie in $[(c-\eta)\delta,(c+\eta)\delta]$, and $\eta\to0$ completes the proof.
\end{proof}

\emph{Completion of the proof.} Since $\log(1/(1-q))=L_t+O(t)$ for $q=e^{-t}$ and $0\le F_S\le1$, condition~(i) is equivalent to $L_t^{\alpha}F_S(e^{-t})\to\delta$.

Assume~(i). For small $t$, every $n\in S$ with $|tn-L_t|\le1$ satisfies $k_t(n)\ge t\,g(-1)=te^{1-e}$. By Lemma~\ref{lem:gumbel},
\[
\#\big(S\cap[x_t-1/t,\,x_t+1/t]\big)\le\frac{e^{e-1}(F_S(e^{-t})+8t)}{t}\le\frac{C_0}{tL_t^{\alpha}}\le\frac{2^{1+\alpha}C_0\,x_t}{(\log x_t)^{1+\alpha}},
\]
using $\log x_t=L_t+\log L_t\le2L_t$. Since $x_t/\log x_t\le1/t$ and $t\mapsto x_t$ maps $(0,t_0)$ onto $(x_{t_0},\infty)$ for a suitable small $t_0$, condition~\eqref{eq:B} follows, after enlarging $C$ to cover $3\le x\le x_{t_0}$. Lemma~\ref{lem:primesreduction} gives $I_S(w(t))\to\delta$, hence $I_S(w)\to\delta$ as $w\to\infty$. Lemmas~\ref{lem:muwindows}(a) and~\ref{lem:measureTauber} then give $\mu_S((w,w+c])\to c\delta$ for every $c>0$, and Lemma~\ref{lem:muwindows}(b) gives~(ii).

Conversely, (ii) with $c=1$ implies~\eqref{eq:B}. By Lemma~\ref{lem:muwindows}(b), $\mu_S((w,w+c])\to c\delta$ for every $c>0$. Lemma~\ref{lem:measureTauber} gives $I_S(w)\to\delta$, and Lemma~\ref{lem:primesreduction} gives~(i). \qed

\subsection{Proof of Corollaries~\ref{cor:primesuff} and~\ref{cor:chebotarev}}
Let $H=cx/\log x$. On $[x,x+H]$ we have $(\log y)^{-\alpha}=(\log x)^{-\alpha}(1+O(1/\log x))$, and the derivative of $y/(\log y)^{1+\alpha}$ is $O((\log x)^{-1-\alpha})$. Hence the hypothesis of Corollary~\ref{cor:primesuff} gives
\[
A_S(x+H)-A_S(x)=\frac{\delta H}{(\log x)^{\alpha}}+O\Big(\frac{x}{(\log x)^{2+\alpha}}\Big)+o\Big(\frac{x}{(\log x)^{1+\alpha}}\Big)=\frac{c\delta x}{(\log x)^{1+\alpha}}+o\Big(\frac{x}{(\log x)^{1+\alpha}}\Big),
\]
which is condition~(ii) of Theorem~\ref{thm:primes}.

For Corollary~\ref{cor:chebotarev}, the Chebotarev density theorem with the classical error term~\cite{LagariasOdlyzko} gives $A_{\mathbb P_C}(x)=\frac{|C|}{[K:\mathbb Q]}\mathrm{li}(x)+O_K(x\exp(-c_K\sqrt{\log x}))$ for some $c_K>0$, so Corollary~\ref{cor:primesuff} applies with $\alpha=1$, $\delta=|C|/[K:\mathbb Q]$ and $\delta'=0$. The case of arithmetic progressions is $K=\mathbb Q(e^{2\pi i/k})$, where the primes $p\equiv\ell\ (\mathrm{mod}\ k)$ with $p\nmid k$ are those with Frobenius class $\{\sigma_\ell\}$, where $\sigma_\ell(e^{2\pi i/k})=e^{2\pi i\ell/k}$, and $[K:\mathbb Q]=\varphi(k)$. Finitely many primes do not affect the hypothesis. By Lemma~\ref{lem:identities}, $F_S$ is the displayed partition series. \qed

\subsection{Proof of Lemma~\ref{lem:poch}}
Expanding the logarithm and summing over $k$,
\[
\log(aq;q)_\infty=-\sum_{r\ge1}\frac{a^r}{r}\,\frac{q^r}{1-q^r}=-\frac{\mathrm{Li}_2(a)}{\tau}-\sum_{r\ge1}\frac{a^r}{r}\,H(r\tau),\qquad H(z)=\frac1{e^z-1}-\frac1z .
\]
The function $H$ is holomorphic in $|\mathrm{Im}\,z|<2\pi$ with $H(0)=-\frac12$, and its only poles are at $2\pi i\Z\setminus\{0\}$. On the ray $\{s(1+i\beta):s\ge0\}$, which avoids these poles, we have $|e^z|=e^s$. Hence $|H(z)|\le C_\beta\min(1,|z|^{-1})$ and $|H'(z)|\le C_\beta\min(1,|z|^{-2})$ there. For~(a), the last sum is at most $C_\beta\sum_{r\ge1}r^{-1}\min(1,(r|\tau|)^{-1})\le C_\beta(3+\log(1/t))$.

For~(b), let $T_r=\sum_{k\ge r}a^k/k$, which converges and tends to $0$ because $a\ne1$. Put $u_r=H(r\tau)$. Summation by parts gives
\[
\sum_{r\ge1}\frac{a^r}ru_r=T_1u_1+\sum_{r\ge1}T_{r+1}(u_{r+1}-u_r).
\]
The total variation $\sum_r|u_{r+1}-u_r|$ is at most $\int_0^\infty|H'(s\tau)||\tau|\dd s\le V_\beta<\infty$, uniformly in $t$. Each difference $u_{r+1}-u_r$ tends to $0$, and $u_1\to H(0)$. Splitting the last sum at a large $R$ therefore shows that it tends to $0$. Hence $\sum_r a^ru_r/r\to-\frac12T_1=\frac12\mathrm{Log}(1-a)$, and
\[
(aq;q)_\infty=(1-a)^{-1/2}\exp(-\mathrm{Li}_2(a)/\tau)(1+o(1)).
\]
By~\eqref{eq:eta}, since $|e^{-4\pi^2/\tau}|=\exp(-4\pi^2/(t(1+\beta^2)))\to0$, we have
\[
(q;q)_\infty=(2\pi/\tau)^{1/2}\exp(-\pi^2/(6\tau))(1+o(1)).
\]
Dividing proves the first formula in~(b). For the second, by~\eqref{eq:Li2circle} and~\eqref{eq:kappa},
\[
-\frac{\pi^2/6-\mathrm{Li}_2(e^{i\vartheta})}{\tau}=\frac{-(\gamma(\vartheta)-i\,\mathrm{Cl}_2(\vartheta))(1-i\beta)}{t(1+\beta^2)}=\frac{\kappa_\beta(\vartheta)}{t}+i\,\frac{\psi_\beta(\vartheta)}{t},
\]
which gives the modulus. \qed

\subsection{Proof of Theorem~\ref{thm:periodic}}
For $|q|<1$ and $|a|\le1$, Euler's identity with $z=aq$ gives $\sum_{n\ge0}a^nq^n(q;q)_\infty/(q;q)_n=(q;q)_\infty/(aq;q)_\infty$, that is,
\begin{equation}\label{eq:euler-weights}
\sum_{n\ge1}a^n\omega_n(q)=\frac{(q;q)_\infty}{(aq;q)_\infty}-(q;q)_\infty .
\end{equation}
Write $\zeta=e^{2\pi i/m}$ and $f(n)=\sum_{j=0}^{m-1}\widehat f(j)\zeta^{jn}$. By~\eqref{eq:euler-weights},
\[
\cA_f(q)=\widehat f(0)\big(1-(q;q)_\infty\big)+\sum_{j=1}^{m-1}\widehat f(j)\big(R_j(q)-(q;q)_\infty\big),\qquad R_j(q)=\frac{(q;q)_\infty}{(\zeta^jq;q)_\infty}.
\]
By~\eqref{eq:eta}, $(q_t;q_t)_\infty\to0$. By Lemma~\ref{lem:poch}(b) with $\vartheta_j=2\pi j/m$,
\[
R_j(q_t)=c_j\,t^{-1/2}\exp\Big(\frac{\kappa_\beta(\vartheta_j)+i\psi_\beta(\vartheta_j)}{t}\Big)(1+o(1)),\qquad c_j\ne0 .
\]
If $\kappa^*<0$, every term with $\widehat f(j)\ne0$ tends to $0$, which proves~(a).

Let $\kappa^*\ge0$ and let $\mathcal J$ be the set of $j$ with $\widehat f(j)\ne0$ and $\kappa_\beta(\vartheta_j)=\kappa^*$. All other terms, including the bounded term with $j=0$, are $o(t^{-1/2}e^{\kappa^*/t})$. Hence
\[
t^{1/2}e^{-\kappa^*/t}\cA_f(q_t)=\sum_{j\in\mathcal J}\widehat f(j)c_j\,e^{i\psi_\beta(\vartheta_j)/t}+o(1).
\]
The frequencies $\psi_\beta(\vartheta_j)$, $j\in\mathcal J$, are distinct. Indeed, equal values of $\kappa_\beta$ and $\psi_\beta$ at $\vartheta_j$ and $\vartheta_k$ give $(1+\beta^2)(\gamma(\vartheta_j)-\gamma(\vartheta_k))=0$ and then $\mathrm{Cl}_2(\vartheta_j)=\mathrm{Cl}_2(\vartheta_k)$. Since $\gamma$ is injective on $[0,\pi]$ and symmetric about $\pi$, this forces $\vartheta_k\in\{\vartheta_j,2\pi-\vartheta_j\}$, and $\mathrm{Cl}_2(2\pi-\vartheta)=-\mathrm{Cl}_2(\vartheta)$ with~\eqref{eq:clausen} gives $\vartheta_k=\vartheta_j$. For a trigonometric sum $P(s)=\sum_jb_je^{i\psi_js}$ with distinct real $\psi_j$, the mean of $|P(s)|^2$ over $s\in[s_0,s_0+X]$ tends to $\sum_j|b_j|^2$ as $X\to\infty$. Hence $\limsup_{s\to\infty}|P(s)|^2\ge\sum_j|b_j|^2>0$, and~(b) follows with $s=1/t$. \qed

\subsection{Proof of Corollary~\ref{cor:AP}}
For $f=\mathbf 1_{S_{r,m}}$ one has $\widehat f(j)=m^{-1}e^{-2\pi ijr/m}\ne0$ for all $j$. Since $\gamma(2\pi-\vartheta)=\gamma(\vartheta)$ and $\mathrm{Cl}_2(2\pi-\vartheta)=-\mathrm{Cl}_2(\vartheta)$, the pairs $j$ and $m-j$ contribute $(\pm\beta\,\mathrm{Cl}_2(\vartheta_j)-\gamma(\vartheta_j))/(1+\beta^2)$. The index $j=m/2$, if present, contributes $-\gamma(\pi)/(1+\beta^2)<0$. By~\eqref{eq:clausen}, $\kappa^*<0$ if and only if $|\beta|\,\mathrm{Cl}_2(\vartheta_j)<\gamma(\vartheta_j)$ for all $1\le j<m/2$, that is, $|\beta|<\beta_m$, and Theorem~\ref{thm:periodic} applies. For $m=1$ there is no nonzero frequency, and for $m=2$ the only one is $\vartheta=\pi$, where $\mathrm{Cl}_2(\pi)=0$. Since $2\sin(x/2)\le x$,~\eqref{eq:clausen} gives $\mathrm{Cl}_2(\vartheta)\ge\vartheta(1+\log(1/\vartheta))$, while $\gamma(\vartheta)\le\pi\vartheta/2$; with $\vartheta=2\pi/m$ this gives the bound for $\beta_m$. For $m=3$ and $m=4$ only $j=1$ occurs, with $\gamma(2\pi/3)=2\pi^2/9$, $\gamma(\pi/2)=3\pi^2/16$ and $\mathrm{Cl}_2(\pi/2)=G$. For $m=6$ we have $\gamma(\pi/3)=5\pi^2/36$, and the value $1.3506\ldots$ at $j=1$ is smaller than the value $\beta_3=3.2414\ldots$ at $j=2$. \qed

\subsection{Proof of Theorem~\ref{thm:instability}}
Since $\omega_n(\overline q)=\overline{\omega_n(q)}$ and $q_t(-\beta)=\overline{q_t(\beta)}$, we may assume $\beta>0$.

(a) Write $z=e^{-s(1+i\beta)}$, so that $|\mathrm{Li}_2(z)-z|\le|z|^2$. Since $1-i\beta=(1+\beta^2)^{1/2}e^{-i\arctan\beta}$,
\[
\Lambda_\beta(s)\ge-\frac{e^{-s}\cos(\beta s+\arctan\beta)}{(1+\beta^2)^{1/2}}-\frac{e^{-2s}}{(1+\beta^2)^{1/2}},
\]
and $s=s_\beta$ makes the cosine equal to $-1$. Thus $\Psi_\beta>0$. Also $\Psi_\beta\le\pi^2/(6(1+\beta^2)^{1/2})$, $\Lambda_\beta(0)<0$ and $\Lambda_\beta(s)\to0$ as $s\to\infty$, so the supremum is attained at some $s^*>0$. Apply Lemma~\ref{lem:poch}(a) with $a=q^n$, noting $(q^{n+1};q)_\infty=(aq;q)_\infty$ and $\mathrm{Li}_2(q_t^n)/\tau=\mathrm{Li}_2(e^{-nt(1+i\beta)})/(t(1+i\beta))$. This gives, uniformly in $n\ge1$,
\begin{equation}\label{eq:logomega}
\log|\omega_n(q_t)|=-nt+\frac{\Lambda_\beta(nt)}{t}+O_\beta(\log(2/t)).
\end{equation}
The upper bound in~(a) follows since $-nt\le 0$ and $\Lambda_\beta\le\Psi_\beta$. For the lower bound take $n=\lceil s^*/t\rceil$. Then $|nt-s^*|\le t$, and $\Lambda_\beta$ is Lipschitz near $s^*$, so $\Lambda_\beta(nt)\ge\Psi_\beta-O(t)$.

(b) Let $\eps_k=+1$ if $2k\in S$ and $\eps_k=-1$ otherwise. Every block $\{2k-1,2k\}$ contains exactly one element of $S$, so $|A_S(x)-x/2|\le1$, and $F_S(q)\to\frac12$ as $q\to1^-$ by Corollary~\ref{cor:consequences}(a). By~\eqref{eq:mass},
\[
F_S(q)=\tfrac12\big(1-(q;q)_\infty\big)+\sum_{k\ge1}\eps_k\Delta_k(q),\qquad \Delta_k(q)=\tfrac12\big(\omega_{2k}(q)-\omega_{2k-1}(q)\big).
\]
From $\omega_{n-1}=q^{n-1}(1-q^n)(q^{n+1};q)_\infty$ we get $\omega_n-\omega_{n-1}=\omega_n\,(q-1+q^n)/q$. Moreover $|q_t-1|\le|\tau|$ and $|q_t^n|=e^{-nt}$. Fix $0<\eta<\Psi_\beta/2$ and $0<r_0<s^*$ with $\Lambda_\beta\ge\Psi_\beta-\eta$ on $[s^*-r_0,s^*+r_0]$. By~\eqref{eq:logomega}, for all $n$ with $|nt-s^*|\le r_0$ and all small $t$,
\[
|\omega_n(q_t)-\omega_{n-1}(q_t)|\ge\tfrac12e^{-s^*-r_0}|\omega_n(q_t)|\ge\exp\Big(\frac{\Psi_\beta-2\eta}{t}\Big).
\]
Let $M>0$ and $R>(M+1)/\cos(\pi/8)$. Let $V_t$ be the set of $k$ with $|\Delta_k(q_t)|\ge R$; then $|V_t|\ge r_0/(2t)-1$ for small $t$. Each $z\in\C$ with $|z|\ge R$ satisfies $|\mathrm{Re}(e^{-i\phi}z)|\ge R\cos(\pi/8)$ for some $\phi\in\{0,\pi/4,\pi/2,3\pi/4\}$. Choose $\phi$ serving a set $V'_t\subseteq V_t$ with $|V'_t|\ge|V_t|/4$. If $|F_S(q_t)|\le M$, then $\mathrm{Re}\big(e^{-i\phi}\sum_{k\in V'_t}\eps_k\Delta_k\big)$ lies in an interval of length $2(M+1)$, whose position is determined by the $\eps_k$ with $k\notin V'_t$. Conditioning on these and applying the Erd\H{o}s--Littlewood--Offord inequality to the numbers $\mathrm{Re}(e^{-i\phi}\Delta_k)/(R\cos(\pi/8))$, $k\in V'_t$, gives
\[
\Pr\big(|F_S(q_t)|\le M\big)\le|V'_t|^{-1/2}\longrightarrow0\qquad(t\to0^+).
\]
Since $t\mapsto F_S(q_t)$ is continuous, the event $\{\sup_{0<t<t_0}|F_S(q_t)|\le M\}$ is measurable. Its probability is at most $\Pr(|F_S(q_t)|\le M)$ for every $t<t_0$, hence $0$. Taking the union over $M,t_0^{-1}\in\N$ and over $\beta$ in the prescribed countable set proves~(b). \qed

\subsection{Proof of Corollary~\ref{cor:stolz}}
Given $M_0>1$, choose $\beta\ne0$ with $(1+\beta^2)^{1/2}<M_0$. The path $q_t(\beta)$ lies eventually in the sector. By Corollary~\ref{cor:AP}, every $m\ge3$ with $\pi/(2(1+\log(m/2\pi)))\le|\beta|$ gives residue classes with unbounded $F_{S_{r,m}}(q_t)$, and Theorem~\ref{thm:instability}(b) gives the random sets. Periodic functions and sets of bounded discrepancy have a mean value. Lemma~2.5 of~\cite{OSW1} asserts that $(q;q)_\infty/(\zeta^aq;q)_\infty\to0$ as $q\to1$ from within the unit disk whenever $\zeta$ is a primitive $m$-th root of unity and $m\nmid a$; for $\zeta=e^{2\pi i/m}$, Lemma~\ref{lem:poch}(b) shows that this quotient is unbounded along $q_t(\beta)$ whenever $\kappa_\beta(2\pi a/m)\ge0$. \qed

\section{Conclusion}

On the real axis and for bounded $f$, $\lim_{q\to1^-}\cA_f(q)=L$ holds exactly when $f$ has mean $L$ on all intervals $(x,x+cx/\log x]$, because the generating function averages $f$ against the Gumbel law of the largest part. With the normalization $(\log(1/(1-q)))^{\alpha}$, the same mechanism, now with positivity as the Tauberian condition, characterizes sets of local density $(\log x)^{-\alpha}$, and in particular the prime analogue of Alladi's formula. Off the real axis the largest weights are exponentially large, and the formula survives only through cancellation. For periodic functions this cancellation is governed by the Clausen function, and for random sets of bounded discrepancy it fails almost surely. The following problems remain open.

\begin{enumerate}
\item \emph{Non-radial limits.} For given $\beta\ne0$, characterize the bounded $f$ for which $\cA_f(q_t(\beta))$ converges. Theorem~\ref{thm:periodic} settles the periodic case, and Theorem~\ref{thm:instability} shows that bounded discrepancy is far from sufficient. Is there a $\{0,1\}$-valued $f$, not eventually periodic, for which $\cA_f(q_t(\beta))$ converges for some $\beta\ne0$?
\item \emph{The optimal constant.} Let $C^*$ be the least constant such that $\limsup_{q\to1^-}|\cA_f(q)-d|\le C^*\limsup_{x\to\infty}|E(x)|\log x/x$ for all bounded $f$ and all $d\in\C$, with $E$ as in Theorem~\ref{thm:abelian}. Theorem~\ref{thm:abelian} gives $C^*\le2/e\approx0.736$. By~\eqref{eq:AS-osc} and~\eqref{eq:FS-osc}, $C^*\ge\sup_{\theta>0}\theta\,(\pi\theta/\sinh\pi\theta)^{1/2}\approx0.523$. Determine $C^*$.
\item \emph{Roots of unity and other statistics.} In~\cite{OSW1} limits are also taken as $q$ tends to roots of unity $\zeta$. Characterize the functions $f$ for which $\cA_f(q)\to L$ as $q\to\zeta$ radially. More generally, determine the window scale and limiting kernel for density formulas built from other partition statistics, and decide in which cases the kernel's Fourier transform vanishes somewhere, which would make the corresponding Tauberian equivalence fail.
\end{enumerate}


\end{document}